\crefname{equation}{}{}
\Crefname{equation}{}{}
\newtheoremstyle{mythmstyle}
  {8 pt} % Space above
  {3 pt} % Space below
  {} % Body font
  {} % Indent amount
  {\bfseries} % Theorem head font
  {.} % Punctuation after theorem head
  {.5em} % Space after theorem head
  {} % Theorem head spec (can be left empty, meaning `normal')
\theoremstyle{plain}
\def\thm@space@setup{%
  \thm@preskip=6pt plus 1pt minus 1pt
  \thm@postskip=\thm@preskip % or whatever, if you don't want them to be equal
}
\newtheorem{theorem}{Theorem}[section]
\newtheorem{lemma}[theorem]{Lemma}
\newtheorem{corollary}[theorem]{Corollary}
\newtheorem{proposition}[theorem]{Proposition}
\newtheorem{remark}{Remark}
\newtheorem{example}{Example}
\newtheorem*{example*}{Example}
\newtheorem{definition}{Definition}
\newtheorem*{definition*}{Definition}
\newtheorem{assumption}{Assumption}
\newtheorem*{remark*}{Remark}
\crefname{definition}{\textbf{definition}}{definitions}
\Crefname{definition}{Definition}{Definitions}
\crefname{assumption}{\textbf{assumption}}{assumptions}
\Crefname{assumption}{Assumption}{Assumptions}
\newcommand{\lstability}{\epsilon_\ell}
\newcommand{\rstability}{\epsilon_{R}}
\DeclareMathOperator*{\argmin}{arg\,min}
\begin{document}
\allowdisplaybreaks
\title{Black-Box Model Confidence Sets Using Cross-Validation with High-Dimensional Gaussian Comparison}

 \author[1]{Nicholas Kissel}
\affil[1]{Carnegie Mellon University}

 \author[1]{Jing Lei}
%\affil[1]{CMU}

\maketitle

\begin{abstract}
  We derive high-dimensional Gaussian comparison results for the standard $V$-fold cross-validated risk estimates.
 Our results combine a recent stability-based argument for the low-dimensional central limit theorem of cross-validation with the  high-dimensional Gaussian comparison framework for sums of independent random variables.  These results give new insights into the joint sampling distribution of cross-validated risks in the context of model comparison and tuning parameter selection, where the number of candidate models and tuning parameters can be larger than the fitting sample size. As a consequence, our results provide theoretical support for a recent methodological development that constructs model confidence sets using cross-validation.
\end{abstract}

\section{Introduction}
%!TEX root = ./hd_cv_clt.tex
Cross-validation \citep{Stone74,Allen74,Geisser75} is among the most popular procedures for estimating the out-of-sample predictive performance of statistical models fitted on data sets randomly sampled from a population. Generally speaking, cross-validation estimates the out-of-sample prediction accuracy by fitting and assessing a fitted model on separate subsets of data. One of the most common forms of cross-validation is $V$-fold cross-validation, where data are partitioned into $V$ folds (sets) of identical size; then, each fold is used to assess the error of the model fitted using the other $V-1$ folds. Finally, the average of all $V$ estimates is used to create the cross-validation risk estimate.

Cross-validation is commonly used in statistical learning problems wherein researchers either compare the cross-validated risk of multiple models or compare a cross-validated risk against some baseline method with known risk. See \cite{picard1984cross,arlot2010survey} for examples.  The popularity and simplicity of cross-validation has inspired  numerous research articles seeking to better understand its theoretical properties. In particular, positive results have been established for parameter estimation following the model selected by cross-validation, including risk consistency and parameter estimation consistency. See \cite{stone1977asymptotics,homrighausen2017risk,chetverikov2016cross,celisse2014optimal} for various examples from linear regression to nonparametric density estimation problems. 

%In this paper we consider the precise uncertainty quantification of cross validated risk estimates.  
Despite the consistency results established for parameter estimation, understanding the model selection properties of cross-validation has been a challenging task, with most existing results being negative.  In the early work of \cite{stone1977asymptotic}, it is shown that cross-validation is similar to AIC, and hence prone to choosing overfitted linear regression models. Such an overfitting tendency of cross-validation has been further studied in \cite{Shao93,Zhang93,yang2007consistency}, which show that in the classical regime, cross-validation often produces inconsistent model selection unless a very unrealistic train-validate ratio is used. Indeed, these ratios are so extreme that they can never be satisfied by standard V-fold cross-validation.  Furthermore, the unsatisfactory model selection performance of cross-validation has been widely observed in practice, and many heuristic or context-specific adjustments have been proposed, such as \cite{efron1997improvements,tibshirani2009bias,yu2014modified}.

The model selection inconsistency of cross-validation can be understood as an instance of the ``winner's curse.'' Since the cross-validated risk of each model is still a random variable, a particular model may have the smallest cross-validated risk because its realized random fluctuation happens to be small while the true optimal model has a much larger fluctuation.  Such an intuition calls for a more precise understanding of the sampling distribution of cross-validated risks.  A main challenge in studying the sampling distribution of cross-validated risk is the global and heterogeneous dependence among each individual empirical loss function.  \cite{bousquet2002stability} proved convergence of cross-validated risk to the corresponding population quantity under an expected leave-one-out loss stability condition.  The population target of cross-validated risk and its variability is further studied in \cite{bates2021cross}.

In this work, we study the simultaneous fluctuations of the cross-validated risks of many models around their mean values. In particular, we establish high-dimensional Gaussian comparison results for the cross-validated risk vector indexed by a collection of models, whose cardinality can potentially be very large.   Our main contributions are two fold. First, we extend the low-dimensional central limit theorem by \cite{austern2020asymptotics} to the high-dimensional case, combining their cross-validation error analysis with the high-dimensional Gaussian comparison framework by \cite{chernozhukov2013gaussian}.  Second, we provide theoretically justifiable model selection confidence sets using cross-validation, answering an open question left in the methodological work \cite{lei2020cross}.
%We will also briefly discuss possible ways to use the Gaussian comparison result to construct confidence sets in cross-validation based risk estimation and model selection.

  % For many years, these kinds of findings did not come equipped with any straight-forward notion of uncertainty. That is, there were not valid inferential procedures for comparing a cross validated risk against some null hypothesis. Establishing conditions for valid inference of cross-validation is admittedly difficult since there's a great deal of dependence between each fitted model. For this reason, the needs of applied researchers have outpaced the ability to create such tools and instead model-class specific procedures were created \cite{something}. Recently however, there has been much work in the way of establishing central limit theorems for the cross-validated risk \citep{austern2020asymptotics, bayle2020cross}. 

Our theoretical development extends and merges two lines of current research: central limit theorems for cross-validation and high-dimensional Gaussian comparisons.  Low-dimensional central limit theorems for cross-validation have been developed recently by \cite{austern2020asymptotics} and \cite{bayle2020cross}.  While these low-dimensional results provide very useful insights for the estimation of prediction risks of individual models, they cannot be used to construct simultaneous confidence sets when many candidate models are being compared.  This is of particular interest because in practice, cross-validation is often used to compare and select from a large collection of models or tuning parameters. Therefore, in order to understand the behavior of cross-validation in selecting from many models, it is necessary to consider the joint sampling distribution of the cross-validated risks.  In the case of sums of independent random vectors, high-dimensional Gaussian comparison has been developed in the milestone work of \cite{chernozhukov2013gaussian} \citep[see also][]{bentkus2005lyapunov}.  Since then, similar results have been developed for $U$-statistics \citep{chen2018gaussian} and stochastic processes with weak dependence such as mixing or spatial process \citep{kurisu2021gaussian,chang2021central}.  However, these extensions do not cover the cross-validation case, where all terms in the summation are dependent on each other and have similar magnitudes of dependence, violating the sparsity of dependence ($U$-statistics) and fast decaying dependence (mixing and spatial processes).  In fact, a different extension of the Gaussian comparison result is needed for cross-validation, which borrows the martingale decomposition and stability conditions in \cite{austern2020asymptotics}.

Stability conditions play a key role in developing the Gaussian comparison results in this work.  Outside of the analysis of cross-validation, the importance of stability has been studied in the broader statistics and machine learning communities \citep{yu2013stability,meinshausen2010stability,bousquet2002stability,hardt2016train}.  To develop our results, we require more subtle versions of stability than in the existing literature, including second order stability, stability in sub-Weibull tails, and stability of difference loss functions.  We provide rigorous justifications of these stability conditions for the stochastic gradient descent algorithm and a prototypical non-parametric regression setting.

% To add to this body of literature, we adjust the work of \cite{austern2020asymptotics} by providing additional tail conditions and making minor corrections. We further extend these results using high-dimensional central limit theorem techniques borrowed from \cite{chernozhukov2015comparison}. This allows us to construct model confidence intervals and uniform confidence bands for cross-validated risk.

\section{Preliminaries}
%!TEX root = ./hd_cv_clt.tex
Consider iid data $\mathbf X = (X_0,X_1,...,X_n)$ with $X_i\in \mathcal X$. We would like to simultaneously study the performance of $p$ learning algorithms through the framework of $V$-fold cross-validation.

For notation simplicity, we assume $V$ evenly divides $n$. 
For $v\in[V]$, let $I_v=\{n(v-1)/V+1,...,n v/V\}$ be the index that corresponds to the $v$th fold of data. Let $\tilde n = n(1-1/V)$ be the training sample size used in $V$-fold cross-validation.
For each $r\in [p]$, let $\ell_r(\cdot;\cdot):\mathcal X\times \mathcal X^{\tilde n}\mapsto \mathbb R$ be a loss function. Intuitively, we should think $\ell_r(x_0;(x_1,...,x_{\tilde n}))$ as the loss function evaluated at $x_0$ of a fitted model using training data $(x_1,...,x_{\tilde n})$. Here the index $r$ denotes a particular model or tuning parameter value. This notation covers both supervised and unsupervised learning.
\begin{enumerate}
 	\item In supervised learning, each point can be thought of as $x=(y,z)\in \mathcal Y\times\mathcal Z$ where $z$ is a vector of covariates and $y$ is the response variable. The loss function can often be written more concretely as 
 	$$\ell_r(x_0;(x_1,...,x_{\tilde n}))=\rho(y_0, \hat f_r(z_0))\,,$$
 	where $\hat f_r(\cdot):\mathcal Z\mapsto \mathcal Y$ is a regression function that predicts $y$ from $z$, trained using the $r$th model/tuning parameter with input data $(x_1,...,x_{\tilde n})$, and
 	 $\rho(\cdot,\cdot):\mathcal Y^2\mapsto\mathbb R$ is a loss function measuring the quality of predicting $y$ using $\hat f_r(z)$, such as squared loss, $0$-$1$ loss, and hinge loss.
 	 \item In unsupervised learning,
 	 $$\ell_r(x_0;(x_1,...,x_{\tilde n}))=\rho(x_0;\hat f_r)\,,$$
 	 where $\hat f_r$ is a function describing the distribution of $X$  trained from the $r$th model/tuning parameter with the input data $(x_1,...,x_{\tilde n})$, and the function $\rho$ is a loss function assessing the agreement of the sample point $x_0$ and the fitted probability model $\hat f_r$.  Examples of $\rho$ include the negative likelihood in density estimation and the proportion of total variance explained in dimension reduction.
 \end{enumerate} 
In model selection and parameter tuning, a particularly interesting scenario is when the number of models being compared is large. 

For each $r$, the $V$-fold cross-validated risk is
\begin{equation}
	\hat R_{{\rm cv}, r} = n^{-1}\sum_{i=1}^n \ell_r(X_i; \mathbf X_{-v_i})\,.
\end{equation}
where $\mathbf X_{-v}$ denotes the sub-vector of $\mathbf X$ excluding the $v$th fold, and $v_i\in[V]$ is such that $i\in I_{v_i}$.

It is natural to expect $\hat R_{{\rm cv}, r}$ to approximate the true average risk of the fitted model:
$$
\tilde R_r=\frac{1}{V}\sum_{v=1}^V R_r(\mathbf X_{-v})\,,
$$
where $$\tilde R_r(\mathbf X_{-v})=\mathbb E\left[\ell_r(X_0;\mathbf X_{-v})|\mathbf X_{-v}\right]\,,$$
is the true risk of the $r$th model fitted using input data $\mathbf X_{-v}$.

The quantity $\tilde R_r(\mathbf X_{-v})$ still depends on the input data $\mathbf X_{-v}$ and hence is a random variable itself.
It would be natural to consider its expected value
$$
R_r^* = \mathbb E \tilde R_r(\mathbf X_{-v}) = \mathbb E\ell_r (X_0;\mathbf X_{-1})\,.
$$

As a statistical inference task, model comparison also involves uncertainty quantification of risk point estimates, and one would hope to establish central limit theorems of the form
$$
\frac{\sqrt{n}(\hat R_{{\rm cv},r}-\mu_r)}{\sigma_r}\rightsquigarrow N(0,1)
$$
with $\mu_r$ being either $\tilde R_r$ or $R_r^*$ and some appropriate scaling $\sigma_r$ \citep{austern2020asymptotics,bayle2020cross}.

In the context of model comparison or tuning parameter selection, these individual normal approximations would have limited practical use.  For example, in our numerical example in \Cref{sec:conf_band}, individual confidence intervals fail to simultaneously cover the targets when when $p$ is moderately large. To cover this gap between theory and practice, we seek to establish a high-dimensional Gaussian approximation in a similar fashion as in \cite{chernozhukov2013gaussian}:
\begin{equation}\label{eq:hd_gaussian_approx}
\sup_{z\in\mathbb R} \left|\mathbb P\left(\max_{1\le r\le p}\sqrt{n}(\hat R_{{\rm cv},r}-\mu_r)\le z\right)-\mathbb P\left(\max_{1\le r\le p} Y_r \le z\right)\right|\rightarrow 0
\end{equation}
for some centered Gaussian random vector $\mathbf Y=(Y_1,...,Y_p)$ with matching covariance.

\section{Main results}
%!TEX root = ./hd_cv_clt.tex
In this section, we establish a high-dimensional Gaussian approximation result with random centering. In particular, we prove \eqref{eq:hd_gaussian_approx} with $\mu_r=\tilde R_r$.  In the following subsections, we present and discuss the assumptions required for this result and provide its full statement as a theorem in \Cref{sec:main_thm}.

\subsection{Symmetry and moment conditions on the loss function}
The idea of cross-validation relies on independence and symmetry among data points. 
We consider the following symmetry and moment conditions on the loss functions $\ell_r$.
\begin{assumption}[Symmetry and moment condition on $\ell_r$]\label{ass:condition_l}
	For each $r\in[p]$, the loss function $\ell_r(\cdot;\cdot)$ satisfies
	\begin{enumerate}
		\item [(a)] $\ell_r(x_0;x_1,...,x_{\tilde n})$ is symmetric in $(x_1,...,x_{\tilde n})$.
%		\item [(b)] $\|\ell_r(X_1;\mathbf X_{-1})\|_4\le 1$.
		\item [(b)] $\mathbb E\left\{[\ell_r(X_1;\mathbf X_{-1})-R_r(\mathbf X_{-1})]^2\right\} \ge \underline{\sigma}^2$ for some constant $0<\underline{\sigma}$. 
	\end{enumerate}
\end{assumption}

Part (b) essentially assumes that the randomly centered cross-validated loss function has non-degenerate conditional variance.  This makes intuitive sense, as we would expect the resulting confidence interval to have length at the scale of $1/\sqrt{n}$.   For example, if $\ell_r$ is a regression residual, then this lower bound is at least as large as the prediction risk of the ideal regression function. In the additional assumptions below, we will also have the upper bound on the variance term. 

\subsection{Stability and tail conditions}
A key consideration from \cite{austern2020asymptotics} in their low dimensional central limit theorems for cross-validation is the stability of the loss function and the average risk when one input sample point is replaced by an iid copy.
In the high dimensional case, we need the loss function to be stable in a uniform sense across all $p$ candidate models indexed by $r\in[p]$. Thus, we will consider stability conditions in the form of stronger tail inequalities instead of the moment conditions used for the low dimensional case.  Such stronger tail conditions are common in high dimensional central limit theorem literature, such as in \cite{chernozhukov2013gaussian}.

We use sub-Weibull concentration to describe the required tail behaviors of random variables.
\begin{definition}[Sub-Weibull Random Variables]
Let $K$ be a positive number, we say a random variable $X$ is $K$-sub-Weibull ($K$-SW for short) if there are positive constants $(a,b,\alpha)$ such that
$$\mathbb P\left(\frac{|X|}{K}\ge t\right)\le a e^{-bt^\alpha}\,,~~\forall~t>0\,.$$
\end{definition}
This definition generalizes the well-known sub-exponential and sub-Gaussian distributions, and has been systematically introduced in \cite{vladimirova2020sub,kuchibhotla2018moving}.  

\begin{remark}\label{rem:sw}
Unlike common practices in the literature, our notation of the sub-Weibull tail inequality  only focuses on the scaling $K$. We do not keep track of the constants $a,b,\alpha$, which can vary from one instance to another as long as they stay bounded and bounded away from zero.  It is easy to check that our notion of sub-Weibull is invariant under constant scaling: if $X$ is $1$-SW, then $X$ is $c$-SW for all positive constant $c$.  Aside from the scaling $K$, the second (and only) important parameter in sub-Weibull tail inequality is the exponent $\alpha$.  In the literature, it is more common to write $(K,\alpha)$-SW.   Our proof can be adapted to keep explicit track of the constant $\alpha$ in each instance at the cost of more complicated bookkeeping, but that does not qualitatively change the results.

In our theoretical developments, the dependence on logarithm terms may be complicated, as it involves the sub-Weibull constant $\alpha$, which may vary between lines.  For brevity of presentation, we absorb such logarithm terms into
 the $\tilde O(\cdot)$ notation. Where $A \le \tilde O(B)$ means that there are positive constants $c_1,c_2$ independent of $(n,p)$ such that 
 $A\le c_1 \log^{c_2}(n+p)B$. 
\end{remark}

To introduce the stability conditions,
let $X_i'$ be iid copies of $X_i$ for $1\le i\le n$ and $\mathbf X^i$ be the vector obtained by replacing $X_i$ with $X_i'$.
For any function $f(\mathbf X)$, define
$\nabla_i f(\mathbf X)=f(\mathbf X)-f(\mathbf X^i)$.

The main stability conditions involved in our normal approximation bounds are the following.
\begin{assumption}\label{ass:lstability}
	There exists $\lstability=n^{-c_\ell}$ for some $c_\ell \in (0,1/2]$ such that
	\begin{enumerate}
		\item [(a)] For all $i\in [\tilde n]$, $r\in[p]$, $\nabla_i\ell_r(X_0,\mathbf X_{-1})$ is $n^{-1/2}\lstability$-sub-Weibull.
		\item [(b)] For all $1\le i<j\le \tilde n$, $r\in[p]$, $\nabla_j\nabla_i\ell_r(X_0,\mathbf X_{-1})$ is $n^{-3/2}\lstability$-sub-Weibull.
		\item [(c)] For all $r$, $\ell_r(X_0,\mathbf X_{-1})$ is $1$-sub-Weibull.
	\end{enumerate}
\end{assumption}
 \Cref{ass:lstability} requires that the first order difference $\nabla_i\ell_r(X_0,\mathbf X_{-1})$ has a scaling no larger than $\lstability n^{-1/2}$, the second order difference $\nabla_j\nabla_i\ell_r(X_0,\mathbf X_{-1})$ has a scaling no larger than $\lstability n^{-3/2}$, and the original loss function $\ell_r$ has a constant scaling.  The sub-Weibull tail ensures that with high probability all such quantities will not exceed their scalings by more than a poly-logarithm factor.  We require $\lstability\in [n^{-1/2},1)$, as this simplifies the presentation of the results and is also the most natural range of stability. Specifically, the approximation error bounds in our main theorems become meaningless if $\lstability>1$, and $\lstability\ll n^{-1/2}$ would be impractical, as it implies changing one sample point will incur a change less than $1/n$ in the loss.

We further remark that the scaling assumption on the second order difference \linebreak$\nabla_j\nabla_i\ell_r(X_0,\mathbf X_{-1})$ is stronger than that in \cite{austern2020asymptotics} by a factor of $\sqrt{n}$.  This is due to a fundamental difference between the low dimensional CLT and high dimensional Gaussian comparison, where the former only requires controlling the second moment of error terms, while the latter requires controlling the supremum of many such error terms.
More specifically, define the randomly centered loss at $X_i$
\begin{equation}
K_{r,i} =  \ell_r(X_i;\mathbf X_{-v_i}) - R_r(\mathbf X_{-v_i})\,,\label{eq:K}
\end{equation}
%J_{r,i} = & \ell(X_i;\mathbf X_{-v_i}) - \ell(X_i';\mathbf X_{-v_i})=\nabla_i \ell(X_i;\mathbf X_{-v_i})\,,\\
and
\begin{equation}
D_{r,i} =  \sum_{j\notin I_{v_i}} \nabla_i K_{r,i}\,.\label{eq:D}
%L_{r,i} = & \sum_{j\notin I_{v_i}} \nabla_i R(\mathbf X_{-v_j})\,.
\end{equation}
A key result in the low dimensional CLT  is that
$\|D_{r,i}\|_2\lesssim \lstability$ provided $\|n^{1/2}\nabla_i\ell_r(X_0,\mathbf X_{-1})\|_2 \allowbreak\le \lstability$
and $n\|\nabla_j\nabla_i\ell_r(X_0,\mathbf X_{-1})\|_2\le \lstability$.  However, in  the high-dimensional regime, we need to 
simultaneously control $D_{r,i}$ for all $1\le r\le p$, which cannot be guaranteed by a vanishing second moment on 
each individual term.  Our condition can be relaxed to requiring a similar $\nabla_j\nabla_i\ell_r(X_0,\mathbf X_{-1})$ being $n^{-1}\lstability$-sub-Weibull, provided we can further assume that $\frac{D_{r,i}}{\|D_{r,i}\|_2}$ is $1$-sub-Weibull.  While this additional assumption certainly seems reasonable in many situations, we choose to work with the stronger condition on the second order difference as presented in \Cref{ass:lstability}, which allows for a more streamlined presentation.
Nevertheless, the stability conditions in \Cref{ass:lstability} are still practically plausible since we should typically expect each $\nabla$ operator to reduce the scale of the loss function by a factor up to $n$.

 In \Cref{sec:stability}, we provide two rigorous examples that satisfy the stability conditions, including stochastic gradient descent with convex and smooth objective functions, and a non-parametric regression with sub-Weibull design.

%Here we describe two examples with simple intuition about the plausibility of the stability conditions required in \Cref{ass:lstability}. %  Rigorous derivations for these  seem possible by invoking convexity and smoothness in M-estimation (Example 1, see also Proposition 4 of \cite{austern2020asymptotics}), and random matrix theory (Example 2).   Another example satisfying such stability conditions is bagged estimates \citep{chen2022debiased}. 

\begin{example}[Classical M-Estimator]
	Consider a parametric loss function $\ell(x_0;x_1,...,x_{\tilde n})=\ell(x_0;\hat\theta)$ with $\hat\theta$ estimated from the input data $(x_1,...,x_{\tilde n})$.  Under classical parametric regularity conditions such $\hat\theta$ can 
	%satisfy the stability required in \Cref{ass:lstability}. For example, if $\hat\theta$ is 
	 be asymptotically linear \cite[][Chapter 3]{tsiatis2006semiparametric}. Then we have
	$$
    \hat\theta =\theta_0 +\frac{1}{\tilde n}\sum_{i=1}^{\tilde n}\varphi(x_i)+o_P(n^{-1/2})\,.
	$$
	Intuitively, the first order stability bound for $\nabla_i \ell$ can be satisfied if
	the $o_P(n^{-1/2})$ remainder term is $n^{-1/2}\epsilon_\ell$-SW for some $\epsilon_\ell=o(1)$.
	The second order stability condition would require more subtle structure within the $o_P(n^{-1/2})$ remainder term.  \cite{austern2020asymptotics} gave a formal analysis of the first and second order statbility conditions for $M$-estimators under convexity and smoothness.
\end{example}

\begin{example}[Penalized Least Squares]
	Now consider a high dimensional ridge-regression where we have paired sample points $x_i=(z_i,y_i)$:
	$$\hat\beta=\arg\min {\tilde n}^{-1}\sum_{i=1}^{\tilde n}(y_i-z_i^T\beta)^2+\lambda\|\beta\|_2^2\,.$$
When the dimensionality of $x_i$ is comparable or smaller than the sample size, it is possible to argue that the empirical covariance matrix will be well-conditioned with high probability, and hence changing any one sample point will incur an $O(n^{-1})$ change in $\hat\beta$.  
For a simpler argument under stronger assumptions, if the sample points are bounded and $\lambda\gg n^{-1/2}$, then the stability requirement on $\nabla_i\ell$ holds.  If $\lambda\gg n^{-3/4}$ then the stability requirement on $\nabla_j\nabla_i\ell$ also holds.
\end{example}

\subsection{Main theorem with random centering}\label{sec:main_thm}
Our first main result is a Gaussian comparison with random centering.  In order to state the result, we need to specify the covariance of the Gaussian vector $\mathbf Y$. Using the notation $K_{r,0,v}=\ell_r(X_0;\mathbf X_{-v})-R_r(\mathbf X_{-v})$, define
% $$
% \sigma_{r,v}^2=\mathbb E(K_{r,0,v}^2|\mathbf X_{-v})
% $$
% and
$$
\sigma_{rs,v}=\mathbb E (K_{r,0,v}K_{s,0,v}|\mathbf X_{-v})\,,~~1\le r,s\le p\,,
$$
to be the conditional variance/covariance of the loss functions given the fitted model using input data $\mathbf X_{-v}$,
and let
$$
\sigma_{rs} = \mathbb E\sigma_{rs,1}
$$
be the expected value of $\sigma_{rs,v}$.
Let $\Sigma=[\sigma_{rs}]_{1\le r,s\le p}$ be the corresponding $p\times p$ expected conditional covariance matrix, and $\Sigma_v$ the corresponding random covariance matrix with entries $\sigma_{rs,v}$.

We will show that $\sigma_{rs,v}\approx \sigma_{rs}$ (\Cref{lem:cov_concentration}) and $\sqrt{n}(\hat{\mathbf R}_{\rm cv}-\tilde{\mathbf R})$ behaves like a centered Gaussian vector with covariance matrix $\Sigma$.   
% Consider the sequence of conditional Gaussian vectors 
% $$
% \mathbf Y_{i}= \Sigma^{1/2}\boldsymbol{\varepsilon}_{i}\,,~~1\le i\le n\,,
% $$
% where $\boldsymbol\varepsilon_{i}\stackrel{iid}{\sim}N(0,I_p)$ and are independent of everything else. 

\begin{theorem}[High-dimensional CLT for Cross-validation with random centering]\label{thm:1}
Assume \Cref{ass:condition_l,ass:lstability} hold,
then we have
	\begin{equation*}
	\sup_{z\in\mathbb R} \left|\mathbb P\left(\max_{1\le r\le p}\sqrt{n}(\hat R_{{\rm cv},r}-\tilde R_r)\le z\right)-\mathbb P\left(\max_{1\le r\le p} Y_i \le z\right)\right|\le \tilde O\left[n^{-1/8}+\lstability^{1/3}\right]\,,
	\end{equation*}
	for $\mathbf Y=(Y_1,...,Y_p)\sim N(0,\Sigma)$.
\end{theorem}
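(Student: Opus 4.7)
The strategy is to approximate
\[
S_r := \sqrt{n}\bigl(\hat R_{{\rm cv},r}-\tilde R_r\bigr) \;=\; n^{-1/2}\sum_{i=1}^{n} K_{r,i}
\]
uniformly over $r\in[p]$ by a sum of independent random vectors and then invoke the \cite{chernozhukov2013gaussian} high-dimensional Gaussian comparison. Following the martingale idea from \cite{austern2020asymptotics}, I introduce the filtration $\mathcal F_j=\sigma(X_1,\ldots,X_j)$ and write $S_r=\sum_{j=1}^{n}\Delta_{r,j}$ with increments $\Delta_{r,j}:=\mathbb E[S_r\mid\mathcal F_j]-\mathbb E[S_r\mid\mathcal F_{j-1}]$. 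Each $\Delta_{r,j}$ is then replaced by its first-order Hoeffding projection $\xi_{r,j}:=\mathbb E[S_r\mid X_j]$, so that the vectors $\xi_j=(\xi_{1,j},\ldots,\xi_{p,j})$ are independent across $j$ and form the natural input for a CCK-type comparison.

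The technical core is to control the residual $\max_{r\in[p]}\bigl|\sum_{j=1}^{n}(\Delta_{r,j}-\xi_{r,j})\bigr|$ in sub-Weibull tail. This residual captures higher-order interactions between $X_j$ and the rest of the sample and is expressible through the second-order differences $\nabla_i\nabla_j\ell_r$ and the sensitivities $D_{r,i}$ from \eqref{eq:D}. \Cref{ass:lstability} supplies sub-Weibull tail control on both, and a union bound over $r\in[p]$ contributes only polylogarithmic factors (absorbed into $\tilde O$), yielding a high-probability $\ell_\infty$ bound of order $\tilde O(\lstability)$. This is exactly where the sub-Weibull strengthening over the $L^2$ conditions of \cite{austern2020asymptotics} is essential, since CCK bounds a maximum over $p$ coordinates and therefore requires tail rather than second-moment control.

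Once the residual is tamed, I apply the CCK Gaussian comparison to the independent sum $\sum_{j}\xi_j$, whose coordinates inherit sub-Weibull tails from \Cref{ass:lstability} via the identity $\xi_{r,j}=\mathbb E[S_r\mid X_j]$. This produces a Gaussian $\mathbf Y'\sim N(0,\Sigma')$ with $\Sigma'=\mathrm{Cov}(\sum_{j}\xi_j)$ at Kolmogorov distance $\tilde O(n^{-1/8})$. A direct computation of $\mathrm{Cov}(\xi_{r,j},\xi_{s,j})$, combined with \Cref{lem:cov_concentration}, shows $\|\Sigma'-\Sigma\|_{\max}$ is small, and a Gaussian-to-Gaussian comparison then swaps $\mathbf Y'$ for $\mathbf Y\sim N(0,\Sigma)$ at the same price. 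Finally, I convert the $\tilde O(\lstability)$ $\ell_\infty$ approximation error into a Kolmogorov-distance bound using Nazarov-type anti-concentration for Gaussian maxima; the cube-root smoothing inherent in this step supplies the $\lstability^{1/3}$ contribution, and collecting the pieces yields the claimed $\tilde O[n^{-1/8}+\lstability^{1/3}]$.

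The hard part is the residual bound. The $L^2$ approach of Austern--Zhou is inadequate here, because controlling $\max_{r}|\cdot|$ over $p$ simultaneous coordinates demands sub-Weibull tails, and the sensitivities $D_{r,i}$ aggregate $\tilde n$ first-order differences that must exhibit tail-concentration cancellations. This explains both the $\sqrt{n}$-tighter second-order stability assumption in \Cref{ass:lstability} and the careful propagation of sub-Weibull tails through the martingale decomposition required for the uniform bound.
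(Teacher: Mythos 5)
Your plan is correct in substance but follows a genuinely different route from the paper. The paper never passes to an independent sum: it runs a Slepian/Lindeberg interpolation directly on the dependent sum $W_r=n^{-1/2}\sum_i K_{r,i}$, pairing each $K_{r,i}$ with a Gaussian surrogate $\Sigma_{v_i}^{1/2}\varepsilon_i$ whose covariance matches the \emph{conditional} covariance given the fitting data. The first- and second-order Taylor terms then vanish by this conditional matching, the leftover second-order term is exactly the $D_{s,i}$ contribution (controlled by \Cref{lem:D}), and a final Gaussian-to-Gaussian comparison plus \Cref{lem:cov_concentration} replaces the Gaussian mixture $N(0,\Sigma_v)$ by $N(0,\Sigma)$. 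Your route instead extracts the H\'ajek projection $\xi_{r,j}=\mathbb E[S_r\mid X_j]=n^{-1/2}(\bar\ell_r(X_j)-R_r^*)$, pushes all the dependence into the residual, and invokes CCK as a black box on the iid sum. Both work; yours buys modularity (CCK off the shelf), while the paper's avoids having to identify the projection's covariance and gets the conditional-covariance structure for free.

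Two places in your plan need more care than you give them. First, the residual bound $\max_r|S_r-\sum_j\xi_{r,j}|\le\tilde O(\lstability)$ is not a one-liner: writing the residual as $n^{-1/2}\sum_i g_i$ with $g_i=K_{r,i}-(\bar\ell_r(X_i)-R_r^*)$, you need a martingale decomposition in $k$ whose increments are $n^{-1/2}(\nabla_k g_k+D_{r,k})$; the term $\nabla_k g_k$ requires its own martingale/Efron--Stein argument over the $\tilde n$ training points to show $\ell_r(X_k;\mathbf X_{-v_k})-\bar\ell_r(X_k)$ is $\lstability$-SW, and $D_{r,k}$ requires the full force of \Cref{lem:D} (hence the second-order stability). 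A naive triangle inequality over the $\sim n$ terms perturbed by $X_k$ gives only $\sqrt{n}\lstability$, so the cancellation inside $D_{r,k}$ is essential, as you correctly sense. Second, the covariance of your projection is $\mathrm{Cov}(\bar\ell_r(X_1),\bar\ell_s(X_1))$, which is \emph{not} $\sigma_{rs}=\mathbb E\,\mathrm{Cov}(\ell_r,\ell_s\mid\mathbf X_{-1})$; \Cref{lem:cov_concentration} (concentration of $\sigma_{rs,v}$ around $\sigma_{rs}$) does not bridge this gap. You instead need the law of total covariance plus Efron--Stein under \Cref{ass:lstability}(a) to show both $\mathbb E\,\mathrm{Var}(\ell_r\mid X_0)$ and $\mathrm{Var}(R_r(\mathbf X_{-1}))$ are $O(\lstability^2)$, whence $\|\Sigma'-\Sigma\|_{\max}=O(\lstability^2)$ and the swap costs $\tilde O(\lstability^{2/3})$. (Minor: with Nazarov anti-concentration the $\tilde O(\lstability)$ residual costs only $\tilde O(\lstability\sqrt{\log p})$ in Kolmogorov distance, so the $\lstability^{1/3}$ in the final bound need not come from that step; either way your claimed rate holds.)
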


\begin{remark}
\Cref{thm:1} implies that the Gaussian approximation error is small if $\lstability\lesssim n^{-c}$ for some constant $c>0$.  The result of \Cref{thm:1} can be easily extended to the quantity $\max_r |\hat R_{{\rm cv},r}-\tilde R_{r}|$ by applying \Cref{thm:1} to the augmented vector $(\hat{\mathbf R}_{\rm cv}-\tilde{ \mathbf R}, \tilde {\mathbf R}-\hat{\mathbf R}_{\rm cv})$ with the corresponding Gaussian vector $(\mathbf Y, -\mathbf Y)$.
\end{remark}

\begin{remark}\label{rem:constant_sigma}
In addition to a factor of 
$\log^c(p+n)$ with constant $c$ determined by the sub-Weibull exponents in \Cref{ass:lstability}, the $\tilde O(\cdot)$ notation in \Cref{thm:1} also contains a factor that depends on $\underline{\sigma}$, the lower bound of conditional risk standard deviation in \Cref{ass:condition_l}.  We suppressed this factor throughout this paper because a Gaussian comparison is practically most useful when the matching Gaussian process $Y$ is marginally standardized, so that $\underline\sigma=1$.  Otherwise, the maximum will be largely driven by the coordinates with large variances.  Such factors involving $\underline\sigma$ have been treated as constants in the literature of high-dimensional Gaussian comparison and anti-concentration of maxima of Gaussian processes \citep{chernozhukov2013gaussian,chernozhukov2015comparison}.   With more detailed bookkeeping in our proof, and inspecting the proof of the anti-concentration results in \cite{chernozhukov2015comparison}, the contribution of $\underline\sigma$ in the error bound in \Cref{thm:1} is a multiplicative factor of $O(\underline\sigma^{-2})$.
%can be written with an explicit dependence on $\underline{\sigma}$:
% \begin{align*}
% 	&\sup_{z\in\mathbb R} \left|\mathbb P\left(\max_{1\le r\le p}\sqrt{n}(\hat R_{{\rm cv},r}-\tilde R_r)\le z\right)-\mathbb P\left(\max_{1\le r\le p} Y_i \le z\right)\right|\\
% 	\le & c_1 \log^{c_2}(n+p)\left[n^{-1/8}+\underline{\sigma}^{-2}\lstability^{1/3}\log^{1/3}(\lstability^{-1})\right]\,,
% 	\end{align*}
\end{remark}
% for constants $c_1$, $c_2$ independent of $(n,\lstability,\underline{\sigma})$.

\section{Simultaneous confidence bands for cross-validated risk}\label{sec:simu_band}
%!TEX root = ./hd_cv_clt.tex

% In practice, inference based on Gaussian approximations with random centering and scaling involves calibrating the randomness of the conditional variances $$\sigma_{r,v}^2=\sigma_r^2(\mathbf X_{-v})={\rm Var}\left[\ell(X_0; \mathbf X_{-v})|\mathbf X_{-v}\right]\,,~~r\in [p]$$
% and the conditional covariances 
% $$
% \sigma_{rs,v} = \sigma_{rs,v}(\mathbf X_{-v})={\rm Cov}\left[\ell_r(X_0; \mathbf X_{-v}),~\ell_s(X_0; \mathbf X_{-v})\big|\mathbf X_{-v}\right]\,,~~1\le r<s\le p\,.
% $$
% In this section we study this problem in a simple but important scenario, where the conditional variances concentrates around its expectation.  The more challenging approach would be considering a deterministic centering and scaling in the Gaussian comparison.  This is pursued in \Cref{sec:thm3} below.

In this section we consider various statistical inference tools following from \Cref{thm:1}, including  constructing simultaneous confidence bands of the average fitted risks and possible ways to construct confidence sets of the ``optimal'' model.

\subsection{Confidence bands}\label{sec:conf_band}
% \Cref{lem:cov_concentration} suggests that under the stability assumption, $\sup_{r,s,v}|\sigma_{rs,v}-\sigma_{rs}|\le \tilde O(\lstability)$ with high probability.  As a consequence, inspite the random vector $\mathbf Y=n^{-1/2}\sum_i \Sigma_v^{1/2} \boldsymbol{\varepsilon}_i$ being a Gaussian mixture, it behaves much like a single fixed Gaussian vector with covaraince $\Sigma$.  
Following \Cref{thm:1}, we consider the coordinate-wise standardized process
$$\sqrt{n}\Lambda^{-1/2}(\hat{\mathbf R}_{\rm cv}-\tilde{\mathbf R})\,,$$
where
\begin{align*}
\Lambda = &{\rm diag}(\sigma_{11},...,\sigma_{pp})
\end{align*}
is the diagonal submatrix of $\Sigma$.
% \begin{align*}
% \Theta = &\mathbb E[\mathbf K_1\mathbf K_1^T|\mathbf X_{-1}]\,,
% \end{align*}
% with $\mathbf K_i=(K_{1,i},...,K_{p,i})^T$.

Let $\hat\Lambda$ and $\hat\Sigma$ be the natural plug-in estimates (i.e., the average of all the within-fold empirical covariance matrices) of $\Lambda$ and $\Sigma$. In particular, let $\boldsymbol{\ell}=(\ell_1,...,\ell_p)$, and $\hat\Sigma_v$ be the empirical covariance of $\left\{\boldsymbol{\ell}(X_i;\mathbf X_{-v}):i\in I_v\right\}$. Then $\hat\Lambda$ and $\hat\Sigma$ can be the aggregated estimate.
\begin{align}\label{eq:global_var_est}
	\hat\Sigma = &\frac{1}{V}\sum_{v=1}^V\hat\Sigma_v\,,~~~
	\hat\Lambda = {\rm diag}(\hat\sigma_{11},\dots,\hat\sigma_{pp})\,.
\end{align}

Given a nominal type I error level $\alpha\in(0,1)$, the following fully data-driven procedure computes a simultaneous normalized confidence band for the vector $\tilde{\mathbf R}$ with asymptotic coverage $1-\alpha$.

 Let $\hat z_\alpha$ be the upper $1-\alpha$ quantile of $\|Z\|_\infty$, with $Z\sim N(0,\hat\Lambda^{-1/2}\hat\Sigma\hat\Lambda^{-1/2})$.  Given estimates $(\hat\Lambda,\hat\Sigma)$, $\hat z_\alpha$ can be approximated efficiently using Monte-Carlo methods. Such a Monte-Carlo approximation error can be controlled by combining the standard Dvoretsky-Kiefer-Wolfowitz inequality and anti-concentration of Gaussian maxima. For the sake of brevity, we use the theoretical value $\hat z_\alpha$, which corresponds to the limiting case of infinite Monte-Carlo sample size.

 The simultaneous confidence band for cross-validated risk is
	\begin{align}
	\widehat{\rm CI}_r= \left[\hat R_{{\rm cv},r}-\frac{\hat\sigma_{rr}^{1/2} \hat z_\alpha}{\sqrt{n}}\,,~\hat R_{{\rm cv},r}+\frac{\hat\sigma_{rr}^{1/2} \hat z_\alpha}{\sqrt{n}}\right]\,,~~\text{for each } r\in[p]\,,\label{eq:ci}\end{align} 
where $\hat\sigma_{rr}$ is the $r$th diagonal entry of $\hat\Sigma$ in \eqref{eq:global_var_est}.

\begin{corollary}\label{cor:simul_ci}
	Under Assumptions \ref{ass:condition_l} and \ref{ass:lstability}, the confidence intervals constructed in \eqref{eq:ci} satisfy
	$$
    \mathbb P\left(\tilde R_{r}\in \widehat{\rm CI}_r\,,~\forall~r\in[p]\right)\ge 1-\alpha-\tilde O(n^{-1/8}+\lstability^{1/3})\,.
	$$
\end{corollary}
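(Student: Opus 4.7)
The plan is to reduce the corollary to \Cref{thm:1} through three reductions: (i) replace the one-sided maximum in \Cref{thm:1} with a two-sided maximum using the symmetrization trick from the remark following the theorem; (ii) switch from the raw CV-risk vector to its marginally standardized version; and (iii) replace the true covariance structure with its plug-in estimate, both in the standardization of the statistic and in the Gaussian quantile $\hat z_\alpha$.

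For reduction (i)–(ii), apply \Cref{thm:1} to the rescaled losses $\tilde\ell_r = \ell_r/\sigma_{rr}^{1/2}$. Under \Cref{ass:condition_l}(b) we have $\sigma_{rr}\ge\underline\sigma^2>0$, and since $\ell_r$ is $1$-sub-Weibull, $\sigma_{rr}$ is bounded above as well, so the stability bounds in \Cref{ass:lstability} for $\tilde\ell_r$ hold with the same $\lstability$ up to a constant factor depending only on $\underline\sigma$. Applying \Cref{thm:1} to the doubled vector $(\tilde{\bm\ell}, -\tilde{\bm\ell})$ as suggested by the remark yields
\begin{equation*}
\sup_{z\in\mathbb R}\left|\mathbb P\left(\max_{r}\sqrt{n}\,\sigma_{rr}^{-1/2}|\hat R_{\mathrm{cv},r}-\tilde R_r|\le z\right)-\mathbb P(\|Z\|_\infty\le z)\right|\le \tilde O(n^{-1/8}+\lstability^{1/3}),
\end{equation*}
where $Z\sim N(0,\Lambda^{-1/2}\Sigma\Lambda^{-1/2})$. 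If we denote by $z_\alpha$ the upper $\alpha$-quantile of $\|Z\|_\infty$, this already implies a simultaneous confidence band of the same form as \eqref{eq:ci} but with $(\sigma_{rr},z_\alpha)$ in place of $(\hat\sigma_{rr},\hat z_\alpha)$, valid up to the advertised error.

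The remaining task is reduction (iii): show that the data-driven $\hat z_\alpha$ and $\hat\sigma_{rr}$ can replace $z_\alpha$ and $\sigma_{rr}$ at negligible additional cost. The key intermediate bound is the entry-wise covariance concentration
\begin{equation*}
\max_{r,s\in[p]}|\hat\sigma_{rs}-\sigma_{rs}|\le \tilde O(n^{-1/2}+\lstability)
\end{equation*}
with high probability, established by combining (a) a within-fold concentration argument: conditional on $\mathbf X_{-v}$, the fold-$v$ empirical covariance $\hat\Sigma_v$ is an average of $n/V$ i.i.d.\ outer products of sub-Weibull vectors, so a Bernstein-type tail bound together with a union bound over $r,s\in[p]$ yields $\|\hat\Sigma_v-\Sigma_v\|_{\max}\le \tilde O(n^{-1/2})$; and (b) the covariance concentration lemma referenced just before \Cref{thm:1}, which uses \Cref{ass:lstability} to control $|\sigma_{rs,v}-\sigma_{rs}|\le \tilde O(\lstability)$. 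Because $\underline\sigma$ separates $\sigma_{rr}$ from zero, these entry-wise bounds transfer to $\hat\Lambda^{-1/2}\hat\Sigma\hat\Lambda^{-1/2}$ being close, entry-wise, to $\Lambda^{-1/2}\Sigma\Lambda^{-1/2}$. The Gaussian–Gaussian comparison inequality of \citet{chernozhukov2015comparison} then gives $\sup_z|\mathbb P(\|\hat Z\|_\infty\le z\mid\hat\Sigma,\hat\Lambda)-\mathbb P(\|Z\|_\infty\le z)|\le \tilde O(n^{-1/4}+\lstability^{1/2})$, so $\hat z_\alpha$ approximates $z_\alpha$ within this error. Similarly, $\hat\sigma_{rr}/\sigma_{rr}=1+\tilde O(n^{-1/2}+\lstability)$ uniformly in $r$, and pulling this ratio into the statistic incurs only a bounded multiplicative perturbation of $z$, whose effect on Kolmogorov distance is absorbed by anti-concentration of $\|Z\|_\infty$. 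Chaining these three reductions through the triangle inequality delivers the claimed bound.

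The main obstacle is the entry-wise covariance concentration in (iii). The difficulty is twofold: the summands defining $\hat\Sigma$ are dependent because the fitted models $\hat f_r$ involve $\tilde n$ points each, so independence can only be invoked conditionally on a training fold; and the target $\Sigma$ is not the conditional but the unconditional expected covariance, so transferring from $\Sigma_v$ to $\Sigma$ genuinely requires the stability machinery of \Cref{ass:lstability}. Once this lemma is in hand, the rest of the argument is a standard Gaussian-comparison plus anti-concentration calculation, and the final error rate $\tilde O(n^{-1/8}+\lstability^{1/3})$ is dominated by the rate already inherited from \Cref{thm:1}.
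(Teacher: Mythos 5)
Your proposal follows essentially the same route as the paper's proof: apply \Cref{thm:1} to the marginally standardized (and doubled) loss vector, establish entry-wise covariance concentration $\max_{r,s}|\hat\sigma_{rs}-\sigma_{rs}|\le\tilde O(n^{-1/2}+\lstability)$ by combining within-fold i.i.d.\ concentration of $\hat\Sigma_v$ around $\Sigma_v$ with the stability-based concentration of $\Sigma_v$ around $\Sigma$, and then chain through the Gaussian--Gaussian comparison of \citet{chernozhukov2015comparison} and anti-concentration to absorb the plug-in errors in $\hat z_\alpha$ and $\hat\sigma_{rr}$. The argument is correct, and the minor discrepancies in intermediate rates are harmless since they are dominated by the $\tilde O(n^{-1/8}+\lstability^{1/3})$ error already inherited from \Cref{thm:1}.
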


\subsection{Model confidence set}
Now, we consider the model/tuning selection problem.
Let $$r^*=\arg\min_{r}\tilde R_{r}$$ be the index of the candidate model with the smallest average fitted risk.  We hope to use the Gaussian comparison to construct a confidence set for $r^*$.  A simple way to do so is directly using \eqref{cor:simul_ci}:
\begin{equation}\label{eq:naive_conf_set}
\hat{\mathcal A}_0=\left\{r: \hat R_{{\rm cv},r}-\hat\sigma_{rr}^{1/2}\hat z_\alpha/\sqrt{n}\le \min_{s} \hat R_{{\rm cv},s}+\hat\sigma_{ss}^{1/2}\hat z_\alpha/\sqrt{n}\right\}\,.
\end{equation}
It is a direct consequence of \Cref{cor:simul_ci} that 
$$
\mathbb P(r^*\in\hat{\mathcal A}_0)\ge 1-\alpha-\tilde O(n^{-1/8}+\lstability^{1/3})\,.
$$
However, $\hat{\mathcal A}_0$ often unnecessarily contains too many models, as it ignores the correlations among the coordinates of $\hat{\mathbf R}_{\rm cv}$.

Following the idea in \cite{lei2020cross}, we instead consider the following difference based method, which takes into account the correlations of the cross-validated risks.
For each $r$, consider the risk difference vector 
$(\hat R_{{\rm cv},r}-\hat R_{{\rm cv},s}:s\neq r)$, and apply the above framework to
$\ell^{(r)}_s\coloneqq \ell_{r}-\ell_{s}$ to test whether $\hat R_{{\rm cv},r}-\hat R_{{\rm cv},s}>0$ for some $s\neq r$.
Here, we are considering a one-sided hypothesis, so 
instead of the two-sided confidence band in \Cref{eq:ci}, we consider the one-sided version.

For each $r\in[p]$, consider $p-1$ difference loss functions $\ell^{(r)}_s=\ell_r-\ell_s$ for $s\in[p]\backslash\{r\}$. Now apply the cross-validation normal approximation theory to the $p-1$ standardized loss functions $(\ell^{(r)}_s/\sigma_{ss}^{(r)}:1\le s\le p,~s\neq r)$, where
$$
\sigma^{(r)}_{st} = \mathbb E\left\{{\rm Cov}\left[\ell^{(r)}_s(X_0;\mathbf X_{-1}),\ell^{(r)}_t(X_0;\mathbf X_{-1})\big|\mathbf X_{-1}\right]\right\}\,.
$$ Let $\hat z^{(r)}_{\alpha}$ be the $1-\alpha$ quantile of the maximum of the corresponding $(p-1)$-dimensional Gaussian vector with estimated covariance $[\hat\Lambda^{(r)}]^{-1/2}\hat\Sigma^{(r)}[\hat\Lambda^{(r)}]^{-1/2}$, where $\hat\Sigma^{(r)}=[\hat\sigma^{(r)}_{st}]_{s,t\in[p]\backslash\{r\}}$ and $\hat\Lambda^{(r)}$ is its diagonal version. % counterparts of $\hat\Sigma$ and $\hat\Lambda$ when applied to the $(p-1)$ difference loss functions $(\ell_r-\ell_s:s\in[p]\backslash\{r\})$.  
Then our model confidence set is
% Let $\widehat{\rm CI}^{(r)}_s$ be the corresponding confidence interval for $\bar R_{{\rm cv},r}-\bar R_{{\rm cv},s}$ contructed from the difference loss functions $(\ell^{(r)}_s:s\neq r)$.
% The difference-based model confidence set is
\begin{equation}\label{eq:diff_conf_set}
\hat{\mathcal A} = \left\{r: \sup_{s\neq r}\hat R_{{\rm cv},r}-\hat R_{{\rm cv},s}-[\hat\sigma^{(r)}_{ss}]^{1/2}\hat z^{(r)}_\alpha/\sqrt{n} \le 0\right\}\,.
\end{equation}
\begin{proposition}\label{pro:cvc_coverage}
	If the conditionally standardized difference loss functions 
	$\frac{\ell^{(r)}_s-(\tilde R_r-\tilde R_s)}{\sqrt{\sigma^{(r)}_{ss}}}$ satisfy \Cref{ass:condition_l,ass:lstability} for all $r,s$, then
	$$
\mathbb P\left(r^*\in\hat{\mathcal A}\right)\ge 1-\alpha-\tilde O(n^{-1/8}+\lstability^{1/3})\,.
	$$
\end{proposition}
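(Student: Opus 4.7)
The plan is to reduce the coverage event to a one-sided supremum that Theorem \ref{thm:1} can handle directly, and then absorb the error from using plug-in estimates of the covariance. Throughout, fix $r^\ast = \arg\min_r \tilde R_r$ and work with the $(p-1)$-dimensional vector of difference loss functions $\{\ell^{(r^\ast)}_s : s \neq r^\ast\}$.

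\emph{Step 1: reduction via the minimizer.} Because $\tilde R_{r^\ast} - \tilde R_s \le 0$ for every $s$, the event $\{r^\ast \in \hat{\mathcal A}\}$ is implied by
\begin{equation*}
\sup_{s \neq r^\ast} \frac{\sqrt n\bigl[(\hat R_{{\rm cv},r^\ast} - \hat R_{{\rm cv},s}) - (\tilde R_{r^\ast} - \tilde R_s)\bigr]}{[\hat\sigma^{(r^\ast)}_{ss}]^{1/2}} \;\le\; \hat z^{(r^\ast)}_\alpha\,.
\end{equation*}
Call the random supremum on the left $\hat M$ and let $M$ be its population-standardized version, i.e.\ with $\hat\sigma^{(r^\ast)}_{ss}$ replaced by $\sigma^{(r^\ast)}_{ss}$. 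It therefore suffices to show $\mathbb P(\hat M \le \hat z^{(r^\ast)}_\alpha) \ge 1-\alpha - \tilde O(n^{-1/8} + \lstability^{1/3})$.

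\emph{Step 2: high-dimensional Gaussian comparison on differences.} By the hypothesis, the standardized differences $\ell^{(r^\ast)}_s/\sqrt{\sigma^{(r^\ast)}_{ss}}$ obey Assumptions \ref{ass:condition_l}--\ref{ass:lstability}. Applying Theorem \ref{thm:1} to this family of $p-1$ loss functions gives
\begin{equation*}
\sup_{z}\;\bigl|\mathbb P(M \le z) - \mathbb P(G \le z)\bigr| \;\le\; \tilde O\bigl(n^{-1/8} + \lstability^{1/3}\bigr)\,,
\end{equation*}
where $G = \max_{s \neq r^\ast} Y^{(r^\ast)}_s$ and $Y^{(r^\ast)} \sim N(0,\,[\Lambda^{(r^\ast)}]^{-1/2}\Sigma^{(r^\ast)}[\Lambda^{(r^\ast)}]^{-1/2})$. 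By construction, the $1-\alpha$ quantile of $G$ is exactly the population analogue $z^{(r^\ast)}_\alpha$ of $\hat z^{(r^\ast)}_\alpha$, so the event $\{M \le z^{(r^\ast)}_\alpha\}$ has probability at least $1 - \alpha - \tilde O(n^{-1/8} + \lstability^{1/3})$.

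\emph{Step 3: replace population quantities by plug-in estimates.} Two approximations remain: $\hat\sigma^{(r^\ast)}_{ss} \approx \sigma^{(r^\ast)}_{ss}$ in the denominator of $\hat M$, and $\hat z^{(r^\ast)}_\alpha \approx z^{(r^\ast)}_\alpha$. Both are controlled through the covariance concentration result (\Cref{lem:cov_concentration}) that the excerpt promises before Theorem \ref{thm:1}: one applies it to the difference losses to obtain $\max_{s,t}|\hat\sigma^{(r^\ast)}_{st} - \sigma^{(r^\ast)}_{st}| \le \tilde O(n^{-1/2})$ on a high-probability event. Combined with the uniform lower bound $\sigma^{(r^\ast)}_{ss} \ge \underline\sigma^2$ from Assumption \ref{ass:condition_l}(b), this gives $|\hat M - M| \le \tilde O(n^{-1/2})\cdot M$, which pushes into the error bound after a union with the Gaussian tail of $M$. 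For the quantiles, a Gaussian--Gaussian comparison (Chernozhukov--Chetverikov--Kato type, as used in \Cref{rem:constant_sigma}) implies that two centered Gaussian maxima whose covariance matrices are $\tilde O(n^{-1/2})$-close in sup-norm have Kolmogorov distance $\tilde O(n^{-1/4})$, hence $|\hat z^{(r^\ast)}_\alpha - z^{(r^\ast)}_\alpha| \le \tilde O(n^{-1/4})$ by Gaussian anti-concentration.

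\emph{Step 4: combine and apply anti-concentration.} Chaining Steps 1--3 yields
\begin{equation*}
\mathbb P(r^\ast \in \hat{\mathcal A}) \;\ge\; \mathbb P(M \le z^{(r^\ast)}_\alpha) - \mathbb P\bigl(|M - \hat M| + |z^{(r^\ast)}_\alpha - \hat z^{(r^\ast)}_\alpha| > \delta_n\bigr) - \sup_{|t| \le \delta_n}\mathbb P\bigl(|M - z^{(r^\ast)}_\alpha| \le t\bigr)
\end{equation*}
for any $\delta_n$ that dominates the Step-3 approximation error; the last term is $\tilde O(\delta_n)$ by anti-concentration of Gaussian maxima (transferred to $M$ via Step 2). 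Choosing $\delta_n = \tilde O(n^{-1/4})$ and summing all error contributions gives the advertised bound $1 - \alpha - \tilde O(n^{-1/8} + \lstability^{1/3})$, since this rate already dominates the $n^{-1/4}$ plug-in error.

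The main obstacle is Step 3: verifying that the plug-in covariance $\hat\Sigma^{(r^\ast)}$ concentrates uniformly around $\Sigma^{(r^\ast)}$ on the difference losses is delicate because $\ell^{(r^\ast)}_s$ is itself a random contrast of two sub-Weibull losses that depend on overlapping folds. One expects this to reduce to the same argument behind \Cref{lem:cov_concentration} applied to the augmented loss family $\{\ell_r\}_{r=1}^p \cup \{\ell_r - \ell_s\}_{r\neq s}$, but the book-keeping of sub-Weibull scalings after subtraction — and the propagation into the Gaussian quantile via anti-concentration, which is where the $\underline\sigma$ factor in \Cref{rem:constant_sigma} reappears — is the nontrivial part.
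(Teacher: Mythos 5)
Your proposal is correct and follows essentially the route the paper intends: the paper omits this proof, stating it is almost identical to Theorem 1 of \cite{lei2020cross} with \Cref{thm:1} substituted for the Gaussian approximation, and your Steps 1--4 (one-sided reduction using $\tilde R_{r^*}-\tilde R_s\le 0$, Gaussian comparison on the standardized differences, plug-in covariance/quantile control as in the proof of \Cref{cor:simul_ci}) reconstruct exactly that argument. The only slip is in Step 3, where the covariance concentration should read $\tilde O(n^{-1/2}+\lstability)$ rather than $\tilde O(n^{-1/2})$ (the $\lstability$ term coming from $\|\Sigma_v-\Sigma\|_\infty$ via \Cref{lem:cov_concentration}), but this is absorbed into the final $\lstability^{1/3}$ rate and does not affect the conclusion.
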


\begin{remark}\label{rem:lossdiff}
\Cref{pro:cvc_coverage} resolves an outstanding question about the theoretical justification of the V-fold cross-validation with confidence method \citep[CVC][]{lei2020cross}. The proof is almost identical to that of Therem 1 in \cite{lei2020cross} except using the cross-validation Gaussian approximation (\Cref{thm:1}), and hence is omitted. Requiring the standardized difference loss functions to satisfy \Cref{ass:lstability} is non-trivial, because if the loss functions $\ell_r$ and $\ell_s$ are highly correlated, their difference can have very small variance.  
Intuitively, if $\nabla_i \ell_r\asymp n^{-1}$, for the stability condition to hold for the standardized difference loss we will need $\|\ell_r-\ell_s\|_2\gg n^{-1/2}$.   
%One can modify the proof of \Cref{thm:1} for the difference loss if the variance of $\ell_r-\ell_s$ vanishes sufficiently slowly.  
Such a slow vanishing requirement on $\ell_r-\ell_s$ precludes the case that both model $r$ and model $s$ produce $\sqrt{n}$-consistent estimates.  This intuition agrees with \cite{yang2007consistency}, which suggests that cross-validation may not be model selection consistent if both candidate models are $\sqrt{n}$-consistent.  A simple illustration of this issue is given in Section 2.3 of \cite{lei2020cross}.  In \Cref{sec:stability}, we give a concrete nonparametric regression example in which the stability conditions hold for the difference losses. 
\end{remark}

\subsection{Numerical Experiments}
In this subsection, we numerically verify the claim of \Cref{thm:1} as well as the model confidence sets considered in \Cref{sec:conf_band}.  We use $V=5$ in all simulations and all plotted values are averaged over 1000 generated data sets.

\paragraph{Simultaneous coverage vs marginal coverage.}
We first investigate the simultaneous coverage of confidence bands for the cross-validated risk. To do so, we generate a predictor matrix $Z \sim N(0, I)$ and response vector $Y = Z \beta + \epsilon$  where $\epsilon \sim N(0,  \sigma^2  I)$ with $\sigma^2=\|\beta\|_2^2/\nu$ and $\beta$ is a sparse $d$-dimensional vector with the first $s$ entries being 1 and the remainder being 0.  
%Also, $\nu$ is specified to enforce a desired signal-to-noise ratio. 
We set $d=20$, $s=5$, and $\nu=1000$.   We then fit lasso regressions across a grid of 50 regularization parameters and generate confidence bands. 

\begin{figure}[]\centering
\includegraphics[width=1\columnwidth]{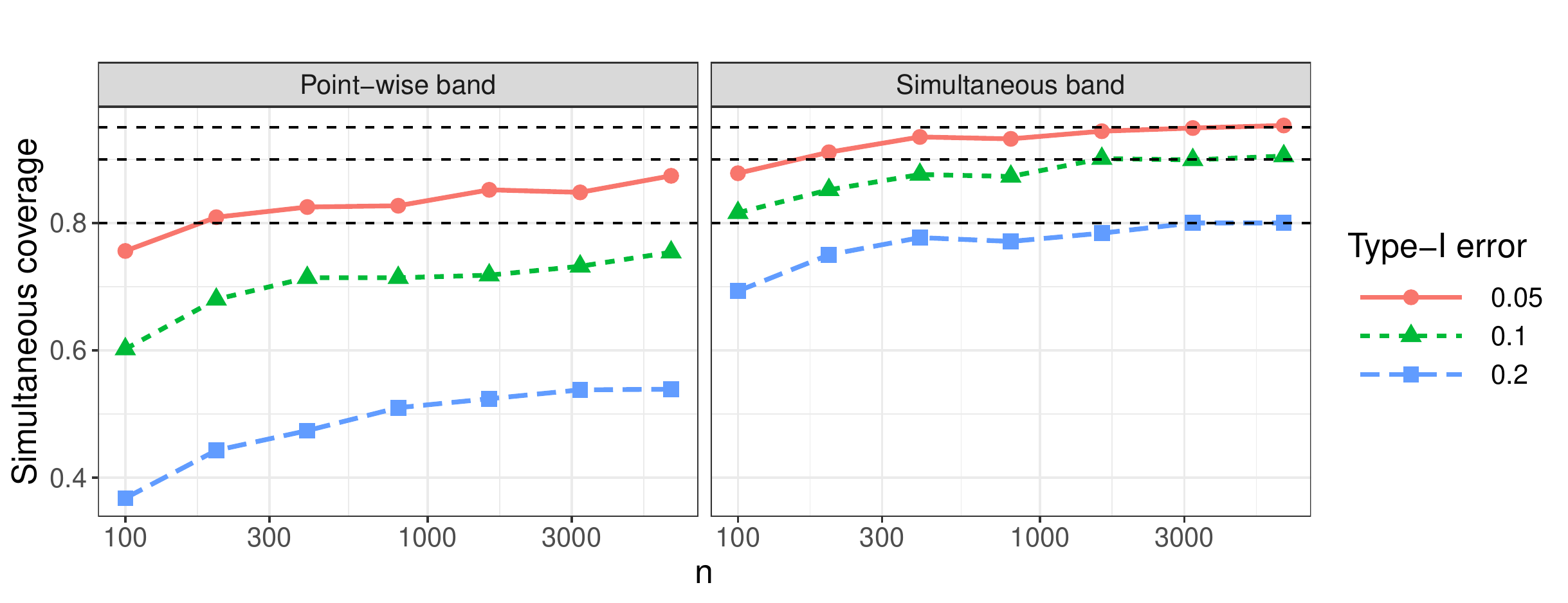}
\caption{\label{fig:coverbandALL_s5_p20_rh0_SNR1000}  Simultaneous coverage of point-wise confidence intervals and the confidence band from \Cref{eq:ci}. The horizontal dashed lines represent the nominal level given by $1-\alpha$. }
\end{figure}

Figure \ref{fig:coverbandALL_s5_p20_rh0_SNR1000} shows the simultaneous coverage of a confidence band generated by all point-wise confidence intervals (left) and the confidence band as specified in \Cref{eq:ci} at various values of $n$ and $\alpha$. We see that the latter method has coverage much closer to the nominal level than the former. Therefore, the point-wise procedure is insufficient for providing the correct simultaneous coverage, suggesting that the simultaneous adjustment is indeed necessary.  This simulation also suggests that the coverage of the simultaneous band is not overly conservative.

\begin{figure}[t]\centering
\includegraphics[width=1\columnwidth]{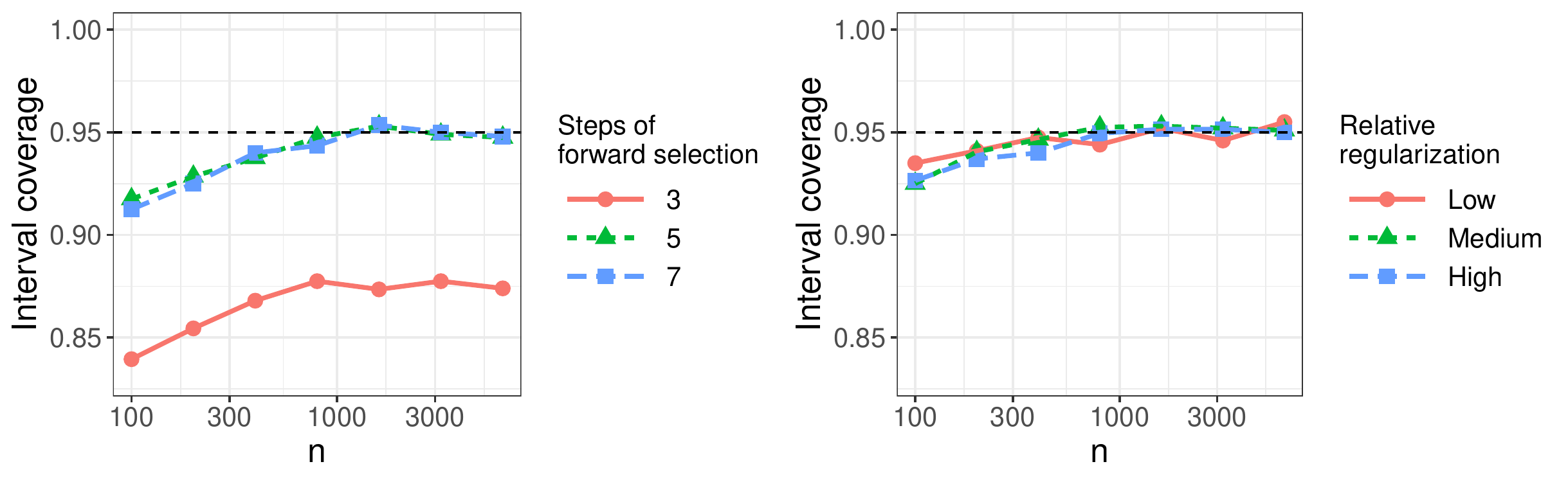}
\caption{\label{fig:fwd_all_summary_b1_s5rh0} Coverage of CV risk confidence intervals for forward selection terminated at different values (left) and lasso at different regularization values (right). The dashed horizontal line marks the nominal level given by $1-\alpha$.}
\end{figure}

\paragraph{The importance of stability.}
The impact of stability on the quality of Gaussian approximation of cross-validated risks has been experimented in \cite{austern2020asymptotics}. Here we provide additional empirical results on this front.  Consider confidence intervals for forward selection in the same setting but with $d=10$ and $\alpha=0.05$. Specifically, we look at the point-wise interval coverage of forward selection terminated at different model sizes--one less than $s$, one equal to $s$, and one larger than $s$. The left plot in \Cref{fig:fwd_all_summary_b1_s5rh0} shows that at 3 steps, the one-dimensional $95\%$-confidence interval based on \Cref{thm:1} under covers regardless of the sample size, while at 5 and 7 steps the coverage converges to nominal level as the sample size increases. This observation is consistent with the stability condition.  Before $s$ is reached, forward selection is quite unstable in this setting, as the non-zero entries of $\beta$ have the same magnitude. Therefore, the algorithm is equally likely to pick any subset of $s-1$ non-zero coordinates, and changing the value of one data point can incur a change of selected variables with non-negligible chance, resulting in instability of the loss function.  When forward selection reaches exactly $s$ steps, it selects the correct subset with overwhelming probability, leading to a stable loss function.  When forward selection selects one more variable, the index of the additional selected variable is not stable but the fitted coefficient is very close to zero, so that the fitted loss function is still stable.  In contrast, the lasso algorithm is continuous in the input data, hence the stability is much easier to hold for all values of penalty parameters, as suggested by the right plot in \Cref{fig:fwd_all_summary_b1_s5rh0}.

\paragraph{The advantage of difference-based model confidence sets.}
Now we look at the model confidence set performance as applied to the lasso on data with increasing $n$. Specifically, we are studying the size $|\mathcal A|$ and coverage $\mathbb P(r^*\in{\mathcal A})$ for
\begin{enumerate}
 	\item [(1)] the na\"{i}ve method $\mathcal{A} = \hat{\mathcal A}_0$ as defined in \eqref{eq:naive_conf_set}, and
 	\item [(2)] the difference based method  $\mathcal{A} = \hat{\mathcal A}$ as defined in \eqref{eq:diff_conf_set}.
 \end{enumerate} This simulation setting is again similar with the value of $s$ fixed at 5, but $d$ grows at rate $n/10$. This time, to make our grid of regularization parameters, we first find $\lambda_{\text{max}} = \frac{1}{n} \Vert Z^T Y \Vert_\infty$, then the grid is defined by $\lambda_{\text{max}}  2^{i}/ \sqrt{1-1/V}$ for $i \in \{0, \dots, -9 \} $. The re-scaling of $\sqrt{1-1/V}$ is done since the choice of $\lambda$ in lasso is inversely proportional to the square root of the training sample size, and $\lambda_{\max}$ may be a bit too small for the reduced sample size in $V$-fold cross-validation. The left plot of \Cref{fig:setcoverSIZE_SNR1_r10_o10} shows that the difference based method produces considerably smaller sets while maintaining coverage of at least 0.95, supporting the intuition that the difference based method is able to take into account the joint randomness of the cross-validated risks.  On the right plot of \Cref{fig:setcoverSIZE_SNR1_r10_o10}, the empirical coverage of the na\"{i}ve method is always overly conservative, while the coverage of the difference based method does get closer to nominal level for certain values of $n$.  Intuitively, such a fluctuation of coverage as $n$ varies can be explained by whether the problem is close to the boundary of the null hypothesis.  More specifically, let $\boldsymbol{\delta}_r=(\tilde R_{r}-\tilde R_{s}:s\neq r)$.  In the difference based method, the null hypothesis for a candidate $r$ is that $\boldsymbol{\delta}_r$ is non-positive in each coordinate, which is a composite null hypothesis. The Gaussian approximation is derived precisely for the extreme point of the null hypothesis, where all coordinates of $\boldsymbol{\delta}_r$ are zero.  In practice, we will never really be working in this scenario.  Therefore, the supremum-based confidence set will be too conservative if $\boldsymbol{\delta}_r$ has many large negative coordinates, and will be nearly exact if most of the coordinates are close to $0$.   In our simulation, when $n$ increases, the relative performance of different tuning parameters also changes.  Indeed we do observe that when $n\le 2500$, the small sample size cannot quite distinguish the two best $\lambda$ values that perform nearly equally well.
 
 \begin{figure}[t]\centering
\includegraphics[width=1\columnwidth]{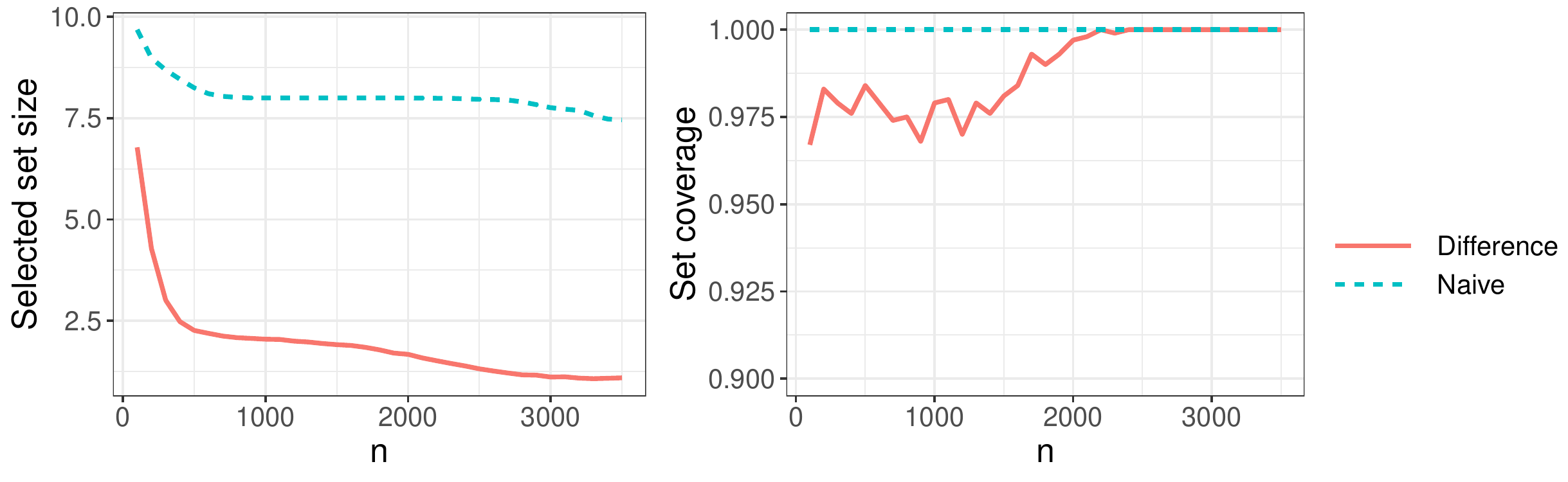}
\caption{\label{fig:setcoverSIZE_SNR1_r10_o10} Size and coverage of model confidence set procedures with $\alpha = 0.05$. }
\end{figure}

\section{Stability condition examples}\label{sec:stability}
%!TEX root = ./hd_cv_clt.tex
Before moving on to our Gaussian comparison results with deterministic centering, we pause to provide some context on the aforementioned stability conditions and  discuss some settings wherein they are satisfied.
Previously, the first order stability condition and its use in proving risk consistency of cross-validation were studied in \cite{bousquet2002stability}.  In the study of low-dimensional CLTs for cross-validation, \cite{austern2020asymptotics} studied both first and second order stability conditions for parameters estimated as global minimums of convex optimization problems.  In this section, we combine and extend these two lines of results.  First, we prove the first and second order stability for stochastic gradient descent applied to convex and smooth objective functions.  This result provides a more concrete example of stability by taking into consideration the particular optimization algorithm and its optimization error.  We also believe our result has the correct technical conditions on the objective function for the second order, which is missing in \cite{austern2020asymptotics}.  Second, we provide an example in which the first and second order stability conditions hold for the difference loss function. This is a much more subtle task since the differences in loss functions often have vanishing variances.

\subsection{Stability of stochastic gradient descent}

Stochastic gradient descent (SGD) is one of the most popular optimization algorithms in large-scale machine learning. 
Thus stability of SGD is particularly important in our analysis of cross-validation. First order stability bounds for SGD were established in \citet{hardt2016train} under convexity and smoothness assumptions, similar to those presented here and in \citet{austern2020asymptotics}. Here, we will provide first and second order stability bounds for SGD.

Suppose the loss function $\ell:\mathcal X \rightarrow \mathbb R$ is written as $\ell(x) = \ell(x;\theta)$ and parameterized by $\theta \in \Theta$. A classical approach for estimating $\theta$ is to use an M-estimator,
\begin{equation}\label{eq:M-est}
    \hat \theta = \argmin_{\theta \in \Theta^\prime \subset \Theta} \sum_{i=1}^n \psi(\theta;X_i),
\end{equation}
where $\psi:\Theta \rightarrow \mathbb R$ is an objective function, which may or may not be the same as $\ell$. A popular way to solve \eqref{eq:M-est} is to use SGD, whereby we initialize $\hat \theta_0 = 0$ and then iteratively update
\begin{equation}
    \hat \theta_{t+1} = \hat \theta_t - \alpha_{t+1} \dot \psi(\hat \theta_t;X_{t+1})
\end{equation}
for $t = 1,\dots,n-1$ with step size $\alpha_t \geq 0$, where  $\dot \psi(\theta; x)$ and $\ddot \psi(\theta; x)$ are the first and second partial derivatives of the objective function $\psi(\cdot,\cdot)$ with respect to $\theta$, respectively. Upon reaching $t=n-1$, we set $\hat \theta = \hat \theta_n$ which serves as our SGD estimator.

%If the loss function $\ell(x;\theta)$ is Lipschitz in 

In order to obtain a first order stability bound for SGD, we consider the following smoothness and convexity assumptions of the objective function. 
\begin{assumption}[Smoothness and convexity of the objective function]\label{asm:sgd1}
For all $x$,
\begin{enumerate}
    \item[(a)] $\psi(\cdot;x)$ is $\gamma$-strongly convex and L-Lipschitz 
    \item[(b)] $\dot\psi(\cdot;x)$ is $\beta$-Lipschitz
    %\item[(c)] $\psi(\cdot;x)$ is $\gamma$-strongly convex 
\end{enumerate}
\end{assumption}
These  assumptions are standard in the learning-theoretic literature, and have been considered in \cite{hardt2016train} to establish a similar first order stability result. 

We now present our first order stability bound for SGD.
\begin{proposition}[First order stability of SGD]\label{prop:sgd1}
    Under \Cref{asm:sgd1}, if $\alpha_t = t^{-a}\beta^{-1}$ for some $a \in (0,1)$ and $\frac\gamma\beta\geq\frac{a(1-a)}{1-2^{-(1-a)}}\frac{\log n}{n^{1-a}}$, then
    \[
        \left\Vert \nabla_i \hat\theta_n \right\Vert \leq \frac{2L}{\beta}n^{-a}
    \]
\end{proposition}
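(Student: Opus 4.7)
The plan is to track the per-iterate displacement $\delta_t := \|\hat\theta_t - \hat\theta_t^i\|$ between the SGD trajectories on the original data and the data with $X_i$ replaced by $X_i'$, and show that it grows by a single burst at step $i$ and then contracts for the remaining $n-i$ steps.

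First I would observe that for $t < i$ the two trajectories see identical data and identical updates, so $\hat\theta_t = \hat\theta_t^i$ and $\delta_t = 0$. At step $t=i$, the two updates differ only in the data point fed in: $\hat\theta_i - \hat\theta_i^i = -\alpha_i\bigl(\dot\psi(\hat\theta_{i-1};X_i) - \dot\psi(\hat\theta_{i-1};X_i')\bigr)$. Since \Cref{asm:sgd1}(a) implies $\|\dot\psi(\cdot;x)\| \leq L$ for all $x$ (from $L$-Lipschitz continuity of $\psi$), the triangle inequality gives the seed bound $\delta_i \leq 2L\alpha_i = (2L/\beta)\, i^{-a}$.

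For $t \geq i$, both trajectories use the same $X_{t+1}$, and the update becomes a common gradient step applied to the two iterates. Here I would invoke the standard contraction lemma for gradient descent on $\gamma$-strongly convex, $\beta$-smooth functions: for any step size $\alpha \leq 2/(\gamma+\beta)$ (which holds automatically since $\alpha_{t+1} \leq 1/\beta$), the map $\theta \mapsto \theta - \alpha\dot\psi(\theta;X_{t+1})$ is $(1 - \alpha\gamma)$-Lipschitz. Applying this to the pair $(\hat\theta_t,\hat\theta_t^i)$ yields $\delta_{t+1} \leq (1 - \alpha_{t+1}\gamma)\,\delta_t$, and iterating from $t=i$ to $t=n-1$ gives
\begin{equation*}
\delta_n \;\leq\; \frac{2L}{\beta}\, i^{-a}\prod_{t=i+1}^{n}\bigl(1 - \gamma\,\alpha_t\bigr)
\;\leq\; \frac{2L}{\beta}\, i^{-a}\exp\!\left(-\frac{\gamma}{\beta}\sum_{t=i+1}^{n} t^{-a}\right),
\end{equation*}
using $1-x \leq e^{-x}$.

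It then remains to show that the exponential factor compensates for the worse seed at small $i$, so that $i^{-a}\exp(-(\gamma/\beta)\sum_{t=i+1}^n t^{-a}) \leq n^{-a}$ uniformly in $i \in [n]$. Taking logarithms reduces this to $a\log(n/i) \leq (\gamma/\beta)\sum_{t=i+1}^n t^{-a}$, and a standard integral bound gives $\sum_{t=i+1}^n t^{-a} \geq (n^{1-a}-i^{1-a})/(1-a)$. I would differentiate the resulting right-hand-side-minus-left-hand-side in $i$, show its maximum on $[1,n]$ is attained in the interior (at a point of order $n/2$ under the given regime, which explains the $1 - 2^{-(1-a)}$ factor), and verify that the hypothesized lower bound $\gamma/\beta \geq \frac{a(1-a)}{1-2^{-(1-a)}}\cdot\frac{\log n}{n^{1-a}}$ is exactly what makes the worst-case $i$ satisfy the inequality. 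The main obstacle is this last calibration step: getting the right extremal $i$ and matching the stated constant cleanly, since the naive worst-case choice $i=1$ gives a slightly weaker constant than what the proposition advertises.
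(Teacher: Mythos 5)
Your proposal is correct and is essentially the paper's argument: the same seed bound $\delta_i\le 2L\alpha_i=\tfrac{2L}{\beta}i^{-a}$ at the perturbed step, the same contraction of the gradient map under strong convexity and smoothness (the paper invokes Lemma 3.7 of \cite{hardt2016train}, whose contraction factor $1-\tfrac{2\beta\gamma}{\beta+\gamma}\alpha_t$ implies your weaker $1-\gamma\alpha_t$ since $\gamma\le\beta$), and the same integral comparison $\sum_{t=i+1}^n t^{-a}\ge\tfrac{n^{1-a}-(i+1)^{1-a}}{1-a}$. The only place you diverge is the final calibration, which you flag as the obstacle: the paper does not optimize over $i$ at all, but simply splits into $i>n/2$ (where $i^{-a}\le 2^a n^{-a}$ and the contraction is discarded) and $i\le n/2$ (where $i^{-a}\le 1$ and the contraction over the last $n/2$ steps contributes $\exp[-\tfrac{\gamma}{\beta}\tfrac{(1-2^{-(1-a)})n^{1-a}}{1-a}]\le n^{-a}$ under the stated hypothesis) --- that case split, not an interior extremum, is where $1-2^{-(1-a)}$ comes from. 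Your uniform route also closes: the deficit $g(i)=\tfrac{\gamma}{\beta}\tfrac{n^{1-a}-i^{1-a}}{1-a}-a\log(n/i)$ has $g'(i)=i^{-1}(a-\tfrac{\gamma}{\beta}i^{1-a})$, so $g$ increases then decreases on $[1,n]$ and its minimum is at an endpoint; $g(n)=0$ and $g(1)\ge0$ already under the weaker condition $\tfrac{\gamma}{\beta}\ge\tfrac{a(1-a)\log n}{n^{1-a}-1}$, which the stated hypothesis implies for $n\ge2$. Note also that neither route literally delivers the advertised constant $2L/\beta$ (the paper's $i>n/2$ case yields $2^a\cdot\tfrac{2L}{\beta}n^{-a}$ and its proof concludes only with $\lesssim$), so your worry about matching the constant applies equally to the paper's own proof.
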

%\Cref{asm:sgd1} has been considered in \cite{hardt2016train} to establish a similar first order stability result for SGD.  \Cref{prop:sgd1} differs from the results in \cite{hardt2016train} by considering a different decay speed of the step size.  \cite{austern2020asymptotics} used the same assumptions to establish the first order stability for global optimum of M-estimators, but did not consider any optimization errors.
This proposition differs from the results in \cite{hardt2016train} by considering a different decay speed of the step size.  Also, \cite{austern2020asymptotics} used the same assumptions to establish the first order stability for the global optimum of M-estimators, but did not consider any optimization errors. 

Before moving on to second order stability, we  provide an example in which \Cref{asm:sgd1} is satisfied.
\begin{example}
Consider ridge regression: $x=(y,z)\in \mathbb B(R_x)$, and $\theta\in \mathbb B(R_\theta)$, where $\mathbb B(R)$ denotes a centered Euclidean ball with radius $R$. Consider objective function
$$
\psi(\theta;x) = \frac{1}{2}(y-z^T\theta)^2+\frac{1}{2}\lambda \|\theta\|^2\,.
$$
Then $\psi$ is $\gamma$-strongly convex with $\gamma=\lambda$, $L$-Lipschitz with $L=R_x^2(1+R_\theta)+\lambda R_\theta$, and $\beta$-smooth with $\beta=R_x^2+\lambda$.

The additional condition $\gamma/\beta \gg \frac{\log n}{n^{1-a}}$ can still hold if $R_x$, $R_\theta$, and $\lambda^{-1}$ vary as small polynomials of $n$.
\end{example}

Next, in order to provide a bound on the second order stability, we need to control the ``difference in difference'' of the loss functions, which inevitably involves the second order Taylor expansion and would require regularity of the Hessian of $\psi$.
\begin{assumption}[Lipschitz Hessian]\label{asm:sgd2}
For all $x$, $\ddot\psi(\cdot;x)$ is $L_2$-Lipschitz.
\end{assumption}
To the best of our knowledge, \Cref{asm:sgd2} has not appeared in the literature of stability of M-estimators. It is required to establish second order stability and we believe this condition is also needed for the second order stability proof in \cite{austern2020asymptotics}. 

Now we are ready to present our second order stability result for SGD.
\begin{proposition}[Second order stability of SGD]\label{prop:sgd2}
    Under  \Cref{asm:sgd2} and the same conditions as \Cref{prop:sgd1},
    \[
        \left\Vert \nabla_j \nabla_i \hat\theta_n \right\Vert \leq c(L, L_2, a, \gamma, \beta)n^{-2a} \log n\,.
    \]
\end{proposition}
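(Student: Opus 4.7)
Since $\nabla_j\nabla_i = \nabla_i\nabla_j$, assume without loss of generality that $i<j$. For $S\subseteq\{i,j\}$ let $\hat\theta_t^S$ denote the SGD iterate at step $t$ produced by running the same update rule on the dataset in which the entries indexed by $S$ have been replaced by their iid copies, so $\hat\theta_t^\emptyset = \hat\theta_t$. Then
\[
\nabla_j\nabla_i\hat\theta_n = \Delta_n, \qquad \Delta_t := \hat\theta_t^\emptyset - \hat\theta_t^{\{i\}} - \hat\theta_t^{\{j\}} + \hat\theta_t^{\{i,j\}}.
\]
For $t<j$ neither trajectory containing $j$ has yet processed the resampled $X_j'$, so $\hat\theta_t^{\{j\}}=\hat\theta_t^\emptyset$ and $\hat\theta_t^{\{i,j\}}=\hat\theta_t^{\{i\}}$, making $\Delta_t=0$. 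Expanding the SGD step at $t=j$ using these identities at $t=j-1$ gives
\[
\Delta_j = -\alpha_j\bigl\{[\dot\psi(\hat\theta_{j-1}^\emptyset;X_j) - \dot\psi(\hat\theta_{j-1}^{\{i\}};X_j)] - [\dot\psi(\hat\theta_{j-1}^\emptyset;X_j') - \dot\psi(\hat\theta_{j-1}^{\{i\}};X_j')]\bigr\},
\]
so $\beta$-smoothness combined with the first-order stability bound on $\hat\theta_{j-1}^\emptyset-\hat\theta_{j-1}^{\{i\}}$ (a uniform-in-$t$ version of \Cref{prop:sgd1}, obtained from the same argument) yields $\|\Delta_j\|\lesssim L\alpha_j n^{-a}$.

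For $t\ge j$ every trajectory consumes the same $X_{t+1}$. Writing $g_t(\theta)=\dot\psi(\theta;X_{t+1})$, $a_t=\hat\theta_t^\emptyset-\hat\theta_t^{\{i\}}$, $b_t=\hat\theta_t^{\{j\}}-\hat\theta_t^{\{i,j\}}$, and $c_t=\hat\theta_t^{\{i\}}-\hat\theta_t^{\{i,j\}}$, the integral form of Taylor's theorem produces averaged Hessians $H_t^{(1)}, H_t^{(2)}$ such that $g_t(\hat\theta_t^\emptyset)-g_t(\hat\theta_t^{\{i\}})=H_t^{(1)}a_t$ and $g_t(\hat\theta_t^{\{j\}})-g_t(\hat\theta_t^{\{i,j\}})=H_t^{(2)}b_t$. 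Since $a_t-b_t=\Delta_t$, the SGD recursion rearranges as
\[
\Delta_{t+1} = (I-\alpha_{t+1}H_t^{(1)})\Delta_t - \alpha_{t+1}(H_t^{(1)}-H_t^{(2)})b_t.
\]
The first piece is a contraction by $1-\alpha_{t+1}\gamma$ through the standard strong-convex/$\beta$-smooth calculation, while \Cref{asm:sgd2} yields $\|H_t^{(1)}-H_t^{(2)}\|\le L_2(\|c_t\|+\|\Delta_t\|/2)$, and \Cref{prop:sgd1} applied to the pairs defining $b_t$ and $c_t$ gives $\max(\|b_t\|,\|c_t\|)\lesssim L n^{-a}/\beta$.

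Combining these bounds, once $n$ is large enough that the $L_2 L n^{-a}$ term is dominated by $\gamma$, we obtain the linear recursion
\[
\|\Delta_{t+1}\|\le (1-\alpha_{t+1}\gamma/2)\|\Delta_t\| + C\alpha_{t+1}n^{-2a}, \qquad C=C(L,L_2,\beta).
\]
Unrolling from $t=j$, the initial-condition contribution decays via $\prod_{s=j+1}^n(1-\alpha_s\gamma/2)\le \exp(-(\gamma/2)\sum_s\alpha_s)$ and the forced contribution collapses by a standard discrete-convolution estimate; the lower bound on $\gamma/\beta$ imposed in \Cref{prop:sgd1} makes the effective contraction window have length $\Theta(\log n)$, which is where the extra $\log n$ factor in the stated bound originates.

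The principal obstacle is producing the mixed-difference cancellation in the $\Delta_{t+1}$ recursion: a bare $\beta$-smoothness bound $\|H_t^{(1)}-H_t^{(2)}\|\le 2\beta$ produces a drift of order $\alpha_{t+1}\beta\|b_t\|=O(\alpha_{t+1}n^{-a})$ and reproduces only the first-order rate. \Cref{asm:sgd2} replaces the constant $2\beta$ by $L_2\|c_t\|$, combining the two independent first-order decays $\|c_t\|$ and $\|b_t\|$ into a product that drives the $n^{-2a}$ rate; this is exactly the point where Lipschitz Hessian is indispensable and the argument sketched in \cite{austern2020asymptotics} needs the added hypothesis.
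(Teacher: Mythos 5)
Your architecture is exactly the paper's: $\Delta_t=0$ for $t<j$, a base step at $t=j$ controlled by $\beta$-smoothness times a first-order difference, and for $t\ge j$ the recursion $\Delta_{t+1}=(I-\alpha_{t+1}H_t^{(1)})\Delta_t-\alpha_{t+1}(H_t^{(1)}-H_t^{(2)})b_t$, with the contraction coming from strong convexity and \Cref{asm:sgd2} converting $H_t^{(1)}-H_t^{(2)}$ into a product of two first-order differences. Your closing remark about why a bare $\beta$-smoothness bound only reproduces the first-order rate, and the bound $\|H_t^{(1)}-H_t^{(2)}\|\le L_2(\|c_t\|+\|\Delta_t\|/2)$ with the $\|\Delta_t\|$ piece absorbed into the contraction, also match the paper's reasoning.

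The genuine gap is the claim that $\max(\|b_t\|,\|c_t\|)\lesssim Ln^{-a}/\beta$ uniformly in $t\ge j$ (and, likewise, $\|\Delta_j\|\lesssim L\alpha_j n^{-a}$). \Cref{prop:sgd1} is a statement about the final iterate $\hat\theta_n$ only; the intermediate-time version (\Cref{lem:first_order_err} in the paper) gives $\|\nabla_i\hat\theta_t\|\le \frac{2L}{\beta}i^{-a}\exp\left[-c_{a,\beta,\gamma}((t+1)^{1-a}-(i+1)^{1-a})\right]$, which for $t$ shortly after $i$ is of order $i^{-a}$ and can exceed $n^{-a}$ by an arbitrary factor when $i\ll n$. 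With your uniform bound the forced term of the linear recursion telescopes to $Cn^{-2a}/\gamma$ with no logarithm at all, which is a sign that this step is doing illegitimate work. The real difficulty of the proof sits exactly here: one must carry the time-dependent bounds $i^{-a}e^{-c(t^{1-a}-(i+1)^{1-a})}$ and $j^{-a}e^{-c(t^{1-a}-(j+1)^{1-a})}$ through the unrolled recursion and estimate $\sum_{k=j+1}^{n}\exp\left[-c'((n+1)^{1-a}-(k+1)^{1-a})\right]R_{i,j,k}$, splitting into the cases $i\le n-c_2 n^a\log n$ (where the outer exponential prefactor makes the whole sum $O(n^{-2})$) and $i> n-c_2 n^a\log n$ (where $i^{-a},j^{-a}\asymp n^{-a}$ but the sum has $\asymp n^a\log n$ terms each of size $\asymp n^{-a}\cdot n^{-2a}$). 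The $\log n$ in the statement originates from this window of length $c_2 n^a\log n$, not from a contraction window of length $\Theta(\log n)$ as you suggest. To complete your argument you would need to replace the uniform $n^{-a}$ claim with the exponential-decay bounds and perform this summation.
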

The constant $c(L,L_a,a,\gamma,\beta)$ is explicitly tracked in the proof. The rate of $n^{-2a}$ is expected, while the additional $\log n$ factor arises in the proof when using a union bound on exponential tails.

For ridge regression, the additional requirement of \Cref{asm:sgd2} is trivial, as $\ddot\psi(\cdot;x)$ is constant.  Here we consider a less trivial example.
\begin{example}
Consider logistic ridge regression: $x=(y,z)\in \mathbb B(R_x)$, and $\theta\in \mathbb B(R_\theta)$.
$$
\psi(\theta;z) = -yz^T\theta+ \log(1+e^{z^T \theta}) + \lambda \|\theta\|^2\,.
$$

Then $\psi$ is $\gamma$-strongly convex with $\gamma=2\lambda$. $\psi$ is $L$-Lipschitz with $L=R_x^2 + \frac{1}{R_\theta} \log(1 + e^{R_x R_\theta}) + \lambda R_\theta$, and $\beta$-smooth with $\beta= \frac{R_x }{R_\theta  } + \lambda$. $\ddot \psi(\theta)$ is $L_2$-Lipschitz with $L_2 = \frac{R_x^2 }{4 R_\theta }$.

The required $o(n^{-3/2})$ rate can be achieved when $a>3/4$ and $R_x$, $R_\theta$, $\lambda^{-1}$ grow slowly enough as $n$ increases.
\end{example}

\paragraph{From parameter stability to loss stability.}  The first and second order stability for SGD established in \Cref{prop:sgd1,prop:sgd2} are for the estimated parameter $\theta$.  The corresponding stability conditions 
for the loss function $\ell(x;\theta)$ can be derived under suitable smoothness conditions on $\ell$.  In particular, if we assume that for each $x$, both $\ell(x;\theta)$ and $\dot\ell(x;\theta)$ are Lipschitz, then, using 
a standard first order Taylor expansion of $\ell(x;\theta)$ with respect to $\theta$, we have
\begin{align*}
    \left|\nabla_i \ell(x;\hat\theta_n)\right|\le C_1 \left\|\nabla_i\hat\theta_n\right\|\,,~~~~
    \left|\nabla_j\nabla_i \ell(x;\hat\theta_n)\right|\le  C_2 \left(\left\|\nabla_j\nabla_i\hat\theta_n\right\|+\sup_i\left\|\nabla_i\hat\theta_n\right\|^2\right)\,,
\end{align*}
where $C_1$ is a constant depends on the Lipschitz constant of $\ell(x;\cdot)$, and $C_2$ is a constant depends on the Lipschitz constants of $\ell(x;\cdot)$ and $\dot\ell(x;\cdot)$.  As a result, when $a$ is large enough and the Lipschitz constants in $\psi(x;\cdot)$, $\ell(x;\cdot)$ and their derivatives do not grow too fast, the first and second order loss stability conditions can be satisfied.

\subsection{Stability of difference loss: a nonparametric regression example}
Establishing the stability conditions for difference losses as required in \Cref{pro:cvc_coverage} is more delicate, mainly because \Cref{ass:lstability}(c) will be violated. For the Gaussian comparison argument to work, one needs to re-standardize the loss functions by considering
$$
\frac{\nabla_i(\ell_r-\ell_s)}{\sqrt{{\rm Var}(\ell_r-\ell_s)}}~~~~\text{and}~~~~
\frac{\nabla_j\nabla_i(\ell_r-\ell_s)}{\sqrt{{\rm Var}(\ell_r-\ell_s)}}
$$ 
in \Cref{ass:lstability}(a) and \ref{ass:lstability}(b), respectively.  In general, the variance of $\ell_r-\ell_s$ depends on the bias-variance decomposition of the two models, which may cancel with each other at different rates.  A general treatment of such a variance of difference loss functions seems to be a challenging task, if at all possible, and we will not pursue this in the current paper.  Instead, we use a simple non-parametric example to explicitly quantify the variance and stability of difference loss functions.
% Additionally, in order to provide an example for which the loss differences satisfy \Cref{ass:lstability}, we will consider under-parameterized linear models with decreasing coefficient magnitudes. This specific regime is chosen so to ensure that second moment of the loss difference vanishes at a rate slower than $n^{-1/2}$, as acknowledged in \Cref{rem:lossdiff}.

In particular, consider the following linear regression model with an infinite dimensional feature vector.
Let $(X_i:1\le i\le n)$ be iid vectors with $X_i=(Y_i,Z_i)\in\mathbb R^1\times\mathbb R^{\infty}$, such that
\begin{equation}\label{eq:regression_example}
Y_i=\sum_{j=1}^\infty \beta_j Z_{ij} +\epsilon_i
\end{equation}
where
\begin{enumerate}
    \item $\epsilon_i$ is independent of $Z_i$, has mean zero, finite variance $\sigma_\epsilon^2$, and is $1$-SW;
    \item  $Z_{ij}$ is mean zero, has unit variance, is $1$-SW, and satisfies $\mathbb E(Z_{ij}|Z_{i1},...,Z_{i,j-1})=0$ for all $j$;
    \item $\beta_j\asymp j^{-\frac{1+a}{2}}$ for some $a>0$ for all $j$\,.
\end{enumerate}
The zero mean and unit variance will simplify the estimator.  The sub-Weibull conditions will be carried over to the difference of the loss functions.  The rate of $\beta_j$ allows us to track the bias and variance precisely.

For each $j$, we can naturally estimate $\beta_j$ by
$$
\hat\beta_j=\frac{1}{n}\sum_{i=1}^n Y_i Z_{ij}\,.
$$
For each candidate model $r$, let $J_r=J_{r,n}$ be a positive integer such that $J_{n,r}\rightarrow\infty$ at a speed that is specified by the model $r$.  For example $J_{r,n}=n^{\alpha_r}$, where $\{\alpha_r: r\in\mathcal M\}$ is a collection of positive numbers corresponding to different increase speed of $J_{r,n}$.  The estimates we consider are truncated sequence estimates defined by integers $J_r$,
\begin{align*}
    & \hat f_r(Z_0) = \sum_{j=1}^{J_r} \hat \beta_j Z_{0j}\,,~~\forall~~r\in\mathcal M\,.
\end{align*}

\begin{proposition}[Loss difference stability in the nonparametric regression example]\label{prop:lossstab}
Under the regression model \eqref{eq:regression_example} and the conditions for $\epsilon$, $Z$, $\beta$,
if $J_r\le c J_s$ for some constant $c\in(0,1)$, then
\begin{itemize}
    \item[(a)] the first order stability required in \Cref{pro:cvc_coverage} holds when $J_s J_r^{a/2} = o(n)$;
    \item[(b)] the second order stability required in \Cref{pro:cvc_coverage} holds when $J_s J_r^{a/2} = o(\sqrt{n})$.
\end{itemize}
\end{proposition}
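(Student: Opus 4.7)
The plan is to exploit the factorization
\begin{equation*}
\ell_r(X_0;\mathbf X_{-1}) - \ell_s(X_0;\mathbf X_{-1}) = A\cdot B\,,
\end{equation*}
where $A := \hat f_s(Z_0)-\hat f_r(Z_0) = \sum_{j=J_r+1}^{J_s}\hat\beta_j Z_{0j}$ is the ``gap'' between the two truncations and $B := 2Y_0-\hat f_r(Z_0)-\hat f_s(Z_0)$ is the sum of the two residuals. Writing $B = 2\epsilon_0 + \tilde B(Z_0)$ and using that $\epsilon_0$ is independent of both $Z_0$ and $\mathbf X_{-1}$, I first obtain the variance lower bound
\begin{equation*}
\mathrm{Var}(\ell_r-\ell_s\mid\mathbf X_{-1}) \ge \mathrm{Var}(2\epsilon_0 A\mid \mathbf X_{-1}) = 4\sigma_\epsilon^2\sum_{j=J_r+1}^{J_s}\hat\beta_j^2\,.
\end{equation*}
Taking expectations, using $\mathbb E[\hat\beta_j^2]=\beta_j^2+O(n^{-1})$ with $\beta_j^2\asymp j^{-(1+a)}$ together with $J_r\le cJ_s$, the bias term $\sum_{j>J_r}\beta_j^2\asymp J_r^{-a}$ dominates the estimation-noise term $(J_s-J_r)/n$ in the regime of interest, which yields $\sqrt{\sigma^{(r)}_{ss}}\gtrsim J_r^{-a/2}$.

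The second ingredient is that each $\hat\beta_k = \tilde n^{-1}\sum_i Y_i Z_{ik}$ is \emph{linear} in the training sample, so $\nabla_i\hat\beta_k = \tilde n^{-1}(Y_iZ_{ik}-Y_i'Z_{ik}')$ is a single sub-Weibull term of scale $O(n^{-1})$, while $\nabla_j\nabla_i\hat\beta_k\equiv 0$ for $i\neq j$. This propagates to $\nabla_j\nabla_iA=\nabla_j\nabla_iB=0$, so a direct expansion of $AB-(AB)^i-(AB)^j+(AB)^{ij}$ collapses to the cross-term identity
\begin{equation*}
\nabla_j\nabla_i(AB) = (\nabla_iA)(\nabla_jB)+(\nabla_jA)(\nabla_iB)\,,
\end{equation*}
while $\nabla_i(AB) = A\,\nabla_iB+(\nabla_iA)\,B^i$.

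The remaining step is to quantify the sub-Weibull scales of $\nabla_iA$ and $\nabla_iB$. The key object is the martingale bilinear form
\begin{equation*}
\nabla_iA = \tilde n^{-1}\bigl[Y_i\sum_{j\in(J_r,J_s]}Z_{ij}Z_{0j}- Y_i'\sum_{j\in(J_r,J_s]}Z_{ij}'Z_{0j}\bigr]\,.
\end{equation*}
Applying a Hanson--Wright-style sub-Weibull concentration to the inner sum $\sum_{j\in(J_r,J_s]}Z_{ij}Z_{0j}$ (invoking $\mathbb E[Z_{ij}\mid Z_{i,<j}]=0$ and the $1$-sub-Weibull tail of the individual coordinates) and then combining with the sub-Weibull tail of $Y_i$ gives scale $n^{-1}\sqrt{J_s-J_r}\asymp n^{-1}\sqrt{J_s}$. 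The same reasoning gives $\nabla_iB$ sub-Weibull of scale $n^{-1}\sqrt{J_s}$ via its $\nabla_i\hat f_s$ component. Combined with $|A|\asymp\sqrt{J_r^{-a}+J_s/n}$ and $|B|=O(1)$, the first-order difference $\nabla_i(AB)$ then has sub-Weibull scale $n^{-1}\sqrt{J_s}$ (the $(\nabla_iA)B^i$ term dominates) and the second-order difference has scale $n^{-2}J_s$.

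Dividing by $\sqrt{\sigma^{(r)}_{ss}}\gtrsim J_r^{-a/2}$, the standardized first- and second-order differences have sub-Weibull scales $n^{-1}\sqrt{J_s}\,J_r^{a/2}$ and $n^{-2}J_s\,J_r^{a/2}$, respectively. Matching these to $n^{-1/2}\lstability$ and $n^{-3/2}\lstability$ with $\lstability=o(1)$ yields exactly the rate conditions claimed in parts (a) and (b). The main technical obstacle will be the Hanson--Wright-style sub-Weibull tail bound for the martingale bilinear form $\sum_{j\in(J_r,J_s]}Z_{ij}Z_{0j}$: although the conditional variance $\sum_j Z_{0j}^2$ is $O(J_s-J_r)$ by orthogonality of the martingale increments, promoting this to a sub-Weibull tail requires an Azuma-type concentration for martingales with sub-Weibull increments, and the bookkeeping needed to track the scaling through the outer product with the unbounded sub-Weibull factor $Y_i$ (and through the analogous computation for $\nabla_iB$) will be the most delicate part.
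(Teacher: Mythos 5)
Your proposal is correct and follows essentially the same route as the paper's proof: the same factorization of $\ell_r-\ell_s$ into the truncation gap times the sum of residuals, the same variance lower bound via conditioning on $\epsilon_0$, the same use of linearity of $\hat\beta_j$ to collapse the second-order difference into cross-products of first-order differences, and the same martingale-in-$j$ sub-Weibull bound (the paper's \Cref{lem:MG-subW} is precisely the Azuma-type tool your ``Hanson--Wright-style'' step requires, combined with closure of sub-Weibull tails under products). One minor caveat, shared with the paper's own derivation: dividing the first-order scale $\sqrt{J_s}/n$ by $\sqrt{\sigma^{(r)}_{ss}}\gtrsim J_r^{-a/2}$ and comparing to $n^{-1/2}$ actually yields $J_sJ_r^{a}=o(n)$ rather than the stated $J_sJ_r^{a/2}=o(n)$, so the assertion that the rates match ``exactly'' in part (a) deserves a second look, though part (b) comes out as claimed.
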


\begin{remark}
    Under the specified decay speed $\beta_j\asymp j^{-1-a}$, the optimal truncation $J^*$ that minimizes the risk under the squared prediction loss is $J^*\asymp n^{\frac{1}{1+a}}$.  If $J_s\gg J_r\asymp J^*$, the the condition $J_sJ_r^{a/2}=o(\sqrt{n})$ cannot be satisfied.  This implies the stability condition is harder to satisfy if the estimate from model $s$ has large variance.  On the other hand, if $J^*\asymp J_s$, then the condition  $J_sJ_r^{a/2}=o(\sqrt{n})$ can be satisfied when $a>1$ and $J_r\lesssim n^{(a-1)/(a+a^2)}$.  Again, this agrees with the intuition that cross-validation is able to screen out inferior models that have large bias \cite{yang2007consistency}. 
\end{remark}

\section{On deterministic centering}\label{sec:thm3}
%!TEX root = ./hd_cv_clt.tex
So far, we have focused on the high dimensional Gaussian comparison of cross-validated risk with random centering, where the mean vector $\tilde{\mathbf R}$ is data-dependent.   This leads to the following question: can we establish Gaussian comparison results with fixed centering?  It is natural to expect the fixed centering to be $\mathbf R^*=\mathbb E(\hat{\mathbf R}_{\rm cv})$.  Also, the  corresponding scaling should be based on the total variance $\mathbb E \left[\boldsymbol{\ell}(X_0;\mathbf X_{-1})\boldsymbol{\ell}(X_0;\mathbf X_{-1})^T\right]-\mathbf R^*{{\mathbf R}^*}^T$, which in addition to the variance term $\Sigma$ considered in the random centering case above, must also take into account the variability caused by the randomness of $\mathbf R(\mathbf X_{-1})$.  Such a fixed centering central limit theorem for cross-validated risks has been studied in \cite{austern2020asymptotics} in the low-dimensional case.  Our development here extends their result to the high-dimensional case with a more streamlined proof.

\subsection{Risk stability}
In order to study the randomness in the risk function $R_r(\mathbf X_{-1})$,
we need the following stability conditions on the risk functions.
\begin{assumption}[Risk stability]\label{ass:risk_stab}
There exists a constant $\rstability\in\mathbb R^+$ such that % and $\rstabilitya\in(0,1)$
$n \nabla_i R_r(\mathbf X_{-1})$ is $\rstability$-SW for all $i$.
% \begin{enumerate}
% 	\item $n \nabla_i R_r(\mathbf X_{-1})$ is $\rstability$-SW for all $i$, and
% 	\item $n^{3/2}\nabla_j\nabla_i R_r(\mathbf X_{-1})$ is $\rstabilitya$-SW for $i\neq j$.
% \end{enumerate}
\end{assumption}
A remarkable difference between the risk stability and loss stability is in the scaling factors.
For the first order differencing operators $\nabla_i$, the scaling factor of $n$ instead of $\sqrt{n}$ makes the risk stability apparently harder to control than the loss stability.  This is also considered and briefly discussed in \cite{austern2020asymptotics} in the low-dimensional case.  Here, we give a detailed explanation of this key condition.
 At first, it seems unreasonable to assume that $\rstability$ is very small, as $\nabla_i R_r(\mathbf X_{-1})$ generally should not be smaller than $1/n$. However, a closer inspection suggests that the risk function (taking expectation of the loss function $\ell_r(X_0,\mathbf X_{-1})$ over the evaluating point $X_0$) is usually much more stable than the loss function itself, as taking a conditional expectation usually increases stability. In fact, such an increase of stability can be quite substantial.
For example, assume that the loss function takes a parametric form: $\ell(x_0;x_1,...,x_{\tilde n})=\ell(x_0;\hat\theta)$ where $\hat\theta=\hat\theta(x_1,...,x_{\tilde n})$ is a fitted parameter from the input data $(x_1,...,x_{\tilde n})$. Then $$\nabla_i R(\hat\theta)\approx \left(\frac{d R}{d\theta}\bigg|_{\hat\theta}\right) \nabla_i\hat\theta\,,$$
which should be much smaller than $\nabla_i \hat\theta$ if $R(\theta)\coloneqq \mathbb E\ell(X_0;\theta)$ is flat at $\hat\theta$.  This is usually the case when $\hat\theta$ is in  a small neighborhood of the optimal parameter value with minimum risk $\theta^*$.

Furthermore, we remark that our theoretical development does not require $\rstability$ to vanish asymptotically. Instead, we only need $\rstability$ to be dominated by other vanishing terms such as $\lstability$ and $1/\sqrt{n}$.
% On the other hand, our theory does require $\rstabilitya$ to vanish as $(n,p)\rightarrow\infty$ in order for Gaussian comparison to hold.  This is less restrictive since taking one difference typically reduces the magnitude of the random variable by a factor between $n^{1/2}$ and $n$.
\subsection{Gaussian comparison with deterministic centering}
Finding the covariance matrix for deterministic centering starts by identifying the contribution of randomness from each single sample point.  We start by writing
 $$n\hat R_{{\rm cv},r}=\sum_{i=1}^n\ell_r(X_i;\mathbf X_{-v_i})\,.$$  
 The part in the above sum that involves $X_i$ is
\begin{align}
\ell_r(X_i;\mathbf X_{-v_i})+\sum_{j\notin I_{v_i}}\ell_r(X_j;\mathbf X_{-v_j})\,.\label{eq:Xi_in_Rcv}
\end{align}
It is clear that $X_i$ plays two different roles in $\hat R_{{\rm cv},r}$: (i) as the evaluation point in $\ell_r(X_i;\mathbf X_{-v_i})$, (ii)
as one of the $\tilde n$ fitting sample points in each of $\ell_r(X_j;\mathbf X_{-v_j})$ for $j\neq I_{v_i}$.
The randomness contributed by $X_i$ as an evaluating point should be captured by the variability of the average loss function
$$
\bar\ell_r(X_1) = \mathbb E\left[\ell_r(X_1;\mathbf X_{-1})|X_1\right]\,,
$$
and the randomness contributed by $X_1$ as a fitting sample point should be captured in the function
$$
\bar R_r(X_1) = \mathbb E\left[R(\mathbf X_{-V})|X_1\right]\,.
$$

Therefore, let $\boldsymbol{\Phi}=(\phi_{rs}:1\le r,s\le p)$ be the covariance matrix given by
\begin{align}
\phi_{rs}\coloneqq  & {\rm Cov}\left[\bar\ell_r(X_1)+\tilde n \bar R_r(X_1)\,,~\bar\ell_s(X_1)+\tilde n \bar R_s(X_1)\right]\,~~1\le r,s\le p\,.\label{eq:phi_rs}
\end{align}
% with the following variance term notation for convenience
% \begin{align*}
% \phi_{r}^2\coloneqq  & \phi_{rr}\,.
% \end{align*}
We assume that the marginal variance terms are bounded and bounded away from $0$, leading to the following assumption which is analogous to \Cref{ass:condition_l}(b).
\begin{assumption}\label{ass:phi_bound}
	There exist positive constants $\underline\phi$ and $\bar \phi$ such that $\underline\phi\le \phi_{rr} \le\bar\phi$ for each $r\in[p]$.
\end{assumption}
Using this assumption on the marginal variance terms, the symmetry and moment condition on $\ell_r$, and all previous stability assumptions on $\ell_r$ and $R_r$, we state the following result for the Gaussian comparison with fixed centering.
% In the following theorem we will proceed with the requirement of $\lstability\le 1$ for notational simplicity and without loss of generality, because  the approximation error bound is meaningless if $\lstability>1$.
\begin{theorem}[Deterministic Gaussian Comparison]\label{thm:thm3}
	Assume  \Cref{ass:condition_l,ass:lstability,ass:risk_stab,ass:phi_bound} hold, then we have
	\begin{align*}
		&\sup_{z\in\mathbb R}\left|\mathbb P\left(\sqrt{n}\max (\hat{\mathbf R}_{\rm cv}-\mathbf R^*)\le z\right)-\mathbb P\left(\max \mathbf Y\le z\right)\right|\\
		&\le \tilde O \left([\lstability(1+\rstability)]^{1/3} + n^{-1/8}(1+\rstability)^{3/4}\right)
	\end{align*}
for $\mathbf Y\sim N(0,\boldsymbol{\Phi})$.
\end{theorem}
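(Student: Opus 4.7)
The plan is to reduce \Cref{thm:thm3} to a high-dimensional CLT for a sum of iid $p$-vectors via a first-order ANOVA decomposition of $n(\hat R_{{\rm cv},r}-R_r^*)$, using the loss and risk stability conditions to control the higher-order remainder uniformly in $r\in[p]$. Define the first-order Hoeffding projection and the remainder
$$W_{r,i}=\bigl[\bar\ell_r(X_i)-R_r^*\bigr]+\tilde n\bigl[\bar R_r(X_i)-R_r^*\bigr],\qquad E_r=n(\hat R_{{\rm cv},r}-R_r^*)-\sum_{i=1}^n W_{r,i}.$$
To verify that $W_{r,i}$ is the correct first-order projection, compute $\mathbb E[n\hat R_{{\rm cv},r}\mid X_i]-\mathbb E[n\hat R_{{\rm cv},r}]$ by partitioning $\sum_j\ell_r(X_j;\mathbf X_{-v_j})$ into three groups: $j=i$ (contributes $\bar\ell_r(X_i)-R_r^*$), $v_j=v_i$ with $j\neq i$ (contributes $0$), and $v_j\neq v_i$ (each of the $\tilde n$ such $j$ contributes $\bar R_r(X_i)-R_r^*$). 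By iid-ness of the $X_i$, the $p$-vectors $\mathbf W_i=(W_{r,i})_{r=1}^p$ are iid and mean zero, and $\mathrm{Cov}(\mathbf W_i)=\boldsymbol\Phi$ by the definition \eqref{eq:phi_rs}.

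Next we quantify the sub-Weibull scale of $\mathbf W_i$. The term $\bar\ell_r(X_i)-R_r^*$ is $O(1)$-SW by \Cref{ass:lstability}(c), since conditional expectation cannot enlarge the SW scale. The delicate piece is $\tilde n[\bar R_r(X_i)-R_r^*]$, which equals $\tilde n$ times the first-order ANOVA term of the symmetric function $R_r(\mathbf X_{-V})$. Under \Cref{ass:risk_stab}, an Efron-Stein bound gives this term $L^2$-norm $O(\rstability/n)$, and the SW tail of $\nabla_i R_r$ propagates (via Markov-type arguments) to a matching $\tilde O(\rstability/n)$ SW scale. Multiplying by $\tilde n$ yields $\tilde O(\rstability)$, so each $W_{r,i}$ is $\tilde O(1+\rstability)$-SW.

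The main technical effort is bounding $\max_r|E_r|$ uniformly in $r$. Writing $E_r=\sum_{|S|\ge 2}\sum_j (P_S\ell_r)(X_j;\mathbf X_{-v_j})$, a typical second-order piece depends on iterated loss differences $\nabla_i\nabla_j\ell_r(X_k;\mathbf X_{-v_k})$, which are $\tilde O(n^{-3/2}\lstability)$-SW by \Cref{ass:lstability}(b), and on iterated risk differences, whose scale is $\tilde O(\rstability/n^2)$-SW via \Cref{ass:risk_stab}. Summing these increments in SW norm over the $\binom{n}{2}$ index pairs and the $n$ sample terms, and union-bounding over $r\in[p]$, gives $\max_r|E_r/\sqrt n|=\tilde O\bigl(\lstability(1+\rstability)\bigr)$ with high probability. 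This bookkeeping parallels the martingale/stability decomposition in \cite{austern2020asymptotics}, but must be carried out in sub-Weibull norm rather than $L^2$ so that a single bound survives the union over $r$.

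Finally, apply the Chernozhukov-Chetverikov-Kato high-dimensional Gaussian comparison to $n^{-1/2}\sum_i\mathbf W_i$ (with $\tilde O(1+\rstability)$-SW coordinates and covariance $\boldsymbol\Phi$ whose diagonal is bounded away from $0$ by \Cref{ass:phi_bound}): this yields
$$\sup_z\Bigl|\mathbb P\bigl(\max_r n^{-1/2}\textstyle\sum_i W_{r,i}\le z\bigr)-\mathbb P(\max_r Y_r\le z)\Bigr|\le \tilde O\!\left(n^{-1/8}(1+\rstability)^{3/4}\right).$$
Combining this with the remainder bound via Gaussian anti-concentration --- choose a cutoff $\delta$ that balances the $\delta\sqrt{\log p}$ anti-concentration cost against the exceedance probability of $\max_r|E_r/\sqrt n|$, with the usual cube-root optimization producing $\delta\asymp\lstability(1+\rstability)$ --- yields the claimed $[\lstability(1+\rstability)]^{1/3}$ term. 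The main obstacle is the uniform-in-$r$ higher-order remainder bound; once that is established, the rest of the argument is largely bookkeeping given the stability inputs and the black-box CCK comparison.
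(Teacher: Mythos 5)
Your proposal is correct in outline but takes a genuinely different route from the paper. The paper never passes through a Hoeffding/H\'ajek projection: it runs the Slepian interpolation directly on the dependent statistic $W_r=\sqrt{n}(\hat R_{{\rm cv},r}-R_r^*)$, using the Doob-martingale increments $\Delta_i W_r=\mathbb E(\nabla_i W_r|F_i)$ together with the leave-one-out variables $W_r^i=\mathbb E(W_r|\mathbf X^{-i})$, and the whole burden is carried by showing that the per-sample covariance surrogate $C_{rs,i}=\mathbb E(\nabla_i g_{r,i}|F_i)\,\mathbb E(\nabla_i g_{s,i}|\mathbf X)$ concentrates around $\phi_{rs}$ (\Cref{lem:C_rsi} and \Cref{lem:E(C_rsi)-phi_rs}); the mismatch $|\mathbb E C_{rs,i}-\phi_{rs}|\le c\lstability$ multiplies the $M_2\asymp\beta^2$ term and, after balancing against $\beta^{-1}$, is exactly what produces the $[\lstability(1+\rstability)]^{1/3}$ rate. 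You instead match the covariance exactly at the level of the iid projection $\sum_i\mathbf W_i$ (your identification of $W_{r,i}$ and of ${\rm Cov}(\mathbf W_i)=\boldsymbol\Phi$ is correct, and the sub-Weibull scale $\tilde O(1+\rstability)$ of $W_{r,i}$ follows from a one-line Jensen argument applied to $\nabla_i R_r$), pushing all of the dependence into an additive remainder $E_r$ that then enters only linearly through Gaussian anti-concentration. If the uniform bound $\max_r|E_r|/\sqrt{n}=\tilde O(\lstability(1+\rstability))$ holds --- and it does, by the same nested-martingale sub-Weibull machinery the paper already provides (\Cref{lem:MG-subW}, plus the Jensen step converting $\nabla_j\nabla_i R_r$ into second-order loss differences) --- your argument would in fact deliver the first error term at the linear rate $\tilde O(\lstability(1+\rstability))$ rather than its cube root, i.e., formally sharper than the theorem. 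What the paper's route buys in exchange is that it only ever needs conditional-expectation (covariance-level) control of the dependence, never a high-probability sup-norm bound on a projection remainder.

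Two places where your sketch is thinner than it can afford to be. First, the remainder bound \emph{is} the proof: organizing it as a full ANOVA expansion over all $|S|\ge 2$ is not viable (the number of projections is uncontrolled); you must instead write $E_r=\sum_i e_{r,i}$ with $e_{r,i}=\Delta_i(n\hat R_{{\rm cv},r})-W_{r,i}$, note this is a martingale difference sequence in $i$, expand each $e_{r,i}$ as a further martingale in $j<i$ whose increments are controlled by $\nabla_j\nabla_i(n\hat R_{{\rm cv},r})$, and apply \Cref{lem:MG-subW} twice before union-bounding over $r$. Second, your closing appeal to a ``cube-root optimization'' in the anti-concentration step is a misstatement: the anti-concentration cost is $\delta\sqrt{\log p}$, linear in the cutoff $\delta$, so no cube root arises on that side; the cube root in the stated bound has a different origin (covariance mismatch times $\beta^2$) that your decomposition avoids entirely.
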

\begin{remark}Similar to \Cref{rem:constant_sigma}, the constants $\bar\phi$ and $\underline{\phi}$ appear in the error bound as a multiplicative factor in the $\tilde O(\cdot)$ notation, which is no larger than $\bar\phi \underline{\phi}^{-2}$.
\end{remark}

\subsection{Deterministic variance estimation}
Finally, we address the problem of estimating $\phi_{rs}$, which has also been considered in \cite{austern2020asymptotics}.  We believe the estimate stated in their text is off by a factor of 2, and also only covers the case of two-fold cross-validation.  Our result below corrects the scaling and covers the general $V$-fold case in a multivariate setting.

As suggested in \eqref{eq:phi_rs} and \Cref{thm:thm3}, the covariance $\phi_{rs}$ is essentially the sum of the marginal variability of each $X_i$.  Indeed, we have the following result
\begin{theorem}[Marginal variance approximation]\label{thm:marginal_var}
Under \Cref{ass:condition_l,ass:lstability,ass:risk_stab,ass:phi_bound}, we conclude that
	$$\frac{n^2}{2}\mathbb E\left[\nabla_i \hat R_{{\rm cv},r}\nabla_i\hat R_{{\rm cv},s}|\mathbf X_{-v_i}\right]-
\phi_{rs}$$ 
is $\lstability(1+\rstability)$-SW.
\end{theorem}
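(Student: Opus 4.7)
The plan is to use the symmetric representation $\nabla_i(n\hat R_{{\rm cv},r}) = H_r(X_i) - H_r(X_i')$, where $H_r(x) = \ell_r(x;\mathbf X_{-v_i}) + \sum_{j\notin I_{v_i}}\ell_r(X_j;\mathbf X_{-v_j}(x))$ and $\mathbf X_{-v_j}(x)$ denotes $\mathbf X_{-v_j}$ with its copy of $X_i$ replaced by $x$. Since $X_i, X_i'$ are iid given $\mathbf X_{-v_i}$, one immediately obtains $\tfrac{n^2}{2}\mathbb E[\nabla_i\hat R_{{\rm cv},r}\nabla_i\hat R_{{\rm cv},s}|\mathbf X_{-v_i}] = {\rm Cov}[H_r(X_i), H_s(X_i)|\mathbf X_{-v_i}]$. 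Set $L_r(x) = \bar\ell_r(x) + \tilde n\bar R_r(x)$; since $X_i\perp\mathbf X_{-v_i}$, the identity ${\rm Cov}[L_r(X_i), L_s(X_i)|\mathbf X_{-v_i}] = \phi_{rs}$ holds exactly. Writing $H_r = L_r + \Delta_r$ reduces the task to bounding the residual cross covariances ${\rm Cov}[L_r, \Delta_s|\mathbf X_{-v_i}]$, ${\rm Cov}[\Delta_r, L_s|\mathbf X_{-v_i}]$, and ${\rm Cov}[\Delta_r, \Delta_s|\mathbf X_{-v_i}]$.

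Decompose the residual as $\Delta_r = \xi_r + \zeta_r + N_r$, where $\xi_r(x) = \ell_r(x;\mathbf X_{-v_i}) - \bar\ell_r(x)$ captures the evaluation noise, $\zeta_r(x) = (n/V)\sum_{v\neq v_i}[R_r(\mathbf X_{-v}(x)) - \bar R_r(x)]$ captures the risk fluctuation, and $N_r(x) = \sum_{v\neq v_i}\sum_{j\in I_v}[\ell_r(X_j;\mathbf X_{-v}(x)) - R_r(\mathbf X_{-v}(x))]$ captures the centered within-fold deviation. Each piece's conditional $X_i$-variance given $\mathbf X_{-v_i}$ must be controlled separately. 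Risk stability directly gives $|\zeta_r(x)-\zeta_r(x^*)| = O(\rstability)$-sub-Weibull, hence ${\rm Var}_{X_i}[\zeta_r|\mathbf X_{-v_i}]$ at scale $O(\rstability^2)$. For $\xi_r$, Efron--Stein in $\mathbf X_{-v_i}$ using first-order loss stability gives $\mathbb E[\xi_r^2|X_i] \le \tilde n\cdot O(\lstability^2/n) = O(\lstability^2)$, which a sub-Weibull concentration in $\mathbf X_{-v_i}$ lifts to ${\rm Var}_{X_i}[\xi_r|\mathbf X_{-v_i}]$ at scale $O(\lstability^2)$. The most delicate is $N_r$: the crucial observation is that $K_{r,j,v}(x) - K_{r,j,v}(x^*)$ equals a stability difference of $\ell_r$ in $X_j$ minus its $X_0$-expectation, hence has conditional mean zero over $X_j$ given the training data; summing over $j\in I_v$ then yields a centered empirical sum concentrating at scale $O(\sqrt{|I_v|}\cdot\lstability/\sqrt n) = O(\lstability/\sqrt V)$-sub-Weibull via a sub-Weibull Bernstein-type bound, giving the sharp ${\rm Var}_{X_i}[N_r|\mathbf X_{-v_i}]$ at scale $O(\lstability^2)$ rather than the $O(n\lstability^2)$ from a naive triangle inequality.

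Finally, assemble via Cauchy--Schwarz using ${\rm Var}[L_r] = O(1 + \rstability^2)$: this bounds ${\rm Cov}[L_r, \xi_s + N_s|\mathbf X_{-v_i}]$ by $O(\lstability(1+\rstability))$-sub-Weibull and ${\rm Cov}[\Delta_r,\Delta_s|\mathbf X_{-v_i}]$ by $O((\lstability+\rstability)^2)$-sub-Weibull, both absorbed in the target error. The term ${\rm Cov}[L_r,\zeta_s|\mathbf X_{-v_i}]$ needs finer treatment since Cauchy--Schwarz alone would yield only $O(\rstability(1+\rstability))$: a direct computation via $\mathbb E[R_s(\mathbf X_{-v}(X_i))|X_i] = \bar R_s(X_i)$ shows $\mathbb E[\bar\ell_r(X_i)\zeta_s(X_i)] = 0$, so the conditional covariance is a mean-zero functional of $\mathbf X_{-v_i}$, and an Efron--Stein argument on this functional using the loss-stability operator $\nabla_j$ gains an additional factor of $\lstability$, promoting the bound to $O(\lstability(1+\rstability))$-sub-Weibull. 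Summing all contributions yields the required error. The main obstacle is the pair of sharp bounds on $N_r$ (via centered empirical concentration) and on ${\rm Cov}[L_r,\zeta_s|\mathbf X_{-v_i}]$ (via mean-zero Efron--Stein), both of which require exploiting specific mean-zero structure rather than relying on naive triangle or Cauchy--Schwarz inequalities.
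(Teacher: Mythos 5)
There is a genuine gap at the very first step, and it propagates through the whole argument. Your identity
\begin{equation*}
\tfrac{n^2}{2}\,\mathbb E\bigl[\nabla_i\hat R_{{\rm cv},r}\nabla_i\hat R_{{\rm cv},s}\mid\mathbf X_{-v_i}\bigr]={\rm Cov}\bigl[H_r(X_i),H_s(X_i)\mid\mathbf X_{-v_i}\bigr]
\end{equation*}
is false for $V$-fold cross-validation with $V<n$. The symmetrization $\nabla_i(n\hat R_{{\rm cv},r})=H_r(X_i)-H_r(X_i')$ turns the second moment into a covariance only if $(H_r(X_i),H_s(X_i))$ and $(H_r(X_i'),H_s(X_i'))$ are conditionally independent given the conditioning $\sigma$-field. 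But $H_r(x)=\ell_r(x;\mathbf X_{-v_i})+\sum_{j\notin I_{v_i}}\ell_r(X_j;\mathbf X_{-v_j}(x))$ depends not only on $x$ and $\mathbf X_{-v_i}$ but also on the other $n/V-1$ points of fold $v_i$, which appear in every training set $\mathbf X_{-v_j}$ with $j\notin I_{v_i}$ and are \emph{not} fixed by conditioning on $\mathbf X_{-v_i}$. Conditioning on $\mathbf X^{-i}$ first and applying the law of total covariance gives the correct relation
\begin{equation*}
\tfrac{n^2}{2}\,\mathbb E\bigl[\nabla_i\hat R_{{\rm cv},r}\nabla_i\hat R_{{\rm cv},s}\mid\mathbf X_{-v_i}\bigr]={\rm Cov}\bigl[H_r,H_s\mid\mathbf X_{-v_i}\bigr]-{\rm Cov}\bigl(\mathbb E[H_r\mid\mathbf X^{-i}],\,\mathbb E[H_s\mid\mathbf X^{-i}]\;\big|\;\mathbf X_{-v_i}\bigr)\,,
\end{equation*}
and the correction term is not negligible: each remaining fold-$v_i$ point perturbs $\mathbb E[H_r\mid\mathbf X^{-i}]$ by an amount of order $\lstability+\rstability$ (a $D_{r,k}$-type sum plus $\tilde n$ risk perturbations of size $\rstability/n$), so Efron--Stein puts the correction at scale $(n/V)(\lstability+\rstability)^2$, which dwarfs both the target accuracy $\lstability(1+\rstability)$ and $\phi_{rs}$ itself whenever $\lstability\gg n^{-1/2}$ or $\rstability\gtrsim n^{-1/2}$. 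Your identity is exact only for leave-one-out CV, where $\mathbf X_{-v_i}=\mathbf X^{-i}$. The repair is to run your entire decomposition conditionally on $\mathbf X^{-i}$ and only pass to $\mathbb E[\cdot\mid\mathbf X_{-v_i}]$ at the end via the tower property (Jensen preserves the sub-Weibull scale, so that final step is harmless).

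A second, independent problem is in the assembly: you bound ${\rm Cov}[\Delta_r,\Delta_s\mid\cdot]$ by $O((\lstability+\rstability)^2)$ and declare it absorbed in the target error. It is not, since the paper explicitly allows $\rstability$ to be a nonvanishing constant, in which case $(\lstability+\rstability)^2\asymp 1$ while $\lstability(1+\rstability)\to 0$; the $\zeta_r\zeta_s$ contribution needs the same mean-zero/orthogonality treatment you reserve for ${\rm Cov}[L_r,\zeta_s]$. For comparison, the paper sidesteps the conditional-covariance identification entirely: it splits the target into a deterministic bias $\frac{1}{2}\mathbb E[\nabla_i f_r\nabla_i f_s]-\phi_{rs}$ (controlled by orthogonal martingale decompositions of $\nabla_iK_{r,i}$ and $\nabla_i\mathcal R_{r,i}$, whose leading increments reproduce the three pieces of $\phi_{rs}$ and whose cross increments are second-order differences of size $\lstability^2$) plus a fluctuation $\mathbb E[\cdot\mid\mathbf X_{-v_i}]-\mathbb E[\cdot]$ (controlled as a martingale in the $\tilde n$ points of $\mathbf X_{-v_i}$ with increments of scale $n^{-1/2}\lstability(1+\rstability)$). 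Several of your ingredients (the $L_r=\bar\ell_r+\tilde n\bar R_r$ identification, the Efron--Stein bounds on the residuals) mirror that structure, but as written the proof does not go through.
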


\Cref{thm:marginal_var} implies that we can simply estimate
$\mathbb E[\nabla_1 \hat R_{{\rm cv},r}\nabla_1 \hat R_{{\rm cv},s}|\mathbf X_{v_1}]$ to approximate $\phi_{rs}$.
This leads to the following procedure, which requires a hold-out set of iid sample points $X_1',...,X_m'$ from the same distribution, that are not involved in any cross-validation folds.  In practice, one can choose a small but diverging value of $m=n^a$ with $a\in (0,1)$, then use $n-m$ sample points for the $V$-fold cross-validation and $m$ hold-out sample points for variance estimation.

For $i\in [n]$ and $j\in[m]$, define
$\hat{\mathbf R}_{{\rm cv}}^{i,j}$
to be the cross-validation risk vector obtained by replacing $X_i$ with $X_j'$.
Then \Cref{thm:marginal_var} implies the following.
\begin{corollary}\label{cor:det_cov_est_consist}
Define
\begin{align}
	\hat\phi_{rs} = \frac{n^2}{m}\sum_{j=1}^{m/2}\left( \hat R_{{\rm cv},r}^{1,2j-1} - \hat R_{{\rm cv},r}^{1,2j}\right)\left(\hat R_{{\rm cv},s}^{1,2j-1}-\hat R_{{\rm cv},s}^{2j}\right)\,.\label{eq:thm3_var_estimator}
\end{align}
	Then with probability at least $1-O((n+p)^{-1})$ we have
	$$
\sup_{r,s}|\hat\phi_{rs}-\phi_{rs}|\le \tilde O\left(\lstability(1+\rstability)+m^{-1/2}\right)\,.
	$$
\end{corollary}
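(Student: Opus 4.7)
The plan is a bias--variance decomposition of $\hat\phi_{rs}-\phi_{rs}$ relative to conditioning on $\mathbf X_{-v_1}$, followed by a union bound over $(r,s)\in[p]^2$. The key structural observation is that $\hat R_{{\rm cv},r}^{1,k}$ admits a decomposition $A_r(\mathbf X_{I_{v_1}\setminus\{1\}};\mathbf X_{-v_1})+B_r(X_k';\mathbf X_{-v_1})+C_r(X_k';\mathbf X_{-v_1})$, where $A_r$ collects the contributions of the other in-fold evaluators (whose training data $\mathbf X_{-v_1}$ does not depend on $X_k'$), $B_r$ is the contribution of position $1$ as an evaluator, and $C_r$ is the aggregate contribution of position $1$ as a training point in the other $V-1$ folds. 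Hence $D_{j,r}:=\hat R_{{\rm cv},r}^{1,2j-1}-\hat R_{{\rm cv},r}^{1,2j}$ is a function only of $(X_{2j-1}',X_{2j}',\mathbf X_{-v_1})$, so the summands $T_j:=n^2 D_{j,r}D_{j,s}$ are iid across $j$ conditional on $\mathbf X_{-v_1}$.

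Using the identity $\mathbb E[(Y-Y')(Z-Z')\mid\mathcal F]=2\,\mathrm{Cov}(Y,Z\mid\mathcal F)$ for conditionally iid pairs, applied both to $(D_{j,r},D_{j,s})$ over $(X_{2j-1}',X_{2j}')$ and to $(\nabla_1\hat R_{{\rm cv},r},\nabla_1\hat R_{{\rm cv},s})$ over $(X_1,X_1')$, I would conclude
\[
\mathbb E[\hat\phi_{rs}\mid\mathbf X_{-v_1}]=\frac{n^2}{2}\,\mathbb E\!\left[\nabla_1\hat R_{{\rm cv},r}\,\nabla_1\hat R_{{\rm cv},s}\mid\mathbf X_{-v_1}\right].
\]
Applying \Cref{thm:marginal_var} to the right-hand side shows that the bias $\mathbb E[\hat\phi_{rs}\mid\mathbf X_{-v_1}]-\phi_{rs}$ is $\lstability(1+\rstability)$-sub-Weibull, hence is $\tilde O(\lstability(1+\rstability))$ on an event of probability at least $1-(n+p)^{-c}$ for any fixed $c$.

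For the variance, the iid summands $T_j$ are polylog-sub-Weibull: $nD_{j,r}$ has sub-Weibull norm of polylog order (up to $(1+\lstability+\rstability)$ factors), by the same stability/martingale decomposition that the proof of \Cref{thm:marginal_var} uses to produce the $O(\phi_{rr}/n^2)$ conditional second moment. A standard sub-Weibull Bernstein inequality for iid sums, conditional on $\mathbf X_{-v_1}$, then yields $|\hat\phi_{rs}-\mathbb E[\hat\phi_{rs}\mid\mathbf X_{-v_1}]|=\tilde O(m^{-1/2})$ with probability at least $1-(n+p)^{-c}$. Combining bias and variance by triangle inequality for fixed $(r,s)$, and taking a union bound over the $p^2$ pairs (absorbing $\log p$ into the polylog inside $\tilde O$) finishes the proof with exceptional probability $O((n+p)^{-1})$.

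The main obstacle will be the polylog sub-Weibull norm control on $nD_{j,r}$, equivalently on $n\nabla_1\hat R_{{\rm cv},r}$: this quantity is a sum of $\Theta(n)$ correlated sub-Weibull terms of scale $n^{-1/2}\lstability$, so a naive triangle-inequality would contribute a $\sqrt n$ factor rather than a polylog one. Obtaining the correct scaling requires reusing the martingale/telescoping argument from the proof of \Cref{thm:marginal_var} rather than treating the summation term-by-term. The cancellation of the $A_r$ component in the difference $D_{j,r}$ is essential here, since it is exactly what promotes the conditionally iid structure needed for Bernstein in the variance step.
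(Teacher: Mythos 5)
Your overall architecture is exactly what the paper intends: the corollary is presented as a direct consequence of \Cref{thm:marginal_var}, obtained by a bias--variance split of $\hat\phi_{rs}$ around a conditional mean, an application of \Cref{thm:marginal_var} to the bias, a sub-Weibull concentration bound for the empirical average over the $m/2$ hold-out pairs, and a union bound over the $p^2$ pairs $(r,s)$. Your identity $\mathbb E[\hat\phi_{rs}\mid\cdot]=\tfrac{n^2}{2}\mathbb E[\nabla_1\hat R_{{\rm cv},r}\nabla_1\hat R_{{\rm cv},s}\mid\cdot]$ and your observation that the $\tilde O(1)$ sub-Weibull scale of $n\nabla_1\hat R_{{\rm cv},r}$ requires the martingale bound (this is what \Cref{lem:D} supplies for the $D_{r,1}$ component) rather than a term-by-term triangle inequality are both on target.

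One step is stated incorrectly, though the repair is routine. You claim that $D_{j,r}=\hat R^{1,2j-1}_{{\rm cv},r}-\hat R^{1,2j}_{{\rm cv},r}$ is a function of $(X'_{2j-1},X'_{2j},\mathbf X_{-v_1})$ alone, so that the summands are iid conditional on $\mathbf X_{-v_1}$. The in-fold evaluator terms $\ell_r(X_i;\mathbf X_{-v_1})$ for $i\in I_{v_1}\setminus\{1\}$ do cancel in the difference, but the out-of-fold terms $\ell_r(X_i;\cdot)$ for $i\notin I_{v_1}$ are evaluated on training sets that contain the whole of fold $v_1$, so $D_{j,r}$ still depends on $\mathbf X_{I_{v_1}\setminus\{1\}}$. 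Consequently the $D_{j,r}$ are iid only conditionally on $\mathbf X^{-1}$, not on $\mathbf X_{-v_1}$; given only $\mathbf X_{-v_1}$ they are exchangeable but share the in-fold randomness, and the conditional Bernstein inequality does not apply as written. The fix is to run the concentration step conditionally on $\mathbf X^{-1}$; the resulting conditional mean $\tfrac{n^2}{2}\mathbb E[\nabla_1\hat R_{{\rm cv},r}\nabla_1\hat R_{{\rm cv},s}\mid\mathbf X^{-1}]$ then differs from the quantity controlled by \Cref{thm:marginal_var} by a fluctuation over $\mathbf X_{I_{v_1}\setminus\{1\}}$, which is handled by the same martingale argument as in \Cref{thm:concentration_nabla_i_f} with the filtration extended over the $n/V-1$ remaining in-fold points, contributing another $\tilde O(\lstability(1+\rstability))$. (The paper's own one-sentence justification of the corollary, which speaks of ``conditional iid samples given the fitting data $\mathbf X_{-v_1}$,'' elides the same point.) With that repair your proof goes through.
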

The estimator in \eqref{eq:thm3_var_estimator} estimates $\mathbb E[\nabla_1 \hat R_{{\rm cv},r}\nabla_1 \hat R_{{\rm cv},s}|\mathbf X_{-v_1}]$ by taking empirical average over $m/2$ conditional iid samples given the fitting data $\mathbf X_{-v_1}$, which is supported by \Cref{thm:asymp_var_nabla_i} and \Cref{thm:concentration_nabla_i_f}.  In practice, we can possibly also use 
$$
\hat\phi_{rs} = \frac{n^2}{2m}\sum_{j=1}^{m}\left( \hat R_{{\rm cv},r} - \hat R_{{\rm cv},r}^{i_j,j}\right)\left(\hat R_{{\rm cv},s}-\hat R_{{\rm cv},s}^{i_j,j}\right)\,.
$$
which perturbs different entries instead of just the first one.

\Cref{cor:det_cov_est_consist} provides an entry-wise error bound of the covariance estimation, which is good enough for Gaussian approximation of the supremum, as demonstrated in \Cref{cor:simul_ci}.  The same kind of inference procedures considered in \Cref{sec:simu_band} can be carried over to the deterministic centering case, which is omitted here as there is little additional insight.

\section{Discussion}
%!TEX root = ./hd_cv_clt.tex
Since its first appearance, high-dimensional Gaussian comparisons have found wide applications in statistical inference problems, and have been extended and improved by many authors.  In addition to the extensions to dependent data mentioned above, sharper results on the Gaussian comparison of independent sums have been obtained in recent literature. For example, see \cite{deng2020beyond,kuchibhotla2021high,lopes2020central,kuchibhotla2020high}.  In our work, the goal is to develop an asymptotic Gaussian comparison to serve the purpose of statistical inference. Thus we did not attempt to obtain the optimal Berry-Esseen type of  convergence rates.  Our proof uses the Slepian interpolation as in the original work \cite{chernozhukov2013gaussian}, and it seems possible to obtain better rates of convergence if more refined techniques are used.

Our main motivations are to understand the joint randomness of many cross-validated risks, and to provide theoretical foundations for uncertainty quantification of cross-validation based model selection.  The theory included in this work is particularly relevant to the ``cross-validation with confidence'' method \citep{lei2020cross}, where one uses the asymptotic Gaussian comparison to construct a confidence set that contains the best model with a prescribed confidence level.  This method is connected to the model selection confidence set literature \citep{hansen2011model}, which has largely relied on sequential hypothesis testing based approaches \citep{gunes2012confidence,ferrari2015confidence,jiang2008fence}.  We expect the theory outlined in this paper to be useful in developing a new model confidence set estimator using cross-validation with both provable validity guarantees and good practical performance.

\appendix
\section{More notation, definition, and basic properties}
%!TEX root = ./hd_cv_clt.tex

\subsection{Definition and properties of sub-Weibull concentration}
\begin{definition}[sub-Weibull]\label{def:sw_new}
Let $K,\alpha$ be positive numbers. We say a random variable $X$ is $(K,\alpha)$-sub-Weibull (or $(K,\alpha)$-SW)  if any of the following holds:
\begin{enumerate}
	\item There exists constant $a$ such that $\mathbb P\left(\frac{|X|}{K} \ge t\right)\le a e^{-t^\alpha}$, for all $t>0$.
	\item There exists constant $c$ such that $\|X\|_q\le cK q^{1/\alpha}$ for all $q\ge 1$. 
\end{enumerate}
\end{definition}
The equivalence of these two definitions can be found in, for example, Theorem 2.1 of \cite{vladimirova2020sub}. The constants $a,c$ in the definition above are not important, and are used here so that the two definitions have the same $(K,\alpha)$ pair.

\begin{proposition}[Basic properties of sub-Weibull random variables]\label{pro:basic_SW}
If $X_i$ is $(K_i,\alpha_i)$-SW, for $i=1,2$, then
\begin{enumerate}
	\item  $X_1 X_2$ is $(K_1K_2,\frac{\alpha_1\alpha_2}{\alpha_1+\alpha_2})$-SW.
	\item $X_1+X_2$ is $(K_1\vee K_2,\alpha_1\wedge \alpha_2)$-SW.
\end{enumerate}
\end{proposition}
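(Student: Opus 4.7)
The plan is to work with the moment-based characterization in item~2 of \Cref{def:sw_new}, namely that $X$ is $(K,\alpha)$-SW iff $\|X\|_q \le c K q^{1/\alpha}$ for all $q\ge 1$, rather than manipulating tail probabilities directly. The equivalence with item~1 is the content of Theorem~2.1 of \cite{vladimirova2020sub}, which is cited just before the proposition, so once a moment bound of the right form is established we can pass back to the tail statement without any additional work. Both parts will then reduce to a one-line inequality for $q$-norms.

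For part~2 (the sum), I would apply Minkowski's inequality to obtain
\[
\|X_1+X_2\|_q \le \|X_1\|_q+\|X_2\|_q \le c_1K_1 q^{1/\alpha_1}+c_2K_2 q^{1/\alpha_2}.
\]
Since $q\ge 1$ and $\alpha_1\wedge\alpha_2\le \alpha_i$, each $q^{1/\alpha_i}\le q^{1/(\alpha_1\wedge\alpha_2)}$, and trivially $K_i\le K_1\vee K_2$. Hence $\|X_1+X_2\|_q \le (c_1+c_2)(K_1\vee K_2) q^{1/(\alpha_1\wedge\alpha_2)}$, which by the equivalence gives the claimed $(K_1\vee K_2,\alpha_1\wedge\alpha_2)$-SW property.

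For part~1 (the product), I would apply H\"older's inequality with any fixed conjugate pair $p_1,p_2>1$ (e.g.\ $p_1=p_2=2$):
\[
\|X_1X_2\|_q \le \|X_1\|_{qp_1}\,\|X_2\|_{qp_2} \le c_1c_2 K_1K_2\,(qp_1)^{1/\alpha_1}(qp_2)^{1/\alpha_2}.
\]
Collecting powers of $q$ produces $q^{1/\alpha_1+1/\alpha_2}=q^{1/\beta}$ with $\beta=\alpha_1\alpha_2/(\alpha_1+\alpha_2)$, while the $p_i^{1/\alpha_i}$ factors are absorbed into a single numerical constant independent of $q$. This yields $\|X_1X_2\|_q\le c' K_1K_2\, q^{1/\beta}$, which is the required $(K_1K_2,\beta)$-SW bound.

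There is no substantive obstacle here; the only subtle point is bookkeeping of the implicit constants when converting between the moment and tail formulations and between the two $p_i$-choices in H\"older. But the paper's convention stated in \Cref{rem:sw} is that the constants $a,b$ (and the multiplicative ones $c,c'$) are allowed to change from instance to instance so long as they remain absolute. Since each of our manipulations only multiplies the right-hand side by a constant depending on $\alpha_1,\alpha_2$, both claims follow directly from the equivalence cited in \cite{vladimirova2020sub}.
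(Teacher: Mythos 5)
Your argument is correct, and in fact the paper gives no proof of this proposition at all --- it is stated as a basic fact, implicitly deferred to the cited references on sub-Weibull variables. Your moment-based route (Minkowski for the sum, H\"older/Cauchy--Schwarz for the product, then the equivalence of the tail and moment characterizations from \Cref{def:sw_new}) is the standard argument, and the bookkeeping is right: $q^{1/\alpha_i}\le q^{1/(\alpha_1\wedge\alpha_2)}$ for $q\ge 1$ in part~2, and $1/\alpha_1+1/\alpha_2=(\alpha_1+\alpha_2)/(\alpha_1\alpha_2)$ in part~1, with the factor $p_i^{1/\alpha_i}$ harmlessly absorbed into the constant under the paper's convention in \Cref{rem:sw}.
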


The following theorem controls the tail integral of sub-Weibull random variables.
\begin{lemma}\label{lem:subWeibull-tail-int}
	If $Y$ is $(K,\alpha)$-sub-Weibull, then there exists constant $c>0$ independent of $K$ such that 
	$\mathbb E[|Y|\mathds{1}(|Y|\ge w K)]\le c K w\exp(-w^\alpha)$ for any $w\ge 1$.
\end{lemma}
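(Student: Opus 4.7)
The plan is to use the layer-cake identity together with the sub-Weibull tail bound. Specifically, for any nonnegative threshold $T=wK$ with $w\ge 1$, we have the tail-integral identity
\begin{equation*}
\mathbb{E}[|Y|\mathds{1}(|Y|\ge T)] = T\,\mathbb{P}(|Y|\ge T) + \int_{T}^{\infty}\mathbb{P}(|Y|\ge t)\,dt.
\end{equation*}
The first term is immediately handled by \Cref{def:sw_new}(1): it is at most $a\,wK\,e^{-w^\alpha}$, which already has the desired form $cKw\,e^{-w^\alpha}$.

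For the integral term, I would change variables $t=Ks$ and apply the sub-Weibull bound again to get
\begin{equation*}
\int_{wK}^{\infty}\mathbb{P}(|Y|\ge t)\,dt = K\int_{w}^{\infty}\mathbb{P}(|Y|/K\ge s)\,ds \le aK\int_{w}^{\infty}e^{-s^\alpha}\,ds.
\end{equation*}
The remaining step is a standard upper-tail bound for the Weibull-type integral: substituting $u=s^\alpha$ recognizes it as an incomplete Gamma integral, and for $w\ge 1$ one obtains $\int_{w}^{\infty}e^{-s^\alpha}\,ds \le C_\alpha\,w^{1-\alpha}e^{-w^\alpha}$ for a constant $C_\alpha$ depending only on $\alpha$ (the leading-order asymptotic of $\Gamma(1/\alpha,w^\alpha)$). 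Alternatively, one can verify this by differentiation or by the crude bound $\int_w^\infty e^{-s^\alpha}ds \le \int_w^\infty \frac{\alpha s^{\alpha-1}}{\alpha w^{\alpha-1}}e^{-s^\alpha}ds = \frac{1}{\alpha w^{\alpha-1}}e^{-w^\alpha}$ when $\alpha\ge 1$, with a parallel calculation for $\alpha\in(0,1)$.

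Finally, for $w\ge 1$ we have $w^{1-\alpha}\le w$ whenever $\alpha\ge 0$ (and trivially $1\le w$), so both contributions are bounded by $c\,K\,w\,e^{-w^\alpha}$ for a constant $c$ depending on $a$, $C_\alpha$, and $\alpha$ but independent of $K$ and $w$. This yields the claim. The only mild subtlety is the handling of the $w^{1-\alpha}$ factor when $\alpha<1$, but the restriction $w\ge 1$ makes this a nonissue; no sharp tracking of $\alpha$-dependent constants is required since the lemma absorbs them into $c$.
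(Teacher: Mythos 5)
Your proof is correct and takes essentially the same route as the paper's: a layer-cake decomposition of $\mathbb E[|Y|\mathds{1}(|Y|\ge wK)]$ into a boundary term plus a tail integral, each controlled by the sub-Weibull tail bound, with the remaining Weibull-type integral handled via an incomplete-Gamma estimate (the paper likewise splits the cases $\alpha\le 1$ and $\alpha>1$ and absorbs the resulting $\alpha$-dependent constants and the power of $w$ into $cw$ using $w\ge 1$). No gaps.
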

\begin{proof}[Proof of \Cref{lem:subWeibull-tail-int}]
	Without loss of generality, assume $Y\ge 0$ and $K=1$. Let $f(y)$ be the density function of $Y$.
	\begin{align*}
		\mathbb E[Y\mathds{1}(Y\ge w)]=&\int_{y=w}^\infty y f(y)dy\\
=& \int_{y=w}^\infty \int_{u=0}^y du f(y) dy\\
=&\int_{u=0}^w \int_{y=w}^\infty f(y)dy du+\int_{u=w}^\infty \int_{y=u}^\infty f(y)dy du\\
=& w \mathbb P(Y\ge w) + \int_{u=w}^\infty \mathbb P(Y\ge u) du\\
\le & w a \exp(-w^\alpha) +\int_{u=w}^\infty a \exp(-u^\alpha)du\\
= &w a \exp(-w^\alpha) +\frac{a}{\alpha}\int_{v=w^\alpha}^\infty \exp(-v)v^{\frac{1}{\alpha}-1}dv\,.
	\end{align*}
	When $\alpha\in(0,1]$, since $w\ge 1$ we have, by \cite[][Proposition 4.4.3]{gabcke1979neue}
	\begin{align*}
		\int_{v=w^\alpha}^\infty \exp(-v)v^{\frac{1}{\alpha}-1}dv\le \frac{1}{\alpha}e^{-w^\alpha} w^{1-1/\alpha}\,.
	\end{align*}
	Thus, $\mathbb E[Y\mathds{1}(Y\ge w)]\le a(1+\alpha^{-2}) w \exp(-w^\alpha)$.

	When $\alpha>1$, $v^{1/\alpha-1}\le 1$ on $[w,\infty)$ since $w\ge 1$, so we have
	\begin{align*}
		\int_{v=w^\alpha}^\infty \exp(-v)v^{\frac{1}{\alpha}-1}dv\le \exp(-w^\alpha)\,.
	\end{align*}
	So
	$\mathbb E[Y\mathds{1}(Y\ge w)]\le 2aw \exp(-w^\alpha)$\,.
\end{proof}

The following lemma is a sub-Weibull version of martingale concentration inequality, showing that the scaling of a martingale with stationary sub-Weibull increments scales at the speed of $\sqrt{n}$, where $n$ is the time horizon.
\begin{lemma}\label{lem:MG-subW}
Let $M=\sum_{i=1}^n M_i$ where the sequence $(M_i)_{i=1}^n$ satisfies
\begin{enumerate}
	\item martingale property: $\mathbb E(M_i|M_j:1\le j< i)=0$ for all $2\le i\le n$, and $\mathbb E M_1=0$.
	\item sub-Weibull tail: $\|M_i\|_q\le cK_i q^{1/\alpha_i}$ for some $c,\alpha_i>0$ and all $q\ge 1$. 
\end{enumerate}                 
Then we have, for $\alpha'=\frac{2\alpha^*}{2+\alpha^*}$ with $\alpha^* = \min_{j\leq n}\alpha_j$ and a positive constant $c'$,
$$
\|M\|_q\le c' \left(\sum_{i=1}^n K_i^2\right)^{1/2} q^{1/\alpha'}\,,~~\forall q\ge 1\,.
$$  
If $K_i = K$ for all $i\in[n]$, then
$$
\|M\|_q\le c' \sqrt{n}K q^{1/\alpha'}\,,~~\forall q\ge 1\,.
$$  
\end{lemma}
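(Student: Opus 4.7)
\textbf{Proof plan for \Cref{lem:MG-subW}.}
The plan is to reduce control of $\|M\|_q$ to control of the individual $\|M_i\|_q$ via a sharp form of the Burkholder--Davis--Gundy (BDG) inequality, and then plug in the sub-Weibull moment growth. The key point is that the BDG constant in $L^q$ for $q\ge 2$ scales like $\sqrt{q}$ (e.g.\ Pinelis / Rio), not like $q$; this extra $\sqrt{q}$ is exactly what converts the $q^{1/\alpha^*}$ growth of each $\|M_i\|_q$ into the $q^{1/\alpha'}$ growth with $\alpha'=2\alpha^*/(2+\alpha^*)$.

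First I would reduce to $q\ge 2$: for $1\le q<2$, Jensen gives $\|M\|_q\le \|M\|_2$, and the $q=2$ case is already contained in the conclusion. For $q\ge 2$, I would apply the sharp BDG inequality for martingales with differences $M_i$,
\begin{equation*}
\|M\|_q \;\le\; C\sqrt{q}\,\left\|\Bigl(\textstyle\sum_{i=1}^n M_i^2\Bigr)^{1/2}\right\|_q\,,
\end{equation*}
with an absolute constant $C$. Next I would square and apply the triangle inequality in $L^{q/2}$:
\begin{equation*}
\left\|\Bigl(\textstyle\sum_{i=1}^n M_i^2\Bigr)^{1/2}\right\|_q^2 \;=\; \left\|\textstyle\sum_{i=1}^n M_i^2\right\|_{q/2} \;\le\; \sum_{i=1}^n \|M_i^2\|_{q/2} \;=\; \sum_{i=1}^n \|M_i\|_q^2\,.
\end{equation*}
Combining these two inequalities yields the intermediate estimate
\begin{equation*}
\|M\|_q \;\le\; C\sqrt{q}\,\Bigl(\textstyle\sum_{i=1}^n \|M_i\|_q^2\Bigr)^{1/2}\,.
\end{equation*}

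Then I would plug in the sub-Weibull hypothesis $\|M_i\|_q\le cK_iq^{1/\alpha_i}\le cK_iq^{1/\alpha^*}$, giving
\begin{equation*}
\|M\|_q \;\le\; Cc\,\sqrt{q}\,q^{1/\alpha^*}\Bigl(\textstyle\sum_{i=1}^n K_i^2\Bigr)^{1/2}
\;=\; c'\,\Bigl(\textstyle\sum_{i=1}^n K_i^2\Bigr)^{1/2}\,q^{1/2+1/\alpha^*}\,,
\end{equation*}
and the exponent identity $\tfrac12+\tfrac{1}{\alpha^*}=\tfrac{1}{\alpha'}$ with $\alpha'=2\alpha^*/(2+\alpha^*)$ delivers the stated bound. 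Specializing to $K_i\equiv K$ gives the $\sqrt{n}K$ version.

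The main obstacle is just invoking the correct constant in BDG: a naive use of the martingale moment inequality would yield a factor of $q$ instead of $\sqrt{q}$, which would give the wrong (larger) sub-Weibull exponent. Once the $\sqrt{q}$ version is in hand, the remaining steps (triangle inequality in $L^{q/2}$, Jensen to cover $q<2$, arithmetic with the exponents) are routine. Note in particular that the martingale structure is used only through BDG; the conditional expectation vanishing is not needed elsewhere in the argument.
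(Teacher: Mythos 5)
Your proposal is correct and follows essentially the same route as the paper: the paper invokes Rio's martingale moment inequality, $\|M\|_q\le\bigl[(q-1)\sum_i\|M_i\|_q^2\bigr]^{1/2}$ for $q\ge 2$, which is exactly the intermediate estimate you derive from the sharp ($\sqrt{q}$-constant) BDG inequality plus the triangle inequality in $L^{q/2}$, and the remaining substitution of $\|M_i\|_q\le cK_iq^{1/\alpha^*}$ and the exponent arithmetic $\tfrac12+\tfrac1{\alpha^*}=\tfrac1{\alpha'}$ are identical. Your explicit handling of $1\le q<2$ via Jensen is a small point the paper leaves implicit.
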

\begin{proof}[Proof of \Cref{lem:MG-subW}]
	By Theorem 2.1 of \cite{rio2009moment}, we have for any $q\ge 2$
	\begin{align*}
		\left\|M\right\|_q\le \left[(q-1)\sum_{i=1}^n \|M_i\|_q^2\right]^{1/2}\le \left[C(q-1)q^{2/\alpha^*}\sum_{i=1}^n K_i^2\right]^{1/2}\le C^{1/2} q^{\frac{2+\alpha^*}{2\alpha^*}}\left( \sum_{i=1}^n K_i^2 \right)^{1/2}\,.
	\end{align*}
	where $C$ is a constant depending only on $c$, and the second inequality follows from the assumption $\|M_i\|_q\le c K_i q^{1/\alpha_i}$.
\end{proof}

\section{Proof for random centering}
%!TEX root = ./hd_cv_clt.tex
\subsection{Notation}
We first collect some notation for the proof.
For the ease of presentation, we use $\mathbf W=(W_r:1\le r\le p)$ to denote the centered and scaled random vector for which we would like to establish a normal approximation.
Thus, in the proof that follows, the symbol $W_r$ may refer to different objects than in other proofs in this work. 
In particular, for the random centering/scaling case (\Cref{thm:1}), $W_r
=\sqrt{n}(\hat R_{{\rm cv},r}-\tilde R_r)$, while in the deterministic centering/scaling case (\Cref{thm:thm3}), $W_r=\sqrt{n}(\hat R_{{\rm cv},r}-R_r^*).$

Recall the following notation:
\begin{itemize}
	\item $\mathbf X_{-v}$: the $\tilde n$ ($=n(1-1/V)$) subvector of $\mathbf X$ excluding those in index $I_{v}$.
	\item $\mathbf X^{-i}$: the $(n-1)$ subvector of $\mathbf X$ excluding the $i$th entry.
	\item $\mathbf X^i$: the iid vector of $\mathbf X$ with $i$th entry being $X_i'$, an iid copy of $X_i$. 
\end{itemize}

For random objects $(U,V)$ and function $f$ acting on $(U,V)$, we will also use the notation $\mathbb E_U f(U,V) = \mathbb E[f(U,V)|V]$. For example $\mathbb E_{X_i}f(\mathbf X)=\mathbb E[f(\mathbf X)|\mathbf X^{-i}]$.

\subsection{Proof of \Cref{thm:1}}
\begin{proof}[Proof of \Cref{thm:1}]
	Recall the notation: 
	$$W_r=\sqrt{n}(\hat R_{{\rm cv},r}-\tilde R_r)=\sum_{i=1}^n \frac{1}{\sqrt{n}}K_{r,i}\,.$$
	Consider the leave-one-out version of $W_r$:
	$$
      W_{r}^i=\sum_{j\neq i} K_j(\mathbf X^i)/\sqrt{n} = W_r-\frac{1}{\sqrt{n}} K_{r,i}-\frac{1}{\sqrt{n}}D_{r,i}\,.
	$$
	The plan is to use Slepian's interpolation which smoothly bridges between $\mathbf W$ and the corresponding Gaussian vector $\mathbf Y$.
In order to do so, we consider an intermediate object 
$$\hat{\mathbf Y}=\frac{1}{\sqrt{n}}\sum_{i=1}^n \hat{\mathbf Y}_i\coloneqq\frac{1}{\sqrt{n}}\sum_{i=1}^n \Sigma_{v_i}^{1/2}\varepsilon_i$$ 
with $\varepsilon_i\stackrel{iid}{\sim}N(0,I_p)$, $\Sigma_{v}=[\sigma_{rs,v}]_{1\le r,s\le p}$ being the conditional covariance matrix of $\mathbf K(X_0;\mathbf X_{-v})$ given the fitting data $\mathbf X_{-v}$, and $v_i$ the fold identifier of the sample point indexed by $i$.

	Define the interpolating vector, for $t\in (0,1)$
	$$Z_r(t)=\sqrt{t}W_r + \sqrt{1-t} \sum_i \hat Y_{r,i}/\sqrt{n}\,,$$
	and the corresponding leave-one-out version
    \begin{align*}
    	Z_r^i(t) = \sqrt{t}W_{r}^i+\sqrt{1-t}\sum_{j\neq i} \hat Y_{r,i}/\sqrt{n}
    \end{align*}
    which satsifies
    $$
     Z_r(t)-Z_r^i(t) = Z_{r,i}(t)+D_i(t)
    $$
    with
    \begin{align*}
    	Z_{r,i}(t) = & \sqrt{t} K_i/\sqrt{n} + \sqrt{1-t}\hat Y_{r,i}/\sqrt{n}\,,\\
    	D_{r,i}(t) = & \sqrt{t}D_{r,i}/\sqrt{n}\,.
    \end{align*}

   Let $h:\mathbb R^p\mapsto \mathbb R$ be such that for all $z\in\mathbb R$. Define
   \begin{align}
    	\sum_{r,s=1}^p |\partial_r\partial_s h(z)|= & M_2(h)\,,\label{eq:M2}\\
        \sum_{r,s,u=1}^p |\partial_r\partial_s\partial_u h(z)|= &M_3(h)\,.\label{eq:M3}
    \end{align}

Because $\mathbb E [h(\mathbf W)-h(\hat{\mathbf Y})] =\mathbb E \int_0^1 \frac{dh(\mathbf Z(t))}{dt}dt$, the main step in the proof is to control $\mathbb E \frac{d h(\mathbf Z(t))}{dt}$.

   By Taylor expansion:
   \begin{align}
	\mathbb E\frac{dh(\mathbf Z(t))}{dt}& = \sum_{r=1}^p \sum_{i=1}^n \mathbb E[\partial_r h(\mathbf Z^i(t)) Z^\prime_{r,i}(t)]\nonumber \\
	&~~ + \sum_{s=1}^p \sum_{r=1}^p \sum_{i=1}^n \mathbb E[\partial_s \partial_r h(\mathbf Z^i(t)) \left( Z_{s,i}(t) + D_{s,i}(t) \right) Z_{r,i}^\prime(t)] \nonumber\\
	& ~~ + \sum_{u=1}^p \sum_{s=1}^p \sum_{r=1}^p \sum_{i=1}^n \mathbb E\bigg\{ [Z_{s,i}(t) + D_{s,i}(t)] [Z_{u,i}(t) + D_{u,i}(t)]\nonumber\\ 
	&~~~~\times\left[ \int_0^1 (1-v) \partial_u \partial_s \partial_r h(\mathbf Z^i(t) + v\mathbf Z_i(t)) dv \right] Z^\prime_{r,i}(t) \bigg\}.\label{eq:thm1_main_expansion}
\end{align}

The first term in \eqref{eq:thm1_main_expansion}
\begin{align*}
	\mathbb E[\partial_r h(\mathbf Z^i(t)) Z^\prime_{r,i}(t)] = \mathbb E\mathbb E_{X_i,\varepsilon_i}[\partial_r h(\mathbf Z^i(t)) Z^\prime_{r,i}(t)]=\mathbb E\left\{\partial_r h(\mathbf Z^i(t))\mathbb E_{X_i,\varepsilon_i}[Z^\prime_{r,i}(t)]\right\}=0\,.
\end{align*}

The second term in \eqref{eq:thm1_main_expansion} consists of two parts. First,
\begin{align*}
	&\mathbb E[\partial_s \partial_r h(\mathbf Z^i(t)) Z_{s,i}(t) Z_{r,i}^\prime(t)]\\
	=& (2n)^{-1}\mathbb E\partial_s \partial_r h(\mathbf Z^i(t)) \left[(\sqrt{t}K_{s,i}+\sqrt{1-t}\hat Y_{s,i})\left(\frac{K_{r,i}}{\sqrt{t}}-\frac{\hat Y_{r,i}}{\sqrt{1-t}}\right)\right]\\
	=& (2n)^{-1}\mathbb E\left\{\partial_s \partial_r h(\mathbf Z^i(t))\mathbb E_{X_i,\varepsilon_i} \left[(\sqrt{t}K_{s,i}+\sqrt{1-t}\hat Y_{s,i})\left(\frac{K_{r,i}}{\sqrt{t}}-\frac{\hat Y_{r,i}}{\sqrt{1-t}}\right)\right]\right\}\\
	=&0\,,
\end{align*}
by construction of $\hat{\mathbf Y}_i$.
Now the second term in \eqref{eq:thm1_main_expansion} reduces to
\begin{align*}
	\sum_{s,r}\sum_i\mathbb E[\partial_s \partial_r h(\mathbf Z^i(t)) D_{s,i}(t) Z_{r,i}^\prime(t)]\,.
\end{align*}
By \Cref{lem:D}, $D_{s,i}$ is $\lstability$-SW.
In $Z_{r,i}'(t)=(K_{r,i}/\sqrt{t}-\hat Y_{r,i}/\sqrt{1-t})/(2\sqrt{n})$, $K_{r,i}$ is $1$-SW by \Cref{ass:lstability}, and $\hat Y_{r,i}\stackrel{d}{=}\sigma_{rr,v_i}\varepsilon_{r,i}$ is also $1$-sub-Weibull as $\sigma_{r,v_i}$ is $1$-sub-Weibull according to the proof of \Cref{lem:cov_concentration}.
 Therefore, $D_{s,i}(t) Z_{r,i}^\prime(t)$ is $n^{-1}\lstability \eta_t$-SW, where
\begin{equation}\label{eq:eta_t}
\eta_t=t^{-1/2}\vee (1-t)^{-1/2}\,.\end{equation}
Now for any $\tau >0$,  by \Cref{lem:subWeibull-tail-int}
\begin{align}
	&\sum_{s,r}\sum_i\mathbb E[\partial_s \partial_r h(\mathbf Z^i(t)) D_{s,i}(t) Z_{r,i}^\prime(t)]\label{eq:SW_application_thm1q2}\\
	= & \sum_{s,r}\sum_i\mathbb E[\partial_s \partial_r h(\mathbf Z^i(t)) D_{s,i}(t) Z_{r,i}^\prime(t)\mathds{1}(|D_{s,i}(t) Z_{r,i}^\prime(t)|\le n^{-1}\tau \lstability\eta_t)]\nonumber\\
	&~~+\sum_{s,r}\sum_i\mathbb E[\partial_s \partial_r h(\mathbf Z^i(t)) D_{s,i}(t) Z_{r,i}^\prime(t)\mathds{1}(|D_{s,i}(t) Z_{r,i}^\prime(t)|> n^{-1}\tau \lstability\eta_t)]\nonumber\\
	\le & n^{-1}\tau \lstability\eta_t\sum_{s,r}\sum_i\mathbb E|\partial_s \partial_r h(\mathbf Z^i(t))| \nonumber \\
	 &~~+M_2\sum_{s,r}\sum_i\mathbb E[|D_{s,i}(t) Z_{r,i}^\prime(t)|\mathds{1}(|D_{s,i}(t) Z_{r,i}^\prime(t)|> n^{-1}\tau  \lstability\eta_t)]\nonumber\\
	 \lesssim & \tau \lstability \eta_t M_2+ np^2 e^{-\tau^c}\,.\nonumber
\end{align}
By choosing $\tau=c_1\log^{c_2}(n+p)$ with appropriate choices of constants $c_1,c_2$ independent of $(n,p)$, \eqref{eq:SW_application_thm1q2} is bounded by $\tilde O(M_2 \lstability\eta_t)$.

The third term in \eqref{eq:thm1_main_expansion} is similarly controlled: let 
$$Q_{rsu,i}=\int_0^1 (1-v) \partial_u \partial_s \partial_r h(\mathbf Z^i(t) + v\mathbf Z_i(t)) dv$$
and
$$
T_{rsu,i} = [Z_{s,i}(t) + D_{s,i}(t)] [Z_{u,i}(t) + D_{u,i}(t)]Z^\prime_{r,i}(t)\,.
$$
By definition of $M_3$ we have
$$
\sum_{r,s,u} |Q_{rsu,i}|\le M_3
$$
and $T_{rsu,i}$ is $n^{-3/2}\eta_t$-SW. Thus the third term is controlled by
$$
\tilde O(n^{-1/2} M_3\eta_t)\,.
$$

Since $\eta_t$ is integrable on $(0,1)$, we have shown that
$$
|\mathbb E h(\mathbf W) - \mathbb E h(\hat{\mathbf Y})| \le \tilde O(\lstability M_2+n^{-1/2}M_3)\,.
$$

Combining \Cref{lem:h_function} and the anti-concentration result\footnote{The anti-concentration result there is for Gaussian processes.  However, our $\hat{\mathbf Y}$ is a Gaussian mixture because $\hat{\mathbf Y}$ is Gaussian only when conditioning on $\mathbf X$. We can condition on $\mathbf X$, provided that $\sup_{r,v}\sigma_{rr,v}\le \tilde O(1)$ with high probability.  
This can be established if $\sigma_{rr,v}$ is $1$-SW, which is implied by the proof of \Cref{lem:cov_concentration}.} 
\citep[][Lemma 2.1]{chernozhukov2013gaussian}, we have for any $0<\beta<n$
\begin{align}
\sup_{z}\left|\mathbb P\left(\max_r W_r \le z\right)-\mathbb P(\max_r \hat Y_r\le z)\right|
\le & \tilde O\left(\lstability\beta^2+n^{-1/2}\beta^3+\beta^{-1}\right)\nonumber\\
\le & \tilde O \left(\lstability^{1/3}\vee n^{-1/8}\right) \,.\label{eq:thm1_max_comp1}
\end{align}
where the last inequality follows by choosing $\beta=\min(\lstability^{-1/3},n^{1/8})$.

To get the final approximation, let
\begin{equation}\label{eq:def_Delta}
\Delta = \max_{r,s,v}|\sigma_{rs,v}-\sigma_{rs}|\,,
\end{equation}
and event
\begin{equation}\label{eq:def_E}\mathcal E=\{\Delta\le c_1\lstability\log^{c_2}(n+p)\}\,,\end{equation}
with appropriately chosen constants $c_1,c_2$ such that, according to \Cref{lem:cov_concentration},
$$
\mathbb P(\mathcal E) \ge 1-n^{-1}\,.
$$
Then
\begin{align}
	\mathbb P\left(\max_r \hat Y_r\le z\right) \le & \mathbb P\left(\max_r \hat Y_r \le z |\mathcal E\right)+\mathbb P\left(\mathcal E^c\right)\nonumber\\
\le & \mathbb P\left(\max_r Y_r\le z\right)+\tilde O(\lstability^{1/3}\log^{1/3}(\lstability^{-1}))+n^{-1}\,,\label{eq:thm1_max_comp2a}
\end{align}
where the last inequality uses Theorem 2 of \cite{chernozhukov2015comparison} between $\hat{\mathbf Y}$ and $\mathbf Y$.
On the other hand we have
\begin{align}
	\mathbb P\left(\max_r \hat Y_r\le z\right) \ge & \mathbb P\left(\max_r \hat Y_r \le z |\mathcal E\right)\mathbb P(\mathcal E)\nonumber\\
	\ge & \left[\mathbb P\left(\max_r Y_r\le Z\right)-\tilde O(\lstability^{1/3}\log^{1/3}(\lstability^{-1}))\right](1-n^{-1})\\
	\ge & \mathbb P\left(\max_r Y_r\le Z\right)-\tilde O(\lstability^{1/3}\log^{1/3}(\lstability^{-1}))\,.\label{eq:thn1_max_comp2b}
\end{align}
The claimed result follows by combining \eqref{eq:thm1_max_comp1}, \eqref{eq:thm1_max_comp2a}, and \eqref{eq:thn1_max_comp2b}.
\end{proof}

\subsection{Proof of \Cref{cor:simul_ci}}
\begin{proof}[Proof of \Cref{cor:simul_ci}]
% Let $\stackrel{\mathcal L}{\approx}$ denote approximately equal in distribution. Our plan is to establish the following chain of approximations.
% \begin{align}
% \left\|\frac{1}{\sqrt{n}}\hat\Lambda^{-1/2}(\hat{\mathbf R}_{\rm cv}-\bar{\mathbf R})\right\|_\infty\stackrel{\mathcal L}{\approx} &
% \left\|\frac{1}{\sqrt{n}}\Lambda^{-1/2}(\hat{\mathbf R}_{\rm cv}-\bar{\mathbf R})\right\|_\infty\nonumber\\
% \stackrel{\mathcal L}{\approx} & \left\|\frac{1}{\sqrt{n}}\Lambda^{-1/2}\sum_i \Theta_{v_i}^{1/2}\varepsilon_i\right\|_\infty\nonumber\\
% \stackrel{\mathcal L}{\approx} & \left\|\frac{1}{\sqrt{n}}\hat\Lambda^{-1/2}\hat\Theta^{1/2}\sum_i \varepsilon_i\right\|_\infty\label{eq:normalized_apprx_chain}
% \end{align}	
First, define event $\mathcal E_1$ on the space of $\mathcal X^n$ as the subset consisting all samples of size $n$ such that
\begin{align*}
\sup_v\|\hat\Sigma_v-\Sigma\|_\infty  \lesssim \tilde O\left(\frac{1}{\sqrt{n}}+\lstability\right)\,,
\end{align*}
where the constants $c_1,c_2$ in the $\tilde O$ notation is omitted. Then
combining \Cref{lem:cov_concentration} and standard sub-Weibull concentration of iid sums we have
$$
\mathbb P(\mathcal E_1)\ge 1-n^{-1}
$$
with appropriate choice of universal constants in $\tilde O(\cdot)$.  Here the $n^{-1/2}$ term comes from $\|\hat\Sigma_v-\Sigma_v\|_\infty$ and the $\lstability$ term comes from $\|\Sigma_v-\Sigma\|_\infty$.

Let 
\begin{align*}
	\delta_0=&\left\|\sqrt{n}\hat\Lambda^{-1/2}(\hat{\mathbf R}_{\rm cv}-\tilde{\mathbf R})\right\|_\infty\,,\\
\delta_1=&\left\|\sqrt{n}\Lambda^{-1/2}(\hat{\mathbf R}_{\rm cv}-\tilde{\mathbf R})\right\|_\infty\,,\\
\delta_2=&\|\mathbf Y\|_\infty\,,~~\mathbf Y\sim N(0,\Lambda^{-1/2}\Sigma\Lambda^{-1/2})\,,\\
\delta_3=&\|\tilde{\mathbf Y}\|_\infty\,,~~\tilde{\mathbf Y}\sim N(0,\hat\Lambda^{-1/2}\hat\Sigma\hat\Lambda^{-1/2})\,.
\end{align*}

On $\mathcal E_1$ we have
$$
|\delta_0-\delta_1|\le \tilde O(n^{-1/2}+\lstability)\,.
$$

Define $\mathcal E_2$ be the event that $\left\|\sqrt{n}(\hat{\mathbf R}_{\rm cv}-\tilde{\mathbf R})\right\|\le 2\sqrt{\log(n+p)}$. Then \Cref{thm:1} implies that
$$
\mathbb P(\mathcal E_2)\ge 1-n^{-1}-\tilde O(n^{-1/8}+\lstability^{1/3})\,.
$$
Then we have the following approximation.
\begin{align*}
\mathbb P( \delta_0\le t)\le & \mathbb P(\delta_0\le t\,,~\mathcal E_1\cap\mathcal E_2)+\mathbb P(\mathcal E_1^c)+\mathbb P(\mathcal E_2^c)\\
\le & \mathbb P(\delta_1\le t+|\delta_1-\delta_0|\,,~\mathcal E_1\cap\mathcal E_2)+\tilde O(n^{-1/8}+\lstability^{1/3})\\
\le & \mathbb P\left[\delta_1\le t+\tilde O(n^{-1/2}+\lstability)\right]+\tilde O(n^{-1/8}+\lstability^{1/3})\\
\le &\mathbb P\left[\delta_2\le t+\tilde O(n^{-1/2}+\lstability)\right]+\tilde O(n^{-1/8}+\lstability^{1/3})\\
\le & \mathbb P\left[\delta_3\le t+\tilde O(n^{-1/2}+\lstability)\big|\mathcal E\right] + \tilde O(n^{-1/8}+\lstability^{1/3})\\
\le & \mathbb P\left[\delta_3\le t\right] + \tilde O(n^{-1/8}+\lstability^{1/3})\,,
\end{align*}
% Let the vectors in \eqref{eq:normalized_apprx_chain} be $\delta_0$, $\delta_1$, $\delta_2$, $\delta_3$, in the order of appearance.
% Then
% \begin{align*}
% \mathbb P(\max \delta_0\le t)\le & \mathbb P\left[\max\delta_0\le t,~\|\delta_1\|_\infty\le 2\sqrt{\log(n+p)}\right]+2(n+p)^{-1}+\tilde O(n^{-1/8}+\lstability^{1/3})\\
% \le & \mathbb P\left[\max\delta_1\le t+\tilde O(n^{-1/2}+\lstability)\right]+\tilde O(n^{-1/8}+\lstability^{1/3})\\
% \le & \mathbb P\left[\max \delta_2\le t+\tilde O(n^{-1/2}+\lstability)\right] + \tilde O(n^{-1/8}+\lstability^{1/3})\\
% \le & \mathbb P\left[\max \delta_3\le t+\tilde O(n^{-1/2}+\lstability)\right] + \tilde O(n^{-1/8}+\lstability^{1/3})\\
% \le & \mathbb P\left[\max \delta_3\le t\right] + \tilde O(n^{-1/8}+\lstability^{1/3})\,,
% \end{align*}
where the third inequality holds because on $\mathcal E_1$ $|\delta_1-\delta_0|\le \tilde O(n^{-1/2}+\lstability)$; the fourth inequality holds by applying \Cref{thm:1} to the scaled loss functions $\ell_r/\sigma_{rr}^{1/2}$; the fifth inequality holds because when conditioning on the event $\mathcal E$, the two Gaussian vectors have covariance matrices differing by at most $\tilde O(n^{-1/2}+\lstability)$ and applying Theorem 2 of \cite{chernozhukov2015comparison}; the last inequality holds by anti-concentration of Gaussian maxima \citep[][Lemma 2.1]{chernozhukov2013gaussian}.

The corresponding lower probability bound of $\mathbb P(\delta_0\le t)$ can be obtained similarly.
\end{proof}

\subsection{Auxiliary lemmas}

\begin{lemma}[Properties of the difference operator]\label{lem:free_nabla}
	Let $f$, $g$ be two functions of the vector $(X_1,...,X_n,X_1',...,X_n')$ such that for some $j\in [n]$, $\mathbb E_{X_j}g=0$ and $f$ is independent of $X_j'$, then
	$$
\mathbb E [f g] = \mathbb E [(\nabla_j f)g]\,.
	$$
\end{lemma}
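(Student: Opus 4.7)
The proof will be a short conditioning argument. Expanding the definition $\nabla_j f(\mathbf X) = f(\mathbf X) - f(\mathbf X^j)$, the claim $\mathbb E[f g] = \mathbb E[(\nabla_j f) g]$ reduces to showing that
\[
\mathbb E[f(\mathbf X^j)\, g] = 0.
\]
The plan is to verify this via the tower property with respect to the $\sigma$-algebra $\mathcal F_{-j}$ generated by all coordinates of $(X_1,\dots,X_n,X_1',\dots,X_n')$ except $X_j$.

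The first step is to observe that $f(\mathbf X^j)$ is $\mathcal F_{-j}$-measurable. Indeed, the operation $\mathbf X \mapsto \mathbf X^j$ substitutes $X_j'$ for $X_j$ in the unprimed coordinates, so every dependence of $f(\mathbf X)$ on $X_j$ is converted into a dependence on $X_j'$ in $f(\mathbf X^j)$. Since $f$ is by hypothesis functionally independent of its $X_j'$ argument, those substituted occurrences contribute nothing, and $f(\mathbf X^j)$ ends up being a function solely of $\{X_i:i\neq j\}$ and $\{X_i':i\neq j\}$, all of which are $\mathcal F_{-j}$-measurable.

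The second and final step is the tower property: conditioning on $\mathcal F_{-j}$, pulling $f(\mathbf X^j)$ outside the inner expectation, and applying the hypothesis $\mathbb E_{X_j}g = 0$,
\begin{align*}
\mathbb E[f(\mathbf X^j)\, g]
= \mathbb E\bigl[\mathbb E[f(\mathbf X^j)\, g \mid \mathcal F_{-j}]\bigr]
= \mathbb E\bigl[f(\mathbf X^j)\, \mathbb E_{X_j} g\bigr]
= 0.
\end{align*}
There is no real obstacle; the lemma is an exercise in careful bookkeeping of which variables each quantity depends on. The only subtle point is to parse the phrase ``$f$ is independent of $X_j'$'' as meaning functional (not probabilistic) independence, which is what allows the conclusion that $f(\mathbf X^j)$ is $\mathcal F_{-j}$-measurable and therefore can be pulled out of the conditional expectation.
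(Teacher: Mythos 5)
Your proof is correct and is essentially the paper's own argument: reduce the claim to $\mathbb E[f(\mathbf X^j)\,g]=0$, observe that $f(\mathbf X^j)$ is measurable with respect to everything except $X_j$, and condition to pull it out and apply $\mathbb E_{X_j}g=0$. One minor bookkeeping slip: $f(\mathbf X^j)$ \emph{does} depend on $X_j'$ (it is substituted into the $X_j$ slot), so it is a function of $\{X_i:i\neq j\}\cup\{X_i':i\in[n]\}$ rather than of $\{X_i:i\neq j\}\cup\{X_i':i\neq j\}$; but since $X_j'$ is contained in your $\mathcal F_{-j}$, the measurability claim and hence the proof are unaffected.
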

\begin{proof}[Proof of \Cref{lem:free_nabla}]
Let $f^j$ be the iid version of $f$ with input $X_j$ replaced by $X_j'$. Now
	It suffices to show that
	$\mathbb E [g f^j] = 0$, which holds true since $\mathbb E[g f^j]=\mathbb E[f^j(\mathbb E_{X_j}g)]=0$.
\end{proof}

\begin{lemma}\label{lem:D} Under \Cref{ass:lstability}, for all $i\in[n]$, $r\in[p]$,
$D_{r,i}$ is $\lstability$-SW.
\end{lemma}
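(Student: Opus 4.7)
My plan is to decompose $D_{r,i}$ as a double sum first across the other folds and then across indices within each fold, and to recognize the inner sum as a martingale with mean-zero sub-Weibull increments, after which Lemma~\ref{lem:MG-subW} (Rio's inequality) gives the correct scaling. Reading the defining display for $D_{r,i}$ as $D_{r,i}=\sum_{j\notin I_{v_i}}\nabla_i K_{r,j}$, I split
\[
D_{r,i}=\sum_{w\neq v_i} S_w,\qquad S_w=\sum_{j\in I_w}\nabla_i K_{r,j},
\]
so it suffices to bound each $S_w$ in $L^q$ and sum over the $V-1$ folds by the triangle inequality.

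Next I would set up the martingale structure for a fixed $w\neq v_i$. Order $I_w=\{j_1<\dots<j_{|I_w|}\}$ and let $\mathcal{F}_0=\sigma(\mathbf X_{-w},X_i')$ and $\mathcal{F}_k=\mathcal{F}_0\vee\sigma(X_{j_1},\dots,X_{j_k})$. Each $\nabla_i K_{r,j_k}$ is $\mathcal{F}_k$-measurable, and because $X_{j_k}$ is independent of $\mathcal{F}_{k-1}$ and $\mathbb E_{X_{j_k}}[\ell_r(X_{j_k};\mathbf X_{-w})]=R_r(\mathbf X_{-w})$ and $\mathbb E_{X_{j_k}}[\ell_r(X_{j_k};\mathbf X_{-w}^i)]=R_r(\mathbf X_{-w}^i)$, the two centering terms in $K_{r,j_k}$ cancel out the two loss terms upon taking $\mathbb E_{X_{j_k}}$, giving $\mathbb E[\nabla_i K_{r,j_k}\mid\mathcal{F}_{k-1}]=0$. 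This is the martingale-difference property required by Lemma~\ref{lem:MG-subW}.

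For the $L^q$ control of increments, I observe by symmetry of $\ell_r$ that $\nabla_i\ell_r(X_{j_k};\mathbf X_{-w})$ is distributed as $\nabla_1\ell_r(X_0;\mathbf X_{-1})$, which is $n^{-1/2}\lstability$-SW by \Cref{ass:lstability}(a); Jensen's inequality then yields the same bound for $\nabla_i R_r(\mathbf X_{-w})=\mathbb E_{X_{j_k}}[\nabla_i\ell_r(X_{j_k};\mathbf X_{-w})]$, so $\|\nabla_i K_{r,j_k}\|_q\lesssim n^{-1/2}\lstability\, q^{1/\alpha}$ uniformly in $k$. Applying \Cref{lem:MG-subW} over the $|I_w|=n/V$ increments gives $\|S_w\|_q\lesssim \sqrt{n/V}\cdot n^{-1/2}\lstability\, q^{1/\alpha'}=\lstability V^{-1/2} q^{1/\alpha'}$ for some $\alpha'>0$. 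Summing over the $V-1$ folds by the triangle inequality in $L^q$ produces $\|D_{r,i}\|_q\lesssim \sqrt{V}\,\lstability\, q^{1/\alpha'}$, which, since $V$ is treated as a constant, is the desired $\lstability$-SW bound via the moment characterization in \Cref{def:sw_new}.

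The only mild subtlety I anticipate is verifying that the martingale-difference property and the increment $L^q$ bound hold with respect to the same filtration in which I invoke Lemma~\ref{lem:MG-subW}; since that lemma only asks for the martingale condition in the filtration generated by the $M_k$ themselves, which is contained in $\mathcal{F}_k$, this is automatic via the tower property. Everything else reduces to bookkeeping with sub-Weibull norms and the triangle inequality, and no assumption beyond \Cref{ass:lstability}(a) and the symmetry part of \Cref{ass:condition_l} is needed.
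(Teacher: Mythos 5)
Your proof is correct, but it takes a genuinely different route from the paper's. The paper proves \Cref{lem:D} by writing $D_{r,i}$ as a Doob martingale over the full filtration that reveals $X_1,\dots,X_n$ one at a time (augmented by $X_i'$), so that each increment $\mathbb E(\tilde\nabla_k D_{r,i}\mid F_{k,i})$ aggregates $\tilde\nabla_k\nabla_i K_{r,j}$ over all $j\notin I_{v_i}$; controlling the roughly $n$ terms with $k\notin\{i,j\}$ at scale $n^{-3/2}\lstability$ each is exactly where the second-order stability \Cref{ass:lstability}(b) enters, and \Cref{lem:MG-subW} over the $n$ increments of size $n^{-1/2}\lstability$ then yields the $\lstability$ scaling. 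You instead group the summands by fold and observe that, conditionally on $(\mathbf X_{-w},X_i')$, the $n/V$ terms $\nabla_i K_{r,j}$, $j\in I_w$, are i.i.d.\ and mean zero, so \Cref{lem:MG-subW} applies within each fold using only the first-order condition \Cref{ass:lstability}(a), and the triangle inequality over the $V-1$ folds costs only a constant. Your verification of the conditional centering (the two $R_r$ terms cancel the two loss terms under $\mathbb E_{X_{j_k}}$) and of the filtration compatibility with \Cref{lem:MG-subW} are both sound, and the Jensen step for $\nabla_i R_r(\mathbf X_{-w})$ is the same device the paper uses implicitly when it invokes closure under addition for $K=\ell-R$. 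The trade-off: your argument is shorter, more transparent about where the $\sqrt{\tilde n}$ cancellation comes from, and establishes the lemma under strictly weaker hypotheses (no second-order stability needed); the paper's argument is uniform in style with its other martingale lemmas (e.g., \Cref{lem:cov_concentration}, \Cref{lem:C_rsi}) and, more substantively, does not pay the $\sqrt{V}$ factor your fold-wise triangle inequality incurs, so it would survive a regime where $V$ grows with $n$ (e.g., leave-one-out), whereas your bound degrades to $\sqrt{V}\,\lstability$. Since the paper treats $V$ as fixed, both arguments deliver the stated conclusion.
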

\begin{proof}[Proof of \Cref{lem:D}]
% This lemma has appeared implicitly in \cite{austern2020asymptotics}. We provide a self-contained breif proof.
% Without loss of generality, consider $i=n$ so that $v_i=V$, and we also drop the subindex $r$. By definition
% \begin{align*}
% \mathbb E D_n^2=&\sum_{j,k=1}^{\tilde n} \nabla_i K_j\nabla_i K_k\,.
% \end{align*}

% Consider two case of the summands.
% Case 1: $j=k$.  There are $\tilde n$ such terms. So the total contribution from these terms are
% bounded by $\tilde n (n^{-1/2}\lstability)^2\le \lstability^2$.

% Case 2: $j\neq k$.  There are less than $\tilde n^2$ such terms.
% \begin{align*}
% \mathbb E \nabla_i K_j\nabla_i K_k = & \mathbb E [(\nabla_k \nabla_i K_j)\nabla_i K_k] =\mathbb E [(\nabla_k \nabla_i K_j)(\nabla_j\nabla_i K_k)]\,,
% \end{align*}
% where each equality follows from an application of \Cref{lem:free_nabla}.
% So the total contribution from these summands is bounded by $\tilde n^2 (n^{-1}\lstability)^2 \le \lstability^2$.

% Summing up, we have $\mathbb E D_n^2\le 2 \lstability^2$.
Let $F_{k,i}$ be the sigma field generated by $F_k$ and $X_i'$ for $k\in [n]$, and $F_{0,i}$ be the sigma field genreated by $X_i'$.
Because $\mathbb E(\nabla_i K_j|X_i')=0$ for all $j\neq I_{v_i}$, we have the following martingale sum representation
$$
D_{r,i}=\sum_{k=1}^{n} \mathbb E(D_{r,i}|F_{k,i})-\mathbb E(D_{r,i}|F_{k-1,i})=\sum_{k=1}^n \mathbb E(\tilde\nabla_k D_{r,i}|F_{k,i})\,,
$$
where $\tilde\nabla_k$ is the same operator as $\nabla_k$ except that it replaces $X_k$ by $X_k''$, a further iid copy.  This is to make sure that the difference operator $\tilde\nabla_i$ does not interfere with $X_i'$, which is already involved in $D_{r,i}$.

For each $k\in[n]$, if $k\in\{i,j\}$, we have that
$\tilde\nabla_k\nabla_i K_j$ is $(\lstability n^{-1/2},\alpha)$-SW by part 1 of \Cref{ass:lstability} and closure of sub-Weibull tails under additions.

For each $k\in[n]\backslash\{i,j\}$, we have that
$\tilde\nabla_k\nabla_i K_j$ is either $0$ (if $k\in I_{v_j}$) or $(\lstability n^{-3/2},\alpha)$-SW (if $k\neq i,j$, by part 2 of \Cref{ass:lstability}).

So overall, we conclude that $\tilde \nabla_k D_{r,i}$ is $(\lstability n^{1/2},\alpha)$-SW.  The claimed result follows from \Cref{lem:MG-subW}.
\end{proof}

\begin{lemma}\label{lem:cov_concentration}
	Under \Cref{ass:lstability}, $\sigma_{rs,1}-\sigma_{rs}$ is $\lstability$-SW.
\end{lemma}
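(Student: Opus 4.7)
The plan is to view $\sigma_{rs,1}$ as a function of the $\tilde n$ training points $\mathbf X_{-1}=(X_i: i\notin I_1)$, decompose its centered version into a martingale sum, bound each martingale increment in sub-Weibull norm via \Cref{ass:lstability}, and finally invoke \Cref{lem:MG-subW}.

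First, I would choose an arbitrary ordering of the indices in $[n]\setminus I_1$ and let $\mathcal F_k$ be the $\sigma$-field generated by the first $k$ of these $X_i$'s. Setting $f(\mathbf X_{-1})=\sigma_{rs,1}$, the standard telescoping
\[
f - \mathbb E f = \sum_{i} M_i\,,\qquad M_i = \mathbb E[f\mid\mathcal F_i]-\mathbb E[f\mid\mathcal F_{i-1}]
\]
gives a martingale difference sequence. A standard computation (using that $\mathbb E[f\mid\mathcal F_{i-1}]=\mathbb E_{X_i}[f]$ as a $\mathcal F_i$-measurable object) identifies $M_i = \mathbb E[\mathbb E_{X_i'}\nabla_i f\mid \mathcal F_i]$, so by Jensen and tower we get $\|M_i\|_q\le \|\nabla_i f\|_q$ for all $q\ge 1$.

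Next, I would bound $\nabla_i \sigma_{rs,1}$ in sub-Weibull norm. Using $R_r(\mathbf X_{-1})=\mathbb E_{X_0}\ell_r(X_0;\mathbf X_{-1})$ we can rewrite
\[
\sigma_{rs,1}=\mathbb E_{X_0}\bigl[K_{r,0,1}\,K_{s,0,1}\bigr]\,,
\]
and a product-rule expansion gives
\[
\nabla_i \sigma_{rs,1} = \mathbb E_{X_0}\!\left[(\nabla_i K_{r,0,1})\,K_{s,0,1} + K_{r,0,1}^{\,i}\,(\nabla_i K_{s,0,1})\right].
\]
Since $\nabla_i K_{r,0,1}=\nabla_i \ell_r(X_0;\mathbf X_{-1})-\nabla_i R_r(\mathbf X_{-1})$ and Jensen preserves sub-Weibull scalings under $\mathbb E_{X_0}$, \Cref{ass:lstability}(a) implies $\nabla_i K_{r,0,1}$ is $n^{-1/2}\lstability$-SW. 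Together with \Cref{ass:lstability}(c) implying $K_{s,0,1}$ is $1$-SW, \Cref{pro:basic_SW} gives that the product is $n^{-1/2}\lstability$-SW, and a second application of Jensen shows $\nabla_i \sigma_{rs,1}$ is $n^{-1/2}\lstability$-SW.

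Finally, combining $\|M_i\|_q\le \|\nabla_i \sigma_{rs,1}\|_q \lesssim n^{-1/2}\lstability\, q^{1/\alpha}$ with \Cref{lem:MG-subW} applied to the $\tilde n\asymp n$ martingale differences yields
\[
\bigl\|\sigma_{rs,1}-\sigma_{rs}\bigr\|_q\lesssim \sqrt{\tilde n}\cdot n^{-1/2}\lstability\, q^{1/\alpha'}\lesssim \lstability\, q^{1/\alpha'}\,,
\]
which is exactly the definition of $\lstability$-SW (\Cref{def:sw_new}). The only delicate step is the product-rule bookkeeping for $\nabla_i$ of the covariance kernel and the verification that taking $\mathbb E_{X_0}$ before and after the difference operator does not lose a factor; both of these follow from Jensen, so I do not anticipate any genuine obstacle.
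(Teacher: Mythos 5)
Your proposal is correct and follows essentially the same route as the paper: a martingale decomposition of $\sigma_{rs,1}-\sigma_{rs}$ over the training points, a product-rule expansion of $\nabla_i\sigma_{rs,1}$ into $(\nabla_i K_r)K_s + K_r^i(\nabla_i K_s)$ controlled via \Cref{ass:lstability} and \Cref{pro:basic_SW} (with Jensen absorbing the $\mathbb E_{X_0}$ and the conditional expectations), and a final application of \Cref{lem:MG-subW} to the $\tilde n\asymp n$ increments of scale $n^{-1/2}\lstability$. No gaps.
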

\begin{proof}[Proof of \Cref{lem:cov_concentration}]
Without loss of generality, we work with $v=1$. First write $\sigma_{rs,1}-\sigma_{rs}$ as the sum of
martingale increments
	\begin{align}
	\sigma_{rs,1}-\sigma_{rs} = \sum_{i=1}^{\tilde n}\mathbb E(\nabla_i \sigma_{rs,1}|F_i)\,.\label{eq:mg_decomp_sigmav}
	\end{align}
Next we control each $\mathbb E(\nabla_i \sigma_{rs,1}|F_i)$.
Let $\|\cdot\|$ be any $L_q$ norm with  $q\ge 1$,
\begin{align*}
	&\|\mathbb E(\nabla_i \sigma_{rs,1}|F_i)\|\le  \|\nabla_i \sigma_{rs,1}\|\\
\le & \left\|\mathbb E_0\left( K_r(X_0,\mathbf X_{-1})K_s(X_0,\mathbf X_{-1})-K_r(X_0,\mathbf X_{-1}^i)K_s(X_0,\mathbf X_{-1}^i)\right)\right\|\\
\le &\left\|K_r(X_0,\mathbf X_{-1})K_s(X_0,\mathbf X_{-1})-K_r(X_0,\mathbf X_{-1}^i)K_s(X_0,\mathbf X_{-1}^i)\right\|\\
\le & \left\|\left[\nabla_i K(X_0,\mathbf X_{-1})\right] K_s(X_0,\mathbf X_{-1})\right\|+\left\|K_r(X_0,\mathbf X_{-1}^i)\left[\nabla_i K_s(X_0,\mathbf X_{-1})\right] \right\|\,.
\end{align*}
Then it follows from \Cref{ass:lstability} and \Cref{pro:basic_SW} that 
$\mathbb E(\nabla_i \sigma_{rs,1}|F_i)$ is $\lstability n^{-1/2}$-SW.
Further applying \Cref{lem:MG-subW} to the martingale sum \eqref{eq:mg_decomp_sigmav} we conclude that
$\sigma_{rs,1}-\sigma_{rs}$ is $\lstability$-SW.
\end{proof}

% \begin{lemma}\label{lem:sigma_sw}
% 	Under \Cref{ass:sw}, $\sigma_{r,v}^2$ is sub-Weibull.
% \end{lemma}
% \begin{proof}[Proof of \Cref{lem:sigma_sw}]
% By symmetry, we only need to prove for $v=1$.
% By construction, $\sigma_{r,1}^2 = \mathbb E[K_{r,1}^2|\mathbf X_{-1}]$. Therefore,
% $\sigma_{r,1}^2$ has a tail that is no heavier than $K_{r,1}^2$ as $\|\sigma_{r,1}^2\|_q\le \|K_{r,1}^2\|_q$ for all $q\ge 1$. The desired result follows immediately from the assumption that $K_{r,i}$ is sub-Weibull.
% \end{proof}

\begin{lemma}[Bridging between smooth function and CDF of maximum]\label{lem:h_function}
For any $\beta>0$, there exists a function $h=h_{\beta}:\mathbb R^p\mapsto \mathbb R$, such that for any random vector $\mathbf Z\in\mathbb R^p$,
$$
\mathbb P\left(\max_{r} Z_r\le t\right) \le \mathbb E h(\mathbf Z)\le \mathbb P\left(\max_{r} Z_r\le t+\frac{\log p+1}{\beta}\right)
$$
and, for some universal constant $C$,
\begin{align*}
M_2(h)=&\sup_{z\in\mathbb R^p}\sum_{r,s=1}^p|\partial_r\partial_s h(z)|\le C \beta^2\,,\\
M_3(h)=&\sup_{z\in\mathbb R^p}\sum_{r,s,u=1}^p|\partial_r\partial_s \partial_u h(z)|\le C \beta^3\,.
\end{align*}
\end{lemma}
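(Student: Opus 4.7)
The plan is to use the standard softmax smoothing construction of \cite{chernozhukov2013gaussian}. The function $h$ will be the composition of a one-dimensional smooth cutoff with the log-sum-exp approximation of the maximum, and the derivative bounds will follow from routine chain-rule bookkeeping using the fact that the gradient of the softmax is a probability vector.

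First, I would introduce the softmax
$$F_\beta(z) = \beta^{-1}\log\sum_{r=1}^p e^{\beta z_r},$$
which satisfies the standard envelope $\max_r z_r \le F_\beta(z) \le \max_r z_r + \beta^{-1}\log p$. Next, fix once and for all a smooth non-increasing $\varphi_0:\mathbb R\to[0,1]$ with $\varphi_0\equiv 1$ on $(-\infty,0]$, $\varphi_0\equiv 0$ on $[1,\infty)$, and $\|\varphi_0^{(k)}\|_\infty\le C_k$ for $k=1,2,3$ (obtained by mollifying the indicator of $(-\infty,0]$ with a compactly supported bump). Finally, set
$$h(z) \;=\; \varphi_0\bigl(\beta F_\beta(z) - \beta t - \log p\bigr).$$

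The sandwich inequality is immediate: if $\max_r z_r \le t$ then $F_\beta(z)\le t+\beta^{-1}\log p$, so the argument of $\varphi_0$ is $\le 0$ and $h(z)=1$; if $\max_r z_r > t + (\log p + 1)/\beta$ then $F_\beta(z) > t + (\log p + 1)/\beta$, so the argument of $\varphi_0$ is $>1$ and $h(z)=0$. Integrating the resulting pointwise inequalities $\mathbf{1}(\max_r z_r\le t)\le h(z)\le \mathbf{1}(\max_r z_r\le t+(\log p+1)/\beta)$ against the law of $\mathbf Z$ yields the two claimed probability bounds.

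For the derivative bounds, write $\pi_r(z) = e^{\beta z_r}/\sum_s e^{\beta z_s}$. A direct computation gives $\partial_r F_\beta = \pi_r$, $\partial_r\partial_s F_\beta = \beta(\delta_{rs}\pi_r-\pi_r\pi_s)$, and an analogous degree-three polynomial in $\pi$ multiplied by $\beta^2$ for the third derivative. Since $\pi$ is a probability vector, $\sum_r\pi_r=1$ immediately gives $\sum_{r,s}|\partial_r\partial_s F_\beta|\le 2\beta$ and $\sum_{r,s,u}|\partial_r\partial_s\partial_u F_\beta|\lesssim \beta^2$. Applying the chain rule to $h$ produces, schematically,
$$\partial_r h = \beta\varphi_0'\pi_r,\quad \partial_r\partial_s h = \beta^2\varphi_0''\pi_r\pi_s + \beta\varphi_0'\partial_r\partial_s F_\beta,$$
and a three-term analogue for the third derivative. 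Summing over indices, each block is bounded using $\sum_r\pi_r=1$ together with the uniform bounds on $\varphi_0',\varphi_0'',\varphi_0'''$ and the softmax derivative bounds just mentioned, yielding $M_2(h)\lesssim \beta^2$ and $M_3(h)\lesssim \beta^3$.

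The only mild obstacle is the third-derivative bookkeeping, since one must expand $\partial_r\partial_s\partial_u h$ into three types of terms (a single $\varphi_0'''$ term of order $\beta^3$, a mixed $\varphi_0''$ term of order $\beta^2\cdot\beta=\beta^3$, and a $\varphi_0'$ term of order $\beta\cdot\beta^2=\beta^3$) and verify that each summed block contributes at most $C\beta^3$ after using $\sum_r\pi_r=1$ and the softmax derivative estimates. This is routine but the only nontrivial step; everything else is a direct consequence of the softmax envelope and the chain rule.
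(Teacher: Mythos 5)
Your construction is correct and is exactly the one the paper relies on: the paper's proof of this lemma is simply a citation to Lemma A.5 and Corollary I.1 of \cite{chernozhukov2013gaussian}, and those results are proved via precisely your composition of the log-sum-exp approximation with a smooth one-dimensional cutoff, with the derivative bounds following from $\sum_r \pi_r = 1$. So you have filled in the details of the cited argument rather than taken a different route; the bookkeeping for $M_2$ and $M_3$ checks out.
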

\begin{proof}[Proof of \Cref{lem:h_function}]
See Lemma A.5 and Corollary I.1 of \cite{chernozhukov2013gaussian}.	
\end{proof}

\section{Proofs for stability condition examples}
%!TEX root = ./hd_cv_clt.tex

\subsection{Proof of \Cref{prop:sgd1}}

\Cref{prop:sgd1} is a direct consequence of \Cref{lem:first_order_err}. 

\begin{lemma}[Strongly convex first order error]
  \label{lem:first_order_err}
  For $t\ge i$,
  $$\|\nabla_i\hat\theta_t\|\le \frac{2L}{\beta}i^{-a}\exp\left[-c_{a,\beta,\gamma}((t+1)^{1-a}-(i+1)^{1-a})\right]\,,$$
  where $c_{a,\beta,\gamma}=2\gamma/[(1-a)(\beta+\gamma)]$.
\end{lemma}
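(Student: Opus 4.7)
The plan is to induct on $t$ with $i$ held fixed, tracking how a single replaced sample propagates through the SGD trajectory. For $t<i$ the two trajectories agree, so $\nabla_i\hat\theta_t=0$. At $t=i$ they diverge for the first time: both iterates start from the shared $\hat\theta_{i-1}$ and differ only by the gradient evaluated at $X_i$ versus $X_i'$, giving
\[
\|\nabla_i\hat\theta_i\|=\alpha_i\bigl\|\dot\psi(\hat\theta_{i-1};X_i')-\dot\psi(\hat\theta_{i-1};X_i)\bigr\|\le 2L\alpha_i=\tfrac{2L}{\beta}i^{-a},
\]
where the bound $\|\dot\psi(\cdot;x)\|\le L$ is implied by the $L$-Lipschitz part of \Cref{asm:sgd1}(a). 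This matches the claimed bound at $t=i$ because the exponential factor equals $1$ there.

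The inductive step exploits that for $t>i$ both trajectories consume the same fresh data point $X_{t+1}$: writing the common update map $G_{t+1}(\theta)\coloneqq\theta-\alpha_{t+1}\dot\psi(\theta;X_{t+1})$, we have $\nabla_i\hat\theta_{t+1}=G_{t+1}(\hat\theta_t)-G_{t+1}(\hat\theta_t^i)$, so it suffices to establish a per-step contraction. Combining $\gamma$-strong convexity with $\beta$-smoothness of $\psi(\cdot;X_{t+1})$ yields the standard co-coercivity inequality
\[
\langle\theta_1-\theta_2,\dot\psi(\theta_1;X_{t+1})-\dot\psi(\theta_2;X_{t+1})\rangle\ge \tfrac{\gamma\beta}{\gamma+\beta}\|\theta_1-\theta_2\|^2+\tfrac{1}{\gamma+\beta}\|\dot\psi(\theta_1;X_{t+1})-\dot\psi(\theta_2;X_{t+1})\|^2.
\]
Expanding $\|G_{t+1}(\theta_1)-G_{t+1}(\theta_2)\|^2$ and inserting this inequality, while noting that the step size schedule $\alpha_{t+1}=(t+1)^{-a}\beta^{-1}\le 1/\beta\le 2/(\gamma+\beta)$ kills the coefficient of the gradient-norm term, gives
\[
\|G_{t+1}(\theta_1)-G_{t+1}(\theta_2)\|\le \sqrt{1-\tfrac{2\alpha_{t+1}\gamma\beta}{\gamma+\beta}}\;\|\theta_1-\theta_2\|\le \exp\!\Bigl(-\tfrac{\gamma (t+1)^{-a}}{\gamma+\beta}\Bigr)\|\theta_1-\theta_2\|,
\]
where the last step uses $\sqrt{1-x}\le e^{-x/2}$.

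Telescoping this per-step contraction from $i$ up to $t$ produces
\[
\|\nabla_i\hat\theta_t\|\le \tfrac{2L}{\beta}i^{-a}\exp\!\Bigl(-\tfrac{\gamma}{\gamma+\beta}\sum_{s=i+1}^{t}s^{-a}\Bigr),
\]
and a standard Riemann-sum comparison $\sum_{s=i+1}^{t}s^{-a}\ge \int_{i+1}^{t+1}x^{-a}\,dx=\tfrac{(t+1)^{1-a}-(i+1)^{1-a}}{1-a}$, valid since $a\in(0,1)$, delivers the stated exponential factor and identifies $c_{a,\beta,\gamma}$ up to a universal prefactor in the exponent (the precise constant may be tightened by a slightly sharper contraction bound for $G_{t+1}$, but the $\gamma/[(1-a)(\gamma+\beta)]$ scaling is dictated by the co-coercivity constant and the integral). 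The main delicate point is extracting a genuinely contractive factor for $G_{t+1}$: strong convexity alone yields the non-contractive estimate $1-2\alpha\gamma+\alpha^2\beta^2$, and it is the combination with the smoothness Assumption~\ref{asm:sgd1}(b), through co-coercivity, that allows the correct $O(\alpha_{t+1})$-rate of geometric decay regardless of how $\alpha_{t+1}\beta$ compares to unity.
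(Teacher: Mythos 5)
Your proof is correct and follows essentially the same route as the paper's: an initial divergence bound of $2L\alpha_i=\tfrac{2L}{\beta}i^{-a}$, a per-step contraction of the SGD update map obtained from strong convexity plus smoothness (the paper cites Lemma 3.7 of \citet{hardt2016train}, which you re-derive via co-coercivity), and a telescoping product bounded by $\exp$ of a Riemann-sum/integral comparison. The only discrepancy is the constant in the exponent: your derivation yields $\gamma/[(1-a)(\beta+\gamma)]$ rather than the stated $c_{a,\beta,\gamma}=2\gamma/[(1-a)(\beta+\gamma)]$, because the squared contraction $1-\tfrac{2\alpha\beta\gamma}{\beta+\gamma}$ loses a factor of two upon taking the square root, whereas the paper applies the factor $1-\tfrac{2\beta\gamma}{\beta+\gamma}\alpha_t$ directly to the norm (slightly stronger than what co-coercivity alone gives); this factor of two is immaterial downstream since only the qualitative decay of the exponential is used.
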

\begin{proof}
    By construction, for each $t\ge 1$, $\alpha_t$ satisfies
$$
\alpha_t\le \frac{2}{\beta+\gamma}
$$
and according to Lemma 3.7 of \cite{hardt2016train} then the SGD update at step $t$ satisfies  $\|G_{\psi,\alpha_t}(\theta,x)-G_{\psi,\alpha_t}(\theta',x)\|\le (1-\frac{2\beta\gamma}{\beta+\gamma}\alpha_t)\|\theta-\theta'\|$ for parameter update function $G:\Theta \rightarrow \Theta$.

Then for any $t\ge i$,
\begin{align*}
\|\nabla_i\hat\theta_t\|\le & \frac{2L}{\beta} i^{-a} \prod_{k=j+1}^t\left(1-\frac{2\beta\gamma}{\beta+\gamma}\alpha_k\right)  \\
 = & \frac{2L}{\beta} i^{-a}\exp\left(\sum_{k=i+1}^t \log\left(1-\frac{2\beta\gamma}{\beta+\gamma}\alpha_k\right)\right)\\
 \le & \frac{2L}{\beta} i^{-a}\exp\left(-\frac{2\beta\gamma}{\beta+\gamma}\sum_{k=i+1}^t \alpha_k\right)\\
 = &\frac{2L}{\beta} i^{-a}\exp\left(-\frac{2\gamma}{\beta+\gamma}\sum_{k=i+1}^t k^{-a}\right)\\
 \le &\frac{2L}{\beta} i^{-a}\exp\left(-\frac{2\gamma}{\beta+\gamma}\int_{j+1}^{t+1} x^{-a}dx\right)\\
 = &\frac{2L}{\beta} i^{-a}\exp\left(-\frac{2\gamma}{(1-a)(\beta+\gamma)}\left((t+1)^{1-a}-(i+1)^{1-a}\right)\right)\,.\qedhere
\end{align*}
\end{proof}

\begin{proof}[Proof of \Cref{prop:sgd1}]
First we apply \Cref{lem:first_order_err}.

When $i> n/2$, we have $\|\nabla_i\hat\theta_n\|\le \frac{2L}{\beta}(n/2)^{-a}$.

 When $i\le n/2$, we have $\|\nabla_i\hat\theta_n\|\lesssim \frac{L}{\beta}\exp(-c_a \frac{\gamma}{\beta}n^{1-a})$
with $c_a=\frac{1-2^{-(1-a)}}{1-a}$.

So we conclude that
$$
\|\nabla_i\hat\theta\|\lesssim \frac{L}{\beta}n^{-a}
$$
whenever $\frac{\gamma}{\beta}\ge \frac{a(1-a)}{1-2^{-(1-a)}}\frac{\log n}{n^{(1-a)}}$.
\end{proof}

\subsection{Proof of \Cref{prop:sgd2}}
\begin{proof}
Without loss of generality, we assume that the data points arrive in their natural indexed order. Let $\hat \theta = \hat \theta(\mathbf X)$ be the solution using the entire original data sequence  and denote $\hat \theta^i = \hat \theta(\mathbf X^i)$ and $\hat \theta^{ij} = \hat \theta(\mathbf X^{i,j})$, where $\mathbf X^{i,j}$ is the vector obtained by replacing $X_j$ in $\mathbf X^i$ with its iid copy $X^\prime_j$.
% Denote the following
% \begin{itemize}
%   \item $\hat\theta$: the solution from the original data sequence
%   \item $\hat\theta^i$: the solution with $X_i'$
%   \item $\hat\theta^j$: the solution with $X_j'$
%   \item $\hat\theta^{ij}$: the solution with $X_i',~X_j'$  
% \end{itemize}

Then
\begin{align*}
\left\|\nabla_i\nabla_j\hat\theta_{j}\right\|= & \left|\hat\theta_{j} - \hat\theta^i_{j} - \hat\theta^j_{j}+\hat\theta^{ij}_{j}\right|\\
=&\big\|\hat\theta_{j-1}-\alpha_{j}\dot\psi(\hat\theta_{j-1};X_j) -
\hat\theta^i_{j-1}+\alpha_{j}\dot\psi(\hat\theta^i_{j-1};X_j) \\
&-\hat\theta_{j-1}+\alpha_{j}\dot\psi(\hat\theta_{j-1};X_j') +
\hat\theta^i_{j-1}-\alpha_{j}\dot\psi(\hat\theta^i_{j-1};X_j')\big\| \\
=&\alpha_{j}\left\|\dot\psi(\hat\theta_{j-1};X_j)-\dot\psi(\hat\theta^i_{j-1};X_j)-\dot\psi(\hat\theta_{j-1};X_j')+\dot\psi(\hat\theta^i_{j-1};X_j')\right\|\\
\le & 2\beta\alpha_{j}  \|\nabla_i\hat\theta_{j-1}\|.
\end{align*}
For $t>j$,
\begin{align*}
  \nabla_i\nabla_j\hat\theta_{t}= & \hat\theta_{t} - \hat\theta^i_{t} - \hat\theta^j_{t}+\hat\theta^{ij}_{t}\\
=&\hat\theta_{t-1}-\alpha_{t}\dot\psi(\hat\theta_{t};X_t) -
\hat\theta^i_{t-1}+\alpha_{t}\dot\psi(\hat\theta^i_{t-1};X_t) \\
&-\hat\theta^j_{t-1}+\alpha_{t}\dot\psi(\hat\theta^j_{t-1};X_t) +
\hat\theta^{ij}_{t-1}-\alpha_{t}\dot\psi(\hat\theta^{ij}_{t-1};X_t) \\
=&\nabla_i\nabla_j \hat\theta_{t-1} - \alpha_t\left[\dot\psi(\hat\theta_{t-1},X_t)-\dot\psi(\hat\theta_{t-1}^i,X_t)-\dot\psi(\hat\theta_{t-1}^j,X_t)+\dot\psi(\hat\theta_{t-1}^{ij},X_t)\right]\,.
\end{align*}
Using the mean value theorem, there exist $\tilde\theta_{t-1}\in[\hat\theta_{t-1},\hat\theta_{t-1}^i]$ and $\tilde\theta_{t-1}'\in[\hat\theta_{t-1}^j,\hat\theta_{t-1}^{ij}]$ such that
\begin{align*}
&  \dot\psi(\hat\theta_{t-1},X_t)-\dot\psi(\hat\theta_{t-1}^i,X_t)-\dot\psi(\hat\theta_{t-1}^j,X_t)+\dot\psi(\hat\theta_{t-1}^{ij},X_t)\\
=& \ddot\psi(\tilde \theta_{t-1},X_t)(\hat\theta_{t-1}-\hat\theta_{t-1}^i)-\ddot\psi(\tilde\theta_{t-1}',X_t)(\hat\theta_{t-1}^j-\hat\theta_{t-1}^{ij})\\
=&\ddot\psi(\tilde \theta_{t-1},X_t) \nabla_i\nabla_j\hat\theta_{t-1}+\left[\ddot\psi(\tilde\theta_{t-1},X_t)-\ddot\psi(\tilde\theta_{t-1}',X_t)\right](\hat\theta_{t-1}^j-\hat\theta_{t-1}^{ij})
\end{align*}

Plugging this to the RHS of the previous equation, we get
\begin{align*}
  \nabla_i\nabla_j\hat\theta_t = \left[I-\alpha_t \ddot\psi(\tilde \theta_{t-1},X_t)\right] \nabla_i\nabla_j\hat\theta_{t-1}+\alpha_t\left[\ddot\psi(\tilde\theta_{t-1},X_t)-\ddot\psi(\tilde\theta_{t-1}',X_t)\right](\hat\theta_{t-1}^j-\hat\theta_{t-1}^{ij})\,.
\end{align*}

Let $\eta_t=\|\nabla_i\nabla_j\hat\theta_t\|$, we get
\begin{align*}
\eta_t\le (1-\alpha_t\gamma)\eta_{t-1}+2L_2 \alpha_t\|\tilde \theta_{t-1}-\tilde\theta_{t-1}'\|\cdot\|(\hat\theta_{t-1}^j-\hat\theta_{t-1}^{ij})\|,
\end{align*}
where the second term on the RHS uses Lipschitz property of $\ddot\psi$ (with Lipschitz constant $L_2$).

To control $\tilde\theta_{t-1}-\tilde\theta_{t-1}'$, we have
\begin{align*}
  \|\tilde\theta_{t-1}-\tilde\theta_{t-1}'\|\le & \|\hat\theta_{t-1}-\hat\theta_{t-1}^j\|+\|\hat\theta_{t-1}-\hat\theta_{t-1}^{ij}\|+\|\hat\theta_{t-1}^i-\hat\theta_{t-1}^j\|+\|\hat\theta_{t-1}^i-\hat\theta_{t-1}^{ij}\|\\
\le & 4 \frac{2L}{\beta}j^{-a}\exp\left[-c_{a,\beta,\gamma}(t^{1-a}-(j+1)^{1-a})\right]\\
&~~+
2\frac{2L}{\beta}i^{-a}\exp\left[-c_{a,\beta,\gamma}(t^{1-a}-(i+1)^{1-a})\right]\,,
\end{align*}
by \Cref{lem:first_order_err} directly on $\hat\theta_{t-1}-\hat\theta_{t-1}^i$ and $\hat\theta_{t-1}^i-\hat\theta_{t-1}^{ij}$, and also both $\hat\theta_{t-1}-\hat\theta_{t-1}^{ij}=\hat\theta_{t-1}-\hat\theta_{t-1}^i+\hat\theta_{t-1}^i-\hat\theta_{t-1}^{ij}$ and $\hat\theta^i_{t-1}-\hat\theta_{t-1}^{j} = \hat\theta_{t-1}^i-\hat\theta_{t-1}^{ij}+\hat\theta_{t-1}^{ij}-\hat\theta_{t-1}^j$.

Let (for $t\ge j+1$)
\begin{align*}
R_{i,j,t}=&\frac{16L^2 L_2}{\beta^3}t^{-a}i^{-a}\exp\left[-c(t^{1-a}-(i+1)^{1-a})\right]j^{-a}\exp\left[-c(t^{1-a}-(j+1)^{1-a})\right]\\
&+~~\frac{8L^2 L_2}{\beta^3}t^{-a}i^{-2a}\exp\left[-2c(t^{1-a}-(i+1)^{1-a})\right].
\end{align*}
Therefore, we establish the recursive inequality,
$$
\eta_{t}\le (1-\alpha_{t}\gamma)\eta_{t-1}+R_{i,j,t}\,,
$$
for $t\ge j+1$.

We now seek to upperbound $\eta_t$. To do so, let $c'=\gamma/[(1-a)\beta]\le c_{a,\beta,\gamma}$. Thus
\begin{align}
\eta_n \le & (1-\alpha_{n}\gamma)\eta_{n-1}+R_{i,j,n}\nonumber\\
\le & (1-\alpha_{n}\gamma)\left[(1-\alpha_{n-1}\gamma)\eta_{n-2}+R_{i,j,n-1}\right]+R_{i,j,n}\nonumber\\
% \le & \cdots\\
\le & \left[\prod_{k=j+1}^n(1-\alpha_j\gamma)\right] \eta_j + \sum_{k=j+1}^{n} \left[\prod_{l=k+1}^n(1-\alpha_l\gamma)\right] R_{i,j,k}\nonumber\\
\begin{split}\label{eqn:etanbound}
\le & \exp\left[-c'((n+1)^{1-a}-(j+1)^{1-a})\right]\eta_j\\
&+ ~~\sum_{k=j+1}^{n} \exp\left[-c'((n+1)^{1-a}-(k+1)^{1-a})\right] R_{i,j,k}\;.
\end{split}
\end{align}
The final line appeals to the fact that for positive integers $j<t$, $\prod_{j+1}^t (1-\alpha_j \gamma)\le \exp\left[-\frac{\gamma}{(1-a)\beta}\left((t+1)^{1-a}-(j+1)^{1-a}\right)\right]$.

There are two terms in $R_{i,j,k}$, the contribution from the first term in the sum term of \eqref{eqn:etanbound} is
\begin{align*}
 &\frac{16 L^2 L_2}{\beta^3}i^{-a}j^{-a} \sum_{k=j+1}^n k^{-a} \exp\left[-c'((n+1)^{1-a}-(k+1)^{1-a})\right]\\
&\times
 \exp\left[-c'(k^{1-a}-(j+1)^{1-a})\right]\exp\left[-c'(k^{1-a}-(i+1)^{1-a})\right]\\
 \le & \frac{16 L^2 L_2}{\beta^3}i^{-a}j^{-a}\exp\left[-c'((n+1)^{1-a}-(i+1)^{1-a})\right] \\
 &\times \exp\left[2c'(2^{1-a}-1)\right] \sum_{k=j+1}^n k^{-a}\\
 \le & \frac{16 L^2 L_2 e^{2}}{\beta^3}i^{-a}j^{-a}\exp\left[-c'((n+1)^{1-a}-(i+1)^{1-a})\right] \sum_{k=j+1}^n k^{-a}\\
 \le &\frac{16 L^2 L_2e^{2}}{(1-a)\beta^3}i^{-a}j^{-a}\exp\left[-c'((n+1)^{1-a}-(i+1)^{1-a})\right](n^{1-a}-j^{1-a})
\end{align*}
the first inequalty uses the facts that $k\ge j+1$ and that $\exp(c'(k+1)^{1-a}-c'k^{1-a})\le \exp((2^{1-a}-1)c')$ since $k\ge 1$ and $1-a\in(0,1)$.

Let $c_2$ be a constant depending only on $a$.
When $i\le n-c_2 n^a \log n$, we have
\begin{align*}
  (n+1)^{1-a}-(i+1)^{1-a} = & (n+1)^{1-a}\left[1-\left(1-\frac{n-i}{n+1}\right)^{1-a}\right]\\
  \ge & (n+1)^{1-a}(1-a)\frac{n-i}{n+1}\\
  \ge & c_2(1-a) \log n\,,
\end{align*}
where the second lines uses $(1-x)^b\le 1-bx$ for $x\in[0,1)$ and $b\in(0,1)$. 

When $c_2\ge (3-a)/[c'(1-a)]$,
\begin{align*}
    & \frac{16 L^2 L_2e^{2}}{(1-a)\beta^3}i^{-a}j^{-a}\exp\left[-c'((n+1)^{1-a}-(i+1)^{1-a})\right](n^{1-a}-j^{1-a}) \\
    \lesssim &  \frac{L^2L_2}{(1-a)\beta^3}n^{-2} \\
    \leq & \frac{L^2L_2}{(1-a)\beta^3}n^{-2a}\log n \,.
\end{align*}
for large $n$.

When $i\ge n-c_2 n^a\log n\ge n/2$, then total number of terms in the sum over $k$ is at most $c_2 n^a\log n$, but
$i,j$ are both lower bounded by $n/2$. So 
\begin{align*}
     & \frac{16 L^2 L_2 e^{2}}{\beta^3}i^{-a}j^{-a}\exp\left[-c'((n+1)^{1-a}-(i+1)^{1-a})\right] \sum_{k=j+1}^n k^{-a}\\
    \lesssim & \frac{L^2 L_2}{\beta^3} n^{-2a} (c_2 n^a \log n )n^{-a}\\
    \leq & \frac{L^2 L_2}{(1-a)\beta^3} c_2 n^{-2a}  \log n 
\end{align*}

For the second term in $R_{i,j,k}$, the sum in \eqref{eqn:etanbound} simplifies to
\begin{align*}
  &\frac{8 L^2 L_2}{\beta^3}i^{-2a} \sum_{k=j+1}^n k^{-a} \exp\left[-c'((n+1)^{1-a}-(k+1)^{1-a})\right]
 \exp\left[-2c'(k^{1-a}-(i+1)^{1-a})\right]\\
 \lesssim & \frac{8 L^2 L_2}{\beta^3}i^{-2a} \sum_{k=j+1}^n k^{-a} \exp\left[-c'((n+1)^{1-a}-(i+1)^{1-a})\right]\\
 \lesssim & \frac{L^2L_2}{(1-a)\beta^3}c_2 n^{-2a}\log n\,.
\end{align*}
where the final line uses the same argument as the first term. Therefore, we have the following bound  
$$
\sum_{k=j+1}^{n} \exp\left[-c'((n+1)^{1-a}-(k+1)^{1-a})\right] R_{i,j,k} \lesssim \frac{L^2L_2}{(1-a)\beta^3}c_2 n^{-2a}\log n\,.$$

Lastly, to finish bounding \eqref{eqn:etanbound} the contribution from $\eta_j$ is
\begin{align*}
  &\exp\left[-c'((n+1)^{1-a}-(j+1)^{1-a})\right]\eta_j\\
\le & 2\beta\alpha_j \|\nabla_i\hat\theta_{j-1}\|\exp\left[-c'((n+1)^{1-a}-(j+1)^{1-a})\right]\\
\le & \frac{4L}{\beta}j^{-a}i^{-a}\exp\left[-c'\left(j^{1-a}-(i+1)^{1-a}\right)\right]\exp\left[-c'((n+1)^{1-a}-(j+1)^{1-a})\right]\\
\lesssim & \frac{4L}{\beta}j^{-a}i^{-a}\exp\left[-c'\left((n+1)^{1-a}-(i+1)^{1-a}\right)\right] \,.
\end{align*}
When $ i,j < n/2$,
\begin{align*}
	\exp\left[-c'\left((n+1)^{1-a}-(i+1)^{1-a}\right)\right] & \lesssim \exp\left[-c'\left( 1-2^{a-1}\right) n^{1-a}\right] \\
	& \lesssim n^{-2a} \log n
\end{align*}
for large $n$. When $i,j \geq n/2$, 
\begin{align*}
	\exp\left[-c'\left((n+1)^{1-a}-(i+1)^{1-a}\right)\right] & \leq \left(\frac{n}{2}\right)^{-2a} \\
	& \lesssim n^{-2a} \log n
\end{align*}
for large $n$. Therefore, we have shown
\begin{align*}
\left\Vert \nabla_j \nabla_i \hat\theta_n \right\Vert & \lesssim  c(L, L_2, a, \beta, \gamma)n^{-2a} \log n\,. \qedhere
\end{align*}
\end{proof}

\subsection{Proof of \Cref{prop:lossstab}}
\begin{proof}
Suppose $Y_i=\sum_{j=1}^\infty \beta_j Z_{ij} +\epsilon_i$ with ${\rm Var}(\epsilon_i)=\sigma_\epsilon^2$. Let $1<J_r<J_s$ be two integers and
$\hat f_r(Z_i)=\sum_{j=1}^{J_r}\hat\beta_j Z_{ij}$, and $\hat f_s(Z_i)=\sum_{j=1}^{J_s}\hat\beta_j Z_{ij}$, where
$$
\hat\beta_j=\frac{1}{n}\sum_{i=1}^n Y_i Z_i = \beta_j+\delta_j
$$
with
$$
\delta_j =\frac{1}{n}\sum_{i=1}^n Y_i Z_i-\beta_j = \frac{1}{n}\sum_{i=1}^n\left[\epsilon_iZ_i+f(Z_i)Z_i-\beta_j\right]\,,
$$
which satisfies
$\mathbb E\delta_j=0$ and ${\rm Var}(\delta_j)=\sigma_j^2/n$ for some $\sigma_j^2\in [\sigma_\epsilon^2,\sigma_\epsilon^2+\|f\|_\infty^2]$.

Consider the difference for loss functions,
\begin{align*}
T= &(Y_0-\hat f_r(Z_0))^2-(Y_0-\hat f_s(Z_0))^2\\
 =& 2\epsilon_0(\hat f_s(Z_0)-\hat f_r(Z_0)) +\left[2f(Z_0)-\hat f_r(Z_0)-\hat f_s(Z_0)\right]\left[\hat f_s(Z_0)-\hat f_r(Z_0)\right]\,.
\end{align*}

We are interested in the standard deviation of $T$. Let $\mathbf X=(Z_i,Y_i)_{i=1}^n$,
\begin{align*}
  {\rm Var}(T) = \mathbb E\left[{\rm Var}(T|\mathbf X, Z_0)\right] + {\rm Var}\left[\mathbb E(T|\mathbf X, Z_0)\right]\,.
\end{align*}

For the first term
\begin{align*}
  &\mathbb E\left[{\rm Var}(T|\mathbf X, Z_0)\right]\\
   =& 4\sigma_\epsilon^2\mathbb E\left[\hat f_s(Z_0)-\hat f_r(Z_0)\right]^2\\
 =&4\sigma_\epsilon^2\mathbb E\left[\sum_{j=J_r+1}^{J_s}\hat\beta_j Z_{0j}\right]^2\\
 =&4\sigma_\epsilon^2\mathbb E\sum_{j,k=J_r+1}^{J_s}\hat\beta_j Z_{0j}\hat\beta_k Z_{0k}\\
 =&4\sigma_\epsilon^2\mathbb E\sum_{j=J_r+1}^{J_s}\hat\beta_j^2 Z_{0j}^2\\
 \asymp&4\sigma_\epsilon^2\sum_{j=J_r+1}^{J_s}\left(\beta_j^2+\frac{\sigma_j^2}{n}\right)\\
 \ge & 4\sigma_\epsilon^2\left(\sum_{j=J_r+1}^{J_s} \beta_j^2+\frac{J_s-J_r}{n}\sigma_\epsilon^2\right)\,.
\end{align*}
If we regard $\sigma_\epsilon$ as a positive constant, then
$$
\sigma_{ss}^{(r)} \gtrsim \left(\sum_{j=J_r+1}^{J_s}\beta_j^2\right)^{1/2} + \frac{\sqrt{J_s-J_r}}{\sqrt{n}}\,.
$$

Now taking the $\nabla_i$ operator. By construction, $T'$ is the version of $T$ obtained by replacing $(Z_i,Y_i)$ by an iid copy $(Z_i',Y_i')$, keeping everything else unchanged. To derive a simple expression of $\nabla_i T = T'-T$, we first expand $T$
\begin{align*}
  T = & 2\epsilon_0\sum_{j=J_r+1}^{J_s}\hat\beta_j Z_{0j}\\
  &+\left[-2\sum_{j=1}^{J_r}\delta_j Z_{0j}+\sum_{j=J_r+1}^{J_s}(\beta_j-\delta_j) Z_{0j}+2\sum_{j=J_s+1}^\infty \beta_j Z_{0j}\right]\sum_{j=J_r+1}^{J_s}\hat\beta_j Z_{0j} \\
  \nabla_i T = & 2\epsilon_0\sum_{j=J_r+1}^{J_s}(\nabla_i \delta_j) Z_{0j}\\
 &+\left[-2\sum_{j=1}^{J_r}\nabla_i\delta_j Z_{0j}-\sum_{j=J_r+1}^{J_s}\nabla_i\delta_j Z_{0j}\right]\sum_{j=J_r+1}^{J_s}\hat\beta_j Z_{0j}\\
 &+\left[-2\sum_{j=1}^{J_r}\delta_j Z_{0j}+\sum_{j=J_r+1}^{J_s}(\beta_j-\delta_j) Z_{0j}+2\sum_{j=J_s+1}^\infty \beta_j Z_{0j}\right]\sum_{j=J_r+1}^{J_s}\nabla_i\delta_j Z_{0j}\\
 =& A_1+A_2+A_3\,,
\end{align*}
where $\nabla_i\delta_j=n^{-1}(Y_i' Z'_{ij}-Y_i Z_{ij})$. 

Since $\epsilon_0$ and $Z_{0j}$ are assumed sub-Weibull and $\mathbb E (Z_{0j} | Z_{01}, \dots,Z_{0,j-1})$ for all $j$, we can use \Cref{lem:MG-subW} to obtain that
$$ A_1 \text{ is } \left(\frac{\sqrt{J_s}}{n}\right)\text{-SW}$$
\begin{align*}
   A_2 \text{ is } \frac{\sqrt{J_s}}{n}\left(\|\beta_{J_r+1,J_s}\|+\frac{\sqrt{J_s}}{\sqrt{n}}\right)\text{-SW}
\end{align*}
\begin{align*}
   A_3 \text{ is } \left(\frac{\sqrt{J_s}}{\sqrt{n}}+\|\beta_{J_r+1,\infty}\|\right)\frac{\sqrt{J_s}}{n}\text{-SW}
\end{align*}
where
$$
\|\beta_{a,b}\|^2=\sum_{j=a}^b \beta_j^2\,.
$$

Since $ \beta_j\asymp j^{-\frac{1+a}{2}} $ for some $a>0$,
$$
\|\beta_{r,\infty}\| \asymp \left(\int_{J_r}^\infty x^{-1-a}dx\right)^{1/2}\asymp J_r^{-a/2}\,,
$$
and 
$$\|\beta_{J_r+1,J_s}\|\asymp J_r^{-a/2}$$
provided that $J_s\ge c J_r$ for some constant $c>1$. Therefore
$$
\sigma_{ss}^{(r)} \gtrsim J_r^{-a/2} + \frac{\sqrt{J_s}}{\sqrt{n}}\,,
$$
and first order stability condition reduces to
\begin{align*}
  \frac{\sqrt{J_s}}{n}+\frac{\sqrt{J_s}}{n}\left(\frac{\sqrt{J_s}}{\sqrt{n}}+J_r^{-a/2}\right)\ll \frac{1}{\sqrt{n}}\left(J_r^{-a/2}+\frac{\sqrt{J_s}}{\sqrt{n}}\right)
\end{align*}
which requires
\begin{align*}
  &J_s J_r^{a/2}\ll n.
\end{align*} 

Now for second order stability.  For $k\neq i$,
\begin{align*}
  \nabla_k(\nabla_i T) = & \left[-2\sum_{j=1}^{J_r}\nabla_i\delta_j Z_{0j}-\sum_{j=J_r+1}^{J_s}\nabla_i\delta_j Z_{0j}\right]\sum_{j=J_r+1}^{J_s}\nabla_k \delta_j Z_{0j}\\
 &+\left[-2\sum_{j=1}^{J_r}\nabla_k\delta_j Z_{0j}-\sum_{j=J_r+1}^{J_s}\nabla_k\delta_j Z_{0j}\right]\sum_{j=J_r+1}^{J_s}\nabla_i\delta_j Z_{0j}\\
 =& B_1+B_2\,.
\end{align*}
Again, using \Cref{lem:MG-subW}, then $B_1$ and $B_2$ are $\frac{J_s}{n^2}$-SW. Then the second order stability reduces to
$$
\frac{J_s}{n^2}\ll n^{-3/2}\left(J_r^{-a/2}+\frac{\sqrt{J_s}}{\sqrt{n}}\right)
$$
which is equivalent to
\begin{align*}
J_sJ_r^{a/2}\ll n^{1/2}\,.&\qedhere
\end{align*}
\end{proof}

\section{Proof for deterministic centering}
%!TEX root = ./hd_cv_clt.tex
\subsection{Preparation}
In preparation for the proof, we first take a closer look at some intermediate quantities involved in forming the asymptotic covariance term.  Intuitively, the variance contributed by $X_i$ in the term $\ell_r(X_j;\mathbf X_{-v_j})$ for $j\notin I_{v_i}$ is from the variability of $R_r(\mathbf X_{-v_j})$.  Based on this intuition, we can reduce \eqref{eq:Xi_in_Rcv} to
\begin{align}\label{eq:g}
	\ell_r(X_i;\mathbf X_{-v_i})+\sum_{j\notin I_{v_i}}R_r(\mathbf X_{-v_j})\eqqcolon g_{r,i}\,.
\end{align}
The quantities in \eqref{eq:g} still involve many sample points $X_j$ ($j\neq i$). In order to pinpoint the variance contributed by $X_i$ alone, we consider the following difference versions of $g_{r,i}$:
$$
\mathbb E (g_{r,i}|F_i)-\mathbb E(g_{r,i}|F_{i-1})\,,\text{ and } g_{r,i}-\mathbb E(g_{r,i}|\mathbf X^{-i})\,,
$$
where $F_i$ is the $\sigma$-field generated by $(X_1,...,X_i)$ and $\mathbf X^{-i}=(X_1,...,X_{i-1},X_{i+1},...,X_n)$.  The reason to consider these two differences is rather technical, where the former allows us to express $g_{r,i}$ as the sum of a sequence of martingale increments $\{\mathbb E (g_{r,i}|F_i)-\mathbb E(g_{r,i}|F_{i-1}):~i=1,...,n\}$, and the latter provides $\mathbb E(g_{r,i}|\mathbf X^{-i})$ as a leave-one-out approximation to $g_{r,i}$ with a manageable difference.

Let
\begin{equation}\label{eq:C_rsi}
C_{rs,i} = \left[\mathbb E (g_{r,i}|F_i)-\mathbb E(g_{r,i}|F_{i-1})\right]\left[ g_{s,i}-\mathbb E(g_{s,i}|\mathbf X^{-i})\right]\,.
\end{equation}
In \Cref{lem:E(C_rsi)-phi_rs} we will show that
\begin{equation}\label{eq:det_var_approx}
\mathbb E C_{rs,i} \approx \phi_{rs}\,.
\end{equation}
A key step in the proof is to ensure that this covariance is indeed contributed mostly by $X_i$, which amounts to controlling
$$
\mathbb E (C_{rs,i}|\mathbf X^{-i})-\mathbb E(C_{rs,i})\,.
$$
It can be shown that $\|\mathbb E (C_{rs,i}|\mathbf X^{-i})-\mathbb E(C_{rs,i})\|_2$ is small using the standard Efron-Stein inequality.  However, the high-dimensionality requires some uniform bound of the realized values $\mathbb E (C_{rs,i}|\mathbf X^{-i})-\mathbb E(C_{rs,i})$ over the triplet $(r,s,i)$. This is established using our sub-Weibull conditions in \Cref{lem:C_rsi}.
% \begin{assumption}\label{ass:C_rsi_SW}
% 	For all $(r,s,i)\in [p]^2\times[n]$ and $j\notin I_{v_i}$, $\frac{\mathbb E (C_{rs,i}|\mathbf X^{-i})-\mathbb E(C_{rs,i})}{\|\mathbb E (C_{rs,i}|\mathbf X^{-i})-\mathbb E(C_{rs,i})\|_2}$ and $\frac{\nabla_i R_{r}(\mathbf X_{-v_j})}{\|\nabla_i R_{r}(\mathbf X_{-v_j})\|_2}$ are sub-Weibull with constants independent of $(n,p)$.
% \end{assumption}
% \begin{remark} The first part of \Cref{ass:C_rsi_SW} essentially assumes that the covariance in \eqref{eq:det_var_approx} comes mainly from $X_i$ and the residual
% term does not exceed its $L_2$ norm by too much.  It can be viewed as the counterpart of \Cref{ass:var_stability} in the random centering and scaling case.  The second part of \Cref{ass:C_rsi_SW} is analogous to \Cref{ass:sw}.
% \end{remark}

\subsection{Proof of \Cref{thm:thm3}}
\begin{proof}[Proof of \Cref{thm:thm3}]
Throughout this proof, the notation may be different from that in the proof of \Cref{thm:1}, especially for $W$ and $Z$, due to the different centering and scaling. 

Let  $\mathbf Y=(Y_1,...,Y_p)=\sum_{i=1}^n n^{-1/2} \varepsilon_i$, where $\varepsilon_i\stackrel{iid}{\sim}N(0,\boldsymbol{\Phi})$.  Define $\boldsymbol{\varepsilon}^i$ as the vector $(\varepsilon_i:1\le i\le n)$ with the $i$th element $\varepsilon_i$ replaced by its iid copy $\varepsilon_i'$. Note that each $\varepsilon_i=(\varepsilon_{1,i},...,\varepsilon_{p,i})$ is itself a $p$-dimensional vector.  %Let $\mathbf Y^i$ be the corresponding vector with constructed from $\boldsymbol{\varepsilon}^i$.

Let $F_i$ be the $\sigma$-field generated by $\{(X_j):j\le i\}$, and for any function $f$ acting on $\mathbf X$, define
$$
\nabla_i f =  f(\mathbf X)-f(\mathbf X^i)
$$
and
\begin{equation}\label{eq:delta2nabla}
\Delta_i f = \mathbb E(f|F_i)-\mathbb E(f|F_{i-1})=\mathbb E(\nabla_i f|F_i)\,.
\end{equation}

Define quantities
\begin{align*}
	W_r  &= \frac{1}{\sqrt{n}} \sum_{i=1}^n \left\{ \ell_r(X_i, \mathbf X_{-v_i}) - R_r \right\}\,, \\
	W_r^i  &= \mathbb E(W_r | \mathbf X^{-i})\,.
%	g_{r,i}  & = \ell_r(X_i, \mathbf X_{-v_i}) + \sum_{j \notin I_{v_i}} R_r(\mathbf X_{-v_j})\,,
	\end{align*}
where $W_r$ is the deterministically centered quantity for which we would like to establish a Gaussian comparison, and $W_r^i$ is the corresponding leave-one-out version.

We then have the following useful facts
\begin{align}
	W_r-W_r^i=&W_r-\mathbb E(W_r(\mathbf X^i)|\mathbf X)=\mathbb E(\nabla_i W_r|\mathbf X)\,,\label{eq:nabla_W}
\end{align}
and
\begin{align}
	\nabla_i W_r%= & n^{-1/2} \left(J_{r,i}+D_{r,i}+\mathcal R_{r,i}\right)  \label{eq:nabla_W_JDR}\\
	 = & n^{-1/2} (\nabla_i g_{r,i} + D_{r,i}) \label{eq:nabla_W_gD}
\end{align}
with $D_{r,i}$, $g_{r,i}$ defined in \eqref{eq:D} and \eqref{eq:g}, respectively.
% \begin{align*}
% 	% \Delta_i Y_r &= \mathbb E(\nabla_i Y_r|F_i) = \frac{Y_{r,i}}{\sqrt{n}}  \\
% %	& W_i = \frac{1}{\sqrt{n}} \sum_{j \neq i} \left\{ \ell(X_j, \fvj(X^i)) - \E[ R(\fvj(X^i))] \right\} \\
% %	& Z_i(t) = \frac{1}{\sqrt{n}} \left( \sqrt{t} W_i + \sqrt{1 - t} Y_i \right) \\
% 	J_{r,i} & = \nabla_i \ell_r(X_i, \mathbf X_{-v_i})\,, \\
% 	\mathcal R_{r,i} & = \sum_{j \notin I_{v_i}} \nabla_i  R_r(\mathbf X_{-v_j})\,.
% 	% J_{r,i}(t) &= \sqrt{t} J_{r,i} / \sqrt{n} \\
% 	% R_{r,i}(t) &= \sqrt{t} \mathcal R_{r,i}  / \sqrt{n} \,,\\
% 	% Y_{r,i}(t) &= \sqrt{1-t} Y_{r,i} / \sqrt{n} \\
% 	% Z_r(t) - Z_r^i(t) & = J_{r,i}(t) + D_{r,i}(t) + \mathcal R_{r,i}(t) + Y_{r,i}(t) 
% \end{align*}
% By construction, $J_{r,i}$ is $O_P(1)$, $D_{r,i}$ is $O_P(\lstability)$ by \Cref{lem:D}, and $\mathcal R_{r,i}=O_P(\rstability)$ by \Cref{def:risk_stab}.

Similarly, for $t\in(0,1)$, consider interpolating variable
$$
Z_r(t) =\sqrt{t} W_r+\sqrt{1-t}Y_r\,,
$$
the leave-one-out version
$$
Z_r^i(t) = \sqrt{t} W_r^i +\sqrt{1-t} \sum_{j\neq i} \varepsilon_{r,j}/\sqrt{n}\,.
$$
and the martingale increment with respect to the filtration $\{F_i:1\le i\le n\}$,
$$
\Delta_i Z_r(t) = \sqrt{t}(\Delta_i W_r) + \sqrt{1-t}\varepsilon_{r,i}/\sqrt{n}\,.
$$

Consider $h:\mathbb R^p\mapsto \mathbb R$ with quantities $M_2$ and $M_3$ defined as in \eqref{eq:M2} and \eqref{eq:M3}.  In the following we will focus on controlling $\mathbb E\left[h(\mathbf W)-h(\mathbf Y)\right]$, where the bold font symbols represent the corresponding $p$-dimensional vectors: $\mathbf W=(W_1,...,W_p)$, $\mathbf Y=(Y_1,...,Y_p)$, and $\mathbf Z^i(t)=(Z_1^i(t),...,Z_p^i(t))$, etc. The only exception is $\mathbf X$, which corresponds to the collection of $n$ iid samples $(X_1,...,X_n)\in\mathcal X^n$.

Write $\frac{d \Delta_i Z_{r}(t)}{dt} = Z^\prime_{r,i}(t)$.
By Taylor expansion:
   \begin{align}
	\mathbb E\frac{dh(\mathbf Z(t))}{dt}& = \sum_{r=1}^p \sum_{i=1}^n \mathbb E[\partial_r h(\mathbf Z^i(t)) Z^\prime_{r,i}(t)] \nonumber\\
	&~~ + \sum_{s=1}^p \sum_{r=1}^p \sum_{i=1}^n \mathbb E[\partial_s \partial_r h(\mathbf Z^i(t)) \left( Z_s(t)-Z_s^i(t) \right) Z_{r,i}^\prime(t)]\nonumber \\
	& ~~ + \sum_{u=1}^p \sum_{s=1}^p \sum_{r=1}^p \sum_{i=1}^n \mathbb E\bigg\{ [Z_s(t)-Z_s^i(t)] [Z_u(t)-Z_u^i(t)]\nonumber\\ &~~~~\times\left[ \int_0^1 (1-v) \partial_u \partial_s \partial_r h(\mathbf Z^i(t) + v\mathbf Z_i(t)) dv \right] Z^\prime_{r,i}(t) \bigg\}.\label{eq:thm3_main_expansion}
\end{align}

The first term equals $0$ because $\mathbf Z^i(t)$ does not involve $(X_i,\varepsilon_i)$, and
$\mathbb E_{X_i,\varepsilon_i} Z_{r,i}'(t)=0$.

The second term can be written as
\begin{align}
	& \sum_{s=1}^p \sum_{r=1}^p \sum_{i=1}^n \mathbb E[\partial_s \partial_r h(\mathbf Z^i(t)) \left( Z_s(t)-Z_s^i(t) \right) Z_{r,i}^\prime(t) ] \nonumber\\
	 =& \frac{1}{2}\sum_{s=1}^p \sum_{r=1}^p \sum_{i=1}^n \mathbb E \left\{ \left(\frac{\Delta_i W_r}{\sqrt{t}} - \frac{\varepsilon_{r,i}}{\sqrt{n}\sqrt{1-t}} \right) \right.\nonumber\\
	 & \times \left. \left[ \sqrt{t}\left(W_{s} - W_{s}^i\right) + \sqrt{1-t}\frac{\varepsilon_{s,i}}{\sqrt{n}} \right] \partial_s \partial_r h(\mathbf Z^i(t))\right\} \nonumber\\
	% & ~~ = \sum_{s=1}^p \sum_{r=1}^p \sum_{i=1}^n \mathbb E \Big[ \left(\Delta_i W_{r,i}/\sqrt{t} - \Delta_i Y_{r,i}/\sqrt{1-t} \right) \\
	% & ~~~~ \times \left( \sqrt{t}\left(W_{s} - W_{s}^i\right) + \sqrt{1-t}\left(Y_{s} - Y_{s}^i\right)  \right) \partial_s \partial_r h(\mathbf Z^i(t)) \Big] \\
	=& \frac{1}{2} \sum_{s=1}^p \sum_{r=1}^p \sum_{i=1}^n \mathbb E \left\{ \left[ \Delta_i W_r \left( W_s - W_s^i \right) - \phi_{rs}/n \right] \partial_s \partial_r h(\mathbf Z^i(t)) \right\} \nonumber\\ 
	=& \frac{1}{2} \sum_{s=1}^p \sum_{r=1}^p \sum_{i=1}^n \mathbb E \left\{ \left[ \mathbb E(\nabla_i W_r | F_i) \mathbb E(\nabla_i W_s | \mathbf X) - \phi_{rs}/n \right] \partial_s \partial_r h(\mathbf Z^i(t)) \right\}\nonumber\\
	= & \frac{1}{2n} \sum_{s=1}^p \sum_{r=1}^p \sum_{i=1}^n \mathbb E \left\{ \left[ C_{rs,i} - \phi_{rs} + B_{rs,i} \right] \partial_s \partial_r h(\mathbf Z^i(t)) \right\}\,,\label{eq:thm3_q2}
	\end{align}
where $C_{rs,i}=\mathbb E(\nabla_i g_{r,i}|F_i)\mathbb E(\nabla_i g_{s,i}|\mathbf X)$ is the same as defined in \eqref{eq:C_rsi} and
\begin{align*}
B_{rs,i}=&\mathbb E(\nabla_i g_{r,i}|F_i)\mathbb E(D_{s,i}|\mathbf X)+\mathbb E(D_{r,i}|F_i)\mathbb E(\nabla_i g_{s,i}|\mathbf X)+\mathbb E(D_{r,i}|F_i)\mathbb E(D_{s,i}|\mathbf X)\,.
\end{align*}
In \eqref{eq:thm3_q2}, the first equation follows by construction of $Z_{r,i}'(t)$ and $Z_s^i$; the second equation follows by taking a conditional expectation over $\varepsilon_i$ and the definition of $\phi_{rs}$; the third and fourth equations follow from \eqref{eq:nabla_W} and \eqref{eq:nabla_W_gD}, respectively.

By \Cref{ass:lstability,ass:risk_stab,lem:D} we have $B_{rs,i}$ is $[(1+\rstability)\lstability]$-SW. Using the same argument as in \eqref{eq:SW_application_thm1q2}, we have 
\begin{align}\label{eq:thm3_q2_a}
	\frac{1}{2n}\mathbb E\sum_{rs,i}|B_{rs,i}\partial_s\partial_r h(\mathbf Z^i(t))|\le \tilde O\left[(1+\rstability)\lstability M_2 \right]\,.
\end{align}
Define $\bar C_{rs,i}=\mathbb E(C_{rs,i}|\mathbf X^{-i})$.
Now we are left with the term
\begin{align*}
&\frac{1}{2n} \sum_{s=1}^p \sum_{r=1}^p \sum_{i=1}^n \mathbb E \left\{ \left[ C_{rs,i} - \phi_{rs} \right] \partial_s \partial_r h(\mathbf Z^i(t)) \right\}\\
=&\frac{1}{2n} \sum_{s=1}^p \sum_{r=1}^p \sum_{i=1}^n \mathbb E \left\{ \left[ \bar C_{rs,i} - \phi_{rs} \right] \partial_s \partial_r h(\mathbf Z^i(t)) \right\}\\
=&\frac{1}{2n} \sum_{s=1}^p \sum_{r=1}^p \sum_{i=1}^n \mathbb E \left\{ \left[ \bar C_{rs,i}-\mathbb E C_{rs,i}+\mathbb E C_{rs,i} - \phi_{rs} \right] \partial_s \partial_r h(\mathbf Z^i(t)) \right\}
\end{align*}
where the equality holds by taking conditional expectation given $\mathbf X^{-i}$ and realizing $\mathbf Z^i(t)$ is independent of $X_i$.  By \Cref{lem:C_rsi}, $\bar C_{rs,i}-\mathbb E C_{rs,i}$ is $\lstability(1+\rstability)$-sub-Weibull. Repeating the truncation argument used in \eqref{eq:SW_application_thm1q2} we get
\begin{align}
	&\frac{1}{2n} \sum_{s=1}^p \sum_{r=1}^p \sum_{i=1}^n \mathbb E \left\{ \left[ \bar C_{rs,i}-\mathbb E C_{rs,i} \right] \partial_s \partial_r h(\mathbf Z^i(t)) \right\}\nonumber\\
	\le & \tilde O\left[\lstability(1+\rstability)M_2\right]\,.\label{eq:thm3_q2_b}
\end{align}
We still need to control $\mathbb E C_{rs,i}-\phi_{rs}$, which is provided by \Cref{lem:E(C_rsi)-phi_rs}. Thus we obtain
\begin{align}
	\frac{1}{2n} \sum_{s=1}^p \sum_{r=1}^p \sum_{i=1}^n \mathbb E \left\{ |\mathbb E C_{rs,i} - \phi_{rs}| \times |\partial_s \partial_r h(\mathbf Z^i(t))| \right\}\le c \lstability M_2\,,\label{eq:thm3_q2_c}
\end{align}
for some universal constant $c$.

Combining \eqref{eq:thm3_q2_a}, \eqref{eq:thm3_q2_b}, and \eqref{eq:thm3_q2_c} into \eqref{eq:thm3_q2} we conclude that the second term in \eqref{eq:thm3_main_expansion} is upper bounded in absolute value by (using the simplifying assumption $\lstability<1$.)
\begin{equation}\label{eq:thm3_q2_bound}
\tilde O\left[\lstability(\rstability+1)M_2\right]\,.
\end{equation}

The third term in \eqref{eq:thm3_main_expansion} can be controlled using the following equation
\begin{align}
Z_r(t)-Z_r^i(t) = & \frac{1}{\sqrt{n}}\mathbb E\left[g_{r,i}\sqrt{t}+D_{r,i}\sqrt{t}+\varepsilon_{r,i}\sqrt{1-t}\Big|\mathbf X\right]\,,\label{eq:nabla_Z(t)}
\end{align}
which holds by combining \eqref{eq:nabla_W} and \eqref{eq:nabla_W_gD}, and, by \Cref{ass:lstability,ass:risk_stab,lem:D}, is
 $n^{-1/2}(1+\rstability)$-sub-Weibull. 

Similarly, by \eqref{eq:delta2nabla} and \eqref{eq:nabla_W_gD} we have
$$
Z_{r,i}'(t)=\frac{1}{2\sqrt{n}}\left(\frac{\mathbb E(\nabla_i g_{r,i}+D_{r,i}|F_i)}{\sqrt{t}}-\frac{\varepsilon_{r,i}}{\sqrt{1-t}}\right)\,,
$$
and hence
$$
\frac{Z_{r,i}'(t)}{n^{-1/2}\eta_t(1+\rstability)}
$$
is sub-Weibull, where $\eta_t$ is defined in \eqref{eq:eta_t}.

Putting together the sub-Weibull properties of $Z_r(t)-Z_r^i(t)$ and $Z_{r,i}'(t)$, we have
$$
\frac{[Z_s(t)-Z_s^i(t)][Z_u(t)-Z_u^i(t)]Z_{r,i}'(t)}{n^{-3/2}\eta_t(1+\rstability)^3}
$$
is sub-Weibull.
Therefore, applying the truncation argument in \eqref{eq:SW_application_thm1q2} again in the third term of \eqref{eq:thm3_main_expansion}, we obtain an upper bound of
\begin{align}
	\tilde O\left\{n^{-1/2}\eta_t(1+\rstability)^3 M_3\right\}\,.\label{eq:thm3_q3_bound}
\end{align}
Combining \eqref{eq:thm3_q2_bound} and \eqref{eq:thm3_q3_bound} with \eqref{eq:thm3_main_expansion} and integrate the latter over $t\in(0,1)$ we obtain
\begin{align}
	|\mathbb E[h(\mathbf W)-h(\mathbf Y)]|\le \tilde O\left[\lstability(1+\rstability)M_2+n^{-1/2}(1+\rstability)^3 M_3\right]\,.\label{eq:thm3_h_bound}
\end{align}

Again, using \Cref{lem:h_function} and the anti-concentration result \citep[][Lemma 2.1]{chernozhukov2013gaussian}, we have for any $\beta>0$
\begin{align}
& \sup_{z}\left|\mathbb P\left(\max_r W_r \le z\right)-\mathbb P(\max_r Y_r\le z)\right|\\
& \le \tilde O\left[\lstability(1+\rstability)\beta^2+n^{-1/2}(1+\rstability)^3 \beta^3 + \beta^{-1}\right]\nonumber\\
& \le \tilde O \left([\lstability(1+\rstability)]^{1/3}\vee n^{-1/8}(1+\rstability)^{3/4}\right) \,\label{eq:thm3_max_comp1}
\end{align}
where the last inequality follows by choosing $\beta=\min([\lstability(1+\rstability)]^{-1/3}, n^{1/8}(1+\rstability)^{-3/4})$.

\end{proof}

\subsection{Proof of variance estimation with deterministic centering (\Cref{thm:marginal_var})}
% Recall that $$\hat R_{{\rm cv},r}=\hat R_{{\rm cv},r}(\mathbf X)=\frac{1}{n}\sum_{i=1}^n \ell(X_i;\mathbf X_{-v_i})\,.$$  

The claimed result in \Cref{thm:marginal_var} follows directly from the two following lemmas.
\begin{lemma}\label{thm:asymp_var_nabla_i}
Under \Cref{ass:condition_l,ass:lstability,ass:risk_stab,ass:phi_bound},	
	$$
\sup_{1\le r,s\le p}\left|\frac{n^2}{2}\mathbb E \left(\nabla_i \hat R_{{\rm cv},r}\nabla_i\hat R_{{\rm cv},s}\right) - \phi_{rs}\right|=O\left(\lstability(1+\rstability)\right)\,.
	$$
\end{lemma}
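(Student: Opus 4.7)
I begin with the decomposition $n\nabla_i\hat R_{{\rm cv},r}=\nabla_i g_{r,i}+D_{r,i}$ (established in the proof of \Cref{thm:thm3} using \eqref{eq:g} and \eqref{eq:D}) and expand
\[
\tfrac{n^2}{2}\mathbb E[\nabla_i\hat R_{{\rm cv},r}\nabla_i\hat R_{{\rm cv},s}]=\tfrac12\mathbb E[\nabla_i g_{r,i}\nabla_i g_{s,i}]+\tfrac12\mathbb E[\nabla_i g_{r,i}D_{s,i}+D_{r,i}\nabla_i g_{s,i}+D_{r,i}D_{s,i}].
\]
The three cross terms involving $D$ are controlled by sub-Weibull moment bounds: \Cref{lem:D} gives that $D_{r,i}$ is $\lstability$-SW, while combining \Cref{ass:lstability}(c) (making $\nabla_i\ell_r(X_i;\mathbf X_{-v_i})$ an $O(1)$-SW term) with \Cref{ass:risk_stab} applied to the at most $V-1$ distinct folds contributing to $\sum_{j\notin I_{v_i}}\nabla_iR_r(\mathbf X_{-v_j})$ shows $\nabla_i g_{r,i}$ is $(1+\rstability)$-SW. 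Cauchy--Schwarz then bounds each cross term by $\tilde O(\lstability(1+\rstability))$.

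For the main term, the key identity is
\[
\mathbb E[\nabla_i g_{r,i}\mid X_i,X_i']=h_r(X_i)-h_r(X_i')\eqqcolon A_r,\qquad h_r(x)\coloneqq\bar\ell_r(x)+\tilde n\,\bar R_r(x),
\]
obtained by integrating out $\mathbf X_{-v_i}$ in the loss term and $\mathbf X_{-v_j}\setminus\{X_i\}$ in each of the $\tilde n$ risk summands. Since $X_i,X_i'$ are iid, $\mathbb E[A_rA_s]=2\,\mathrm{Cov}(h_r(X_1),h_s(X_1))=2\phi_{rs}$ by the definition \eqref{eq:phi_rs}. Writing $B_r\coloneqq\nabla_i g_{r,i}-A_r$, which satisfies $\mathbb E[B_r\mid X_i,X_i']=0$, the cross terms $\mathbb E[A_rB_s]$ and $\mathbb E[B_rA_s]$ vanish and the main term becomes $2\phi_{rs}+\mathbb E[B_rB_s]$.

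The final step is to bound $|\mathbb E[B_rB_s]|$. I split $B_r=B_r^{\ell}+B_r^R$, where $B_r^{\ell}=[\ell_r(X_i;\mathbf X_{-v_i})-\bar\ell_r(X_i)]-[\ell_r(X_i';\mathbf X_{-v_i})-\bar\ell_r(X_i')]$ is the loss-fluctuation piece and $B_r^R=\tfrac{n}{V}\sum_{v\ne v_i}\xi_v^{(r)}$ with $\xi_v^{(r)}=\nabla_iR_r(\mathbf X_{-v})-(\bar R_r(X_i)-\bar R_r(X_i'))$ is the risk-sum fluctuation. An Efron--Stein estimate on the $\tilde n$ training coordinates of $\mathbf X_{-v_i}$, using the first-order loss stability \Cref{ass:lstability}(a), gives $\|B_r^{\ell}\|_2=\tilde O(\lstability)$. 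Each $\xi_v^{(r)}$ is mean zero given $(X_i,X_i')$ and of $L^2$ norm at most $\|\nabla_iR_r(\mathbf X_{-v})\|_2=\tilde O(n^{-1}\rstability)$ by \Cref{ass:risk_stab}, so the triangle inequality over the $V-1$ folds gives $\|B_r^R\|_2=\tilde O(\rstability)$. Conditional Cauchy--Schwarz then delivers the claimed $\tilde O(\lstability(1+\rstability))$ bound. The main obstacle is the residual $\rstability^2$ contribution that a naive Cauchy--Schwarz applied to $\mathbb E[B_r^RB_s^R]$ would leave behind; absorbing it into $\tilde O(\lstability(1+\rstability))$ exploits the working regime of the paper in which $\rstability$ is dominated by $\lstability$ (see the remark following \Cref{ass:risk_stab}), so that $\rstability^2\lesssim\lstability(1+\rstability)$.
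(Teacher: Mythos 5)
Your overall architecture matches the paper's: the same decomposition $n\nabla_i\hat R_{{\rm cv},r}=\nabla_i g_{r,i}+D_{r,i}$, the same Cauchy--Schwarz treatment of the terms involving $D$, and the same identification of the leading term $2\phi_{rs}$ by conditioning on $(X_i,X_i')$ (the paper organizes this as separate computations for the $\ell$-$\ell$, risk-risk, and cross pieces, but the content is identical to your $A_r$/$B_r$ split). The genuine gap is in your final step. The bound $\|B_r^R\|_2=O(\rstability)$, obtained by the triangle inequality with the crude estimate $\|\xi_v^{(r)}\|_2\le\|\nabla_i R_r(\mathbf X_{-v})\|_2\lesssim n^{-1}\rstability$, leaves an $\rstability^2$ term in $\mathbb E[B_r^R B_s^R]$, and your proposed absorption $\rstability^2\lesssim\lstability(1+\rstability)$ is equivalent to assuming $\rstability\lesssim\lstability$. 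That is not a hypothesis of the lemma, and the paper explicitly allows $\rstability$ to be a non-vanishing constant (see the discussion after \Cref{ass:risk_stab}: only the final error bounds need $\rstability$ to be dominated, and $\rstability=O(1)$ is the intended regime). With, say, $\rstability\asymp 1$ and $\lstability\asymp n^{-1/2}$, your argument yields an $O(1)$ error where the lemma claims $O(n^{-1/2})$.

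The fix --- and what the paper actually does --- is to control the fluctuation of the risk sum around its conditional mean given $(X_i,X_i')$ not by first-order risk stability but by a martingale/Efron--Stein argument in the remaining coordinates $X_j$, $j\ne i$. The increments are $\nabla_j\nabla_i\mathcal R_{r,i}=\sum_{k\notin I_{v_i}}\nabla_j\nabla_i R_r(\mathbf X_{-v_k})$, and since $\nabla_j\nabla_i R_r(\mathbf X_{-v})=\mathbb E\left[\nabla_j\nabla_i\ell_r(X_0;\mathbf X_{-v})\,\middle|\,\mathbf X_{-v}\right]$, Jensen's inequality together with \Cref{ass:lstability}(b) gives $\|\nabla_j\nabla_i R_r\|_2\lesssim n^{-3/2}\lstability$, hence $\|\nabla_j\nabla_i\mathcal R_{r,i}\|_2\lesssim n^{-1/2}\lstability$, and summing the squared increments over the $\approx n$ coordinates yields $\|B_r^R\|_2^2\lesssim\lstability^2$. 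This replaces your $O(\rstability)$ with $O(\lstability)$, so that $|\mathbb E[B_rB_s]|\lesssim\lstability^2\le\lstability(1+\rstability)$ with no extra assumption. The rest of your proposal (the identity $\mathbb E[\nabla_i g_{r,i}\mid X_i,X_i']=h_r(X_i)-h_r(X_i')$, the computation $\mathbb E[A_rA_s]=2\phi_{rs}$, the Efron--Stein bound on $B_r^\ell$, and the sub-Weibull moment bounds on the $D$ cross-terms) is sound and agrees with the paper.
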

\Cref{thm:asymp_var_nabla_i} shows that the variance of $\nabla_i \hat{\mathbf R}_{{\rm cv}}$ is entry-wise close to the true covariance matrix $\Phi$.  The next result further reduces this variance term to the proportion contributed solely by $X_i$.

\begin{lemma}\label{thm:concentration_nabla_i_f}
	Under \Cref{ass:condition_l,ass:lstability,ass:risk_stab,ass:phi_bound},	
	$$n^2\sup_{1\le r,s\le p}\left|\mathbb E\left[\nabla_i \hat R_{{\rm cv},r}\nabla_i\hat R_{{\rm cv},s}|\mathbf X_{-v_i}\right]-
\mathbb E\left[\nabla_i \hat R_{{\rm cv},r}\nabla_i\hat R_{{\rm cv},s}\right]
	\right|\lesssim \tilde O\left[\lstability(1+\rstability)\right]\,.$$
\end{lemma}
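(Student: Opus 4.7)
The goal is to show that the random function $G_{rs}(\mathbf X_{-v_i})\coloneqq n^{2}\,\mathbb E[\nabla_i \hat R_{{\rm cv},r}\,\nabla_i \hat R_{{\rm cv},s}\mid \mathbf X_{-v_i}]$ concentrates around its mean $\mathbb E G_{rs}$ uniformly in $(r,s)$. The natural tool is the sub-Weibull martingale concentration inequality \Cref{lem:MG-subW} applied along an ordering of the $\tilde n$ indices in $[n]\setminus I_{v_i}$, followed by a union bound over the $p^{2}$ pairs that contributes only a $\log^{c}(n+p)$ factor absorbed into $\tilde O(\cdot)$. The substantive work is therefore to pin down the sub-Weibull scale of a single martingale increment.

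First, from \eqref{eq:nabla_W_gD} and the identity $W_r=\sqrt n(\hat R_{{\rm cv},r}-R_r^\ast)$ we have $\nabla_i \hat R_{{\rm cv},r}=n^{-1}(\nabla_i g_{r,i}+D_{r,i})$, so
$$n^{2}\,\nabla_i \hat R_{{\rm cv},r}\,\nabla_i \hat R_{{\rm cv},s}=(\nabla_i g_{r,i}+D_{r,i})(\nabla_i g_{s,i}+D_{s,i}),$$
and $G_{rs}$ is obtained by averaging the right-hand side over $\{X_j:j\in I_{v_i}\}$ together with the iid copy $X_i'$. Order $[n]\setminus I_{v_i}$ arbitrarily, let $F_k^{(-v_i)}$ be the $\sigma$-field generated by the first $k$ of these indices, and decompose $G_{rs}-\mathbb EG_{rs}=\sum_{k\notin I_{v_i}}\mathbb E[\widetilde\nabla_k G_{rs}\mid F_k^{(-v_i)}]$, where $\widetilde\nabla_k$ replaces $X_k$ by a fresh iid copy $X_k''$ kept distinct from $X_i'$, using the same device as in the proof of \Cref{lem:D}. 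It then suffices to show that $\widetilde\nabla_k G_{rs}$ is $n^{-1/2}\lstability(1+\rstability)$-SW.

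The product rule for $\widetilde\nabla_k$ on the two-factor product generates four cross terms, each of the form (unperturbed factor) $\times$ (second-order difference). The unperturbed factors $\nabla_i g_{r,i}+D_{r,i}$ are $\lstability(1+\rstability)$-SW, since $D_{r,i}$ is $\lstability$-SW by \Cref{lem:D}, while $\nabla_i g_{r,i}$ splits into the $O(1)$ evaluation-point change $\ell_r(X_i;\mathbf X_{-v_i})-\ell_r(X_i';\mathbf X_{-v_i})$ (which contributes only through its centred fluctuations) and the martingale-structured sum $\sum_{j\notin I_{v_i}}\nabla_i R_r(\mathbf X_{-v_j})$, bounded at scale $\rstability$ via \Cref{ass:risk_stab} together with \Cref{lem:MG-subW}. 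The differenced factors $\widetilde\nabla_k\nabla_i g_{r,i}$ and $\widetilde\nabla_k D_{r,i}$ unfold into mixed second-order terms $\widetilde\nabla_k\nabla_i\ell_r$, $\widetilde\nabla_k\nabla_i R_r$, and $\widetilde\nabla_k\nabla_i K_{r,j}$; the individual pointwise contributions are $n^{-3/2}\lstability$-SW by \Cref{ass:lstability}(b) and the analogous risk bound from \Cref{ass:risk_stab}, while the sum over $j$ triggers a second martingale argument as in \Cref{lem:D} that costs a factor of $\sqrt n$. Carrying through the bookkeeping gives the $n^{-1/2}\lstability(1+\rstability)$ scale per increment, and conditional expectation preserves sub-Weibull norms, so \Cref{lem:MG-subW} at the outer level yields $G_{rs}-\mathbb EG_{rs}$ being $\lstability(1+\rstability)$-SW; a union bound over the $p^{2}$ pairs then gives the claimed uniform bound.

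The main obstacle is the mixed evaluation/fitting second-order differences inside $\widetilde\nabla_k\nabla_i\ell_r(X_i;\mathbf X_{-v_i})$: here $X_i$ plays the role of the evaluation point while $X_k$ is a fitting coordinate, which does not match \Cref{ass:lstability}(b) verbatim (that statement controls two fitting coordinates with the evaluation point fixed) and requires a hybrid argument combining \Cref{ass:lstability}(a) applied in the $X_k$ direction with a Lipschitz-type bound along the evaluation coordinate. A secondary challenge is that $D_{r,i}$ is itself a martingale sum, so applying $\widetilde\nabla_k$ induces a nested martingale structure; separating the iid copies $X_i'$ and $X_k''$ is essential so that these decompositions do not interact, again paralleling the device used in the proof of \Cref{lem:D}.
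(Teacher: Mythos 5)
Your overall strategy is the paper's: write $n\nabla_i\hat R_{{\rm cv},r}=\nabla_i g_{r,i}+D_{r,i}$, run a Doob/Efron--Stein martingale decomposition of the conditional expectation over the $\tilde n$ fitting indices, bound each increment through the product rule using first- and second-order stability, and invoke \Cref{lem:MG-subW} plus a union bound over the $p^2$ pairs. The one organizational difference is that the paper only pushes the martingale argument through the term $\nabla_i g_{r,i}\nabla_i g_{s,i}$; the three cross terms involving $D_{r,i}$ or $D_{s,i}$ are already $\lstability(1+\rstability)$-SW in raw magnitude (since $D_{r,i}$ is $\lstability$-SW by \Cref{lem:D} and $\nabla_i g_{s,i}$ is $(1+\rstability)$-SW), so their centred conditional expectations are disposed of in one line via $\|W-\mathbb E W\|_q\le 2\|W\|_q$, with no differencing of $D_{r,i}$ at all. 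Your version, which differences the whole product, also works, but it is precisely what creates the ``nested martingale'' complication you flag at the end; you can sidestep it entirely.

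Two corrections to your bookkeeping. First, the unperturbed factor $\nabla_i g_{r,i}+D_{r,i}$ is \emph{not} $\lstability(1+\rstability)$-SW: as you yourself observe, $\nabla_i g_{r,i}$ contains the evaluation-point swap $\ell_r(X_i;\mathbf X_{-v_i})-\ell_r(X_i';\mathbf X_{-v_i})$, which is only $O(1)$-SW under \Cref{ass:lstability}(c), so the correct scale is $(1+\rstability)$. Your stated per-increment scale $n^{-1/2}\lstability(1+\rstability)$ implicitly uses this correct value (your written claim would instead give $n^{-1/2}\lstability^2(1+\rstability)$), so the conclusion survives, but the intermediate statement is inconsistent with it. Second, your ``main obstacle'' is not an obstacle and needs no Lipschitz-type bound in the evaluation coordinate --- no such assumption is available in the paper. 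Since $i\in I_{v_i}$, the operator $\nabla_i$ only swaps the evaluation point, so
$$
\widetilde\nabla_k\nabla_i\ell_r(X_i;\mathbf X_{-v_i})=\widetilde\nabla_k\ell_r(X_i;\mathbf X_{-v_i})-\widetilde\nabla_k\ell_r(X_i';\mathbf X_{-v_i})
$$
is a difference of two first-order fitting-coordinate differences, each $n^{-1/2}\lstability$-SW by \Cref{ass:lstability}(a) (the evaluation point being $X_i$ or $X_i'$ makes no distributional difference), hence $n^{-1/2}\lstability$-SW by closure under addition. That is exactly the scale the increment bound requires, and it is how the paper's \eqref{eq:nabla_ji_g} handles this term.
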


\begin{proof}[Proof of \Cref{thm:asymp_var_nabla_i}]
	Let $f_r=n\hat R_{{\rm cv},r}=\sum_{i=1}^n \ell_r(X_i;\mathbf X_{-v_i})$.
	For $-1\le j\le n$, $j\neq i$, define 
	\begin{align*}
		E_{j,i} = \left\{\begin{array}{ll}
			F_0\,, & j=-1\\
			\sigma(X_i,X_i')\,, & j=0\\
			\sigma(X_1,...,X_j,X_i,X_i')\,, & 1\le j\le n\,.
		\end{array}\right.
	\end{align*}
	and
	\begin{align*}
		E_{j,i}^-=\left\{\begin{array}{ll}
			E_{j-1,i}\,, & j\neq i+1\,,\\
			E_{i-1,i}\,, & j= i+1\,.
		\end{array}\right.
	\end{align*}
Then $\{E_{j,i}:-1\le j\le n,~j\neq i\}$ is  a filtration.

Use notation
$\ell_{r,i}=\ell_r(X_i;\mathbf X_{-v_i})$, $\bar\ell_{r,i}=\bar\ell_r(X_i)$, $\ell_{r,i}'=\ell_r(X_i';\mathbf X_{-v_i})$, $\bar\ell_{r,i}'=\bar\ell_r(X_i')$, and $\bar R_{r,i}=\bar R_r(X_i)$.

Using the decomposition
$$
\nabla_i f_r = \nabla_i K_{r,i} + \nabla_i \mathcal R_{r,i} + D_{r,i}
$$
we get
\begin{align}
\nabla_i f_r\nabla_i f_s = &\nabla_i K_{r,i} \nabla_i K_{s,i} +\nabla_i \mathcal R_{r,i}\nabla_i \mathcal R_{s,i}+\nabla_i K_{r,i}\nabla_i \mathcal R_{s,i}+\nabla_i \mathcal R_{r,i}\nabla_i K_{s,i}\nonumber\\
&+D_{r,i}\nabla_i K_{s,i}+D_{s,i}\nabla_i K_{r,i}+D_{r,i}\nabla_i \mathcal R_{s,i}+D_{s,i}\nabla_i\mathcal R_{r,i}+D_{r,i}D_{s,i}\,.\label{eq:thm3_var_decomp}
\end{align}

For the first term
\begin{align*}
	&\mathbb E \nabla_i K_{r,i} \nabla_i K_{s,i}\\ = & \mathbb E \nabla_i \ell_{r,i}\nabla_i\ell_{s,i}\\
 =& {\rm Cov}\left\{\mathbb E\left[\nabla_i \ell_{r,i}|X_i,X_i'\right],~\mathbb E\left[\nabla_i \ell_{s,i}|X_i,X_i'\right]\right\}\\
 & + \mathbb E\left\{ {\rm Cov}\left[\nabla_i \ell_{r,i},~\nabla_i \ell_{s,i}|X_i,X_i'\right] \right\}\\
 = & 2 {\rm Cov}(\bar\ell_{r,i},\bar\ell_{s,i})+\mathbb E\left\{ {\rm Cov}\left[\nabla_i \ell_{r,i},~\nabla_i \ell_{s,i}|X_i,X_i'\right] \right\}
\end{align*}
where the second term is upper bounded by
\begin{align*}
	&\left|\mathbb E\left\{ {\rm Cov}\left[\nabla_i \ell_{r,i},~\nabla_i \ell_{s,i}|X_i,X_i'\right] \right\}\right|\\
	\le &\frac{1}{2}\mathbb E\left\{{\rm Var}\left[\nabla_i\ell_{r,i}|X_i,X_i'\right]+{\rm Var}\left[\nabla_i\ell_{s,i}|X_i,X_i'\right]\right\}\\
	\le & 2 \mathbb E\left\{{\rm Var}(\ell_{r,i}|X_i)+{\rm Var}(\ell_{s,i}|x_i)\right\}\\
	\le & \sum_{j\neq I_{v_i}}\mathbb E(\nabla_j \ell_{r,i})^2+\mathbb E(\nabla_j \ell_{s,i})^2\\
	\le & 2\lstability^2
\end{align*}
where the second last step used Efron-Stein inequality.

So we conclude
\begin{equation}\label{eq:thm3_var_term1}
	\left|\mathbb E \nabla_i K_{r,i}\nabla_i K_{s,i} - 2 {\rm Cov}(\bar\ell_{r,1},\bar\ell_{s,1})\right|\le 2\lstability^2
\end{equation}

For the second term in \eqref{eq:thm3_var_decomp},
using the martingale decomposition
\begin{align*}
	\nabla_i \mathcal R_{r,i}=\sum_{0\le j\le n,j\neq i} \mathbb E(\nabla_i \mathcal R_{r,i}|E_{j,i})-\mathbb E(\nabla_i\mathcal R_{r,i}|E_{j,i}^-)
\end{align*}
 we have, by orthogonality between martingale increments,
\begin{align*}
&\mathbb E\nabla_i \mathcal R_{r,i} \nabla_i\mathcal R_{s,i}\\
 = & \mathbb E\sum_{0\le j\le n,j\neq i}\left[\mathbb E(\nabla_i\mathcal R_{r,i}|E_{j,i})-\mathbb E(\nabla_i\mathcal R_{r,i}|E_{j,i}^-)\right]\left[\mathbb E(\nabla_i\mathcal R_{s,i}|E_{j,i})-\mathbb E(\nabla_i\mathcal R_{s,i}|E_{j,i}^-)\right]\\
 = & 2\tilde n^2 {\rm Cov}(\bar R_r(X_1),\bar R_s(X_1))+\mathbb E\sum_{1\le j\le n,j\neq i}\left[\mathbb E(\nabla_j\nabla_i\mathcal R_{r,i}|E_{j,i})\mathbb E(\nabla_j\nabla_i\mathcal R_{s,i}|E_{j,i})\right]
\end{align*}
and the remainder term satisfies
\begin{align*}
	&\left|\mathbb E\sum_{1\le j\le n,j\neq i}\left[\mathbb E(\nabla_j\nabla_i\mathcal R_{r,i}|E_{j,i})\mathbb E(\nabla_j\nabla_i\mathcal R_{s,i}|E_{j,i})\right]\right|\\
	\le &\frac{1}{2}\sum_{1\le j\le n, j\neq i} \left[\|\nabla_j\nabla_i \mathcal R_{r,i}\|_2^2+\|\nabla_j\nabla_i \mathcal R_{s,i}\|_2^2\right]\\
	\le & \lstability^2
\end{align*}
Hence we have
\begin{equation}\label{eq:thm3:var_term2}
	\left|\mathbb E\nabla_i \mathcal R_{r,i} \nabla_i\mathcal R_{s,i}-2\tilde n^2 {\rm Cov}(\bar R_{r,1},\bar R_{s,1})\right|\le \lstability^2\,.
\end{equation}

For the third term in \eqref{eq:thm3_var_decomp},
using the martingale decomposition of $\nabla_i K_{r,i}$ and $\nabla_i \mathcal R_{s,i}$, we have
\begin{align*}
 &	\mathbb E \nabla_i K_{r,i}\nabla_i \mathcal R_{s,i}\\
=&  \mathbb E \left[\mathbb E(\nabla_i K_{r,i}|X_i,X_i') \mathbb E(\nabla_i\mathcal R_{s,i}|X_i,X_i')\right]
+\sum_{1\le j\le n,j\neq i}\mathbb E\left[\mathbb E(\nabla_j\nabla_i K_{r,i})\mathbb E(\nabla_j\nabla_i \mathcal R_{s,i})\right]\\
=&2\tilde n {\rm Cov}(\bar\ell_{r,1},\bar R_{s,1})+\sum_{1\le j\le n,j\neq i}\mathbb E\left[\mathbb E(\nabla_j\nabla_i K_{r,i})\mathbb E(\nabla_j\nabla_i \mathcal R_{s,i})\right]
\end{align*}
where the remainder term satisfies
\begin{align*}
	&\left|\sum_{1\le j\le n,j\neq i}\mathbb E\left[\mathbb E(\nabla_j\nabla_i K_{r,i})\mathbb E(\nabla_j\nabla_i \mathcal R_{s,i})\right]\right|\\
	\le & \sum_{1\le j\le n,j\neq i}\|\nabla_j\nabla_i K_{r,i}\|_2\|\nabla_j\nabla_i\mathcal R_{s,i}\|_2\\
	\le & \lstability^2\,.
\end{align*}
The fourth term can be bounded similarly. So we have
\begin{align}
\left|\nabla_i K_{r,i}\nabla_i \mathcal R_{s,i}+\nabla_i \mathcal R_{r,i}\nabla_i K_{s,i}-2\tilde n {\rm Cov}(\bar\ell_{r,1}\bar R_{s,1})-2\tilde n{\rm Cov}(\bar R_{r,1},\bar\ell_{s,1})\right|\le 2\lstability^2\,.
\end{align}
For the other terms in \eqref{eq:thm3_var_decomp}, according to \Cref{lem:D}, \Cref{ass:lstability}, and \Cref{ass:risk_stab}, we have $\|D_{r,i}\|_2\lesssim \lstability$, $\|\nabla_i K_{r,i}\|_2\lesssim 1$, and $\nabla_i \mathcal R_{r,i}\le \rstability$, so that
the last five terms in \eqref{eq:thm3_var_decomp} are bounded by, up to constant factor,
$\lstability(1+\rstability)$.  The claimed result is proved.
\end{proof}

\begin{proof}[Proof of \Cref{thm:concentration_nabla_i_f}]
We follow the notation in the proof of \Cref{thm:asymp_var_nabla_i}.	By the symmetry assumption of $\ell$, we can assume $i>\tilde n$ without loss of generality.

	Let $M=\nabla_i f_r\nabla_i f_s-\phi_{rs}$, and $M_j=\mathbb E(M|F_{j})-\mathbb E(M|F_{j-1})=\mathbb E(\nabla_j M|F_j)$ for $j=1,...,\tilde n$.  The main task in the proof is to control $\|M_j\|_{\psi_\alpha}$, which further reduces to controlling the norm of $\nabla_j(\nabla_i f_r\nabla_i f_s)$.

	To begin with, we first write
	$$
\nabla_j(\nabla_i f_r\nabla_i f_s) = \nabla_j\nabla_i f_r \nabla_i f_s +\nabla_i f_s(\mathbf X^j)\nabla_j\nabla_i f_r\,.
	$$

Use the decomposition $\nabla_i f_r= \nabla_i g_{r,i}+D_{r,i}$
we have
\begin{align*}
&\left\|\mathbb E\left(\nabla_i f_r\nabla_i f_s|\mathbf X_{-v_i}\right)-\mathbb E (\nabla_i f_r\nabla_i f_s)\right\|\\
\le&\left\|\mathbb E\left[\nabla_i g_{r,i}\nabla_i g_{s,i}|\mathbf X_{-v_i}\right]-\mathbb E(\nabla_i g_{r,i}\nabla_i g_{s,i})\right\|\\
&+\left\|\mathbb E\left[\nabla_i g_{r,i}D_{s,i}|\mathbf X_{-v_i}\right]-\mathbb E(\nabla_i g_{r,i}D_{s,i})\right\|\\
&+\left\|\mathbb E\left[D_{r,i}\nabla_i g_{s,i}|\mathbf X_{-v_i}\right]-\mathbb E(D_{r,i}\nabla_i g_{s,i})\right\|\\
&+\left\|\mathbb E\left[D_{r,i}D_{s,i}|\mathbf X_{-v_i}\right]-\mathbb E(D_{r,i}D_{s,i})\right\|\,.
\end{align*}
Using the fact that for any random variable $W$
$$
\|W-\mathbb E W\|_q\le 2\|W\|_q\,,~~\forall~~q\ge 1\,,
$$
by assumption $D_{r,i}$ is $\lstability$-SW, $\nabla_i g_{r,i}$ is $(1+\rstability)$-SW, so the sum of the last three terms in the above expression is
$\lstability(1+\rstability)$-SW.

Now for the first term,
Let $M=\mathbb E\left[\nabla_i g_{r,i}\nabla_i g_{s,i}|\mathbf X_{-v_i}\right]-\mathbb E(\nabla_i g_{r,i}\nabla_i g_{s,i})$ and $M_j=\mathbb E(M|F_j)-\mathbb E(M|F_{j-1})=\mathbb E(\nabla_j M|F_j)$ for $1\le j\le \tilde n$. The main remaining task in the proof is to control the tail of $M_j$.

For any $\ell_q$ norm $\|\cdot\|$ with $q\ge 1$,
\begin{align*}
	&\|M_j\|=\|\mathbb E(\nabla_j M|F_j)\|\le \|\nabla_j M\|\\
	=&\left\|\nabla_j\left[\mathbb E(\nabla_i g_{r,i}\nabla_i g_{s,i})|\mathbf X_{-v_i}\right]\right\|\\
	\le & \|\nabla_j(\nabla_ig_{s,i}\nabla_ig_{r,i})\|\\
	= & \left\|\nabla_j\nabla_j g_{s,i}\nabla_i g_{s,i}+\nabla_i g_{s,i}(\mathbf X^j)\nabla_j\nabla_ig_{s,i}\right\|\\
	\le & 2n^{-1/2}\lstability(1+\rstability)\,.
\end{align*}
Then using \Cref{lem:MG-subW} we conclude $M$ is $\lstability(1+\rstability)$-SW\,.
\end{proof}

\subsection{Auxiliary lemmas}
\begin{lemma}[Bounding $\bar C_{rs,i}-\mathbb E C_{rs,i}$]\label{lem:C_rsi}
Under the conditions in \Cref{thm:thm3},
	$\bar C_{rs,i}-\mathbb E C_{rs,i}$ is $\lstability(1+\rstability)$-SW.
\end{lemma}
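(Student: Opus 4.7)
The plan is to expand $\bar C_{rs,i} - \mathbb{E} C_{rs,i}$ as a martingale sum over $\{X_k : k \neq i\}$ and control each increment using the product structure $C_{rs,i} = A \cdot B$. By the symmetry of $\ell_r$ in its fitting arguments and the freedom to relabel folds, we may assume throughout that $i = n$ and $v_n = V$. Letting $\mathcal G_k = \sigma(X_j : j\le k,~j\neq n)$, the quantity $\bar C_{rs,n}$ is $\mathcal G_n$-measurable (and $\mathbb{E}\bar C_{rs,n} = \mathbb{E} C_{rs,n}$ by the tower property), so
\[
\bar C_{rs,n} - \mathbb{E} C_{rs,n} = \sum_{k\neq n} \mathbb{E}(\tilde\nabla_k \bar C_{rs,n} \mid \tilde{\mathcal G}_k)\,,
\]
where $\tilde\nabla_k$ replaces $X_k$ by a fresh iid copy $X_k''$ independent of the $X_n'$ already present in $C_{rs,n}$, and $\tilde{\mathcal G}_k = \sigma(\mathcal G_k, X_k'')$. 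Since $\tilde\nabla_k \bar C_{rs,n} = \mathbb{E}_{X_n}[\tilde\nabla_k C_{rs,n}]$, conditional Jensen dominates each increment in $L_q$ by $\|\tilde\nabla_k C_{rs,n}\|_q$, and \Cref{lem:MG-subW} will supply the $\sqrt n$ aggregation.

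Using the tower property with $F_i = \sigma(X_1,\dots,X_i)$ at $i=n$ yields $A := \mathbb{E}(\nabla_n g_{r,n}|F_n) = g_{r,n} - \mathbb{E}_{X_n}g_{r,n}$, and likewise $B = g_{s,n} - \mathbb{E}_{X_n}g_{s,n}$. Decompose $A = A_K + A_R$ with $A_K = K_{r,n}$ and $A_R = \sum_{v\neq V}(n/V)\,\mathbb{E}_{X_n'}\nabla_n R_r(\mathbf X_{-v})$. By \Cref{ass:lstability}(c), $A_K$ is $1$-SW; by \Cref{ass:risk_stab} each summand of $A_R$ is $V^{-1}\epsilon_R$-SW, so \Cref{pro:basic_SW} gives $A$ (and similarly $B$) is $(1+\epsilon_R)$-SW. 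The core step is showing $\tilde\nabla_k A$ is $O(n^{-1/2}\epsilon_\ell(1+\epsilon_R))$-SW. For $\tilde\nabla_k A_K$, this reduces to $\tilde\nabla_k \ell_r(X_n;\mathbf X_{-V}) - \tilde\nabla_k R_r(\mathbf X_{-V})$, controlled by \Cref{ass:lstability}(a) and \Cref{ass:risk_stab} to give $O(n^{-1/2}\epsilon_\ell + n^{-1}\epsilon_R) \le O(n^{-1/2}\epsilon_\ell(1+\epsilon_R))$ using $\epsilon_\ell \ge n^{-1/2}$. For $\tilde\nabla_k A_R$, the integration-by-parts identity $\tilde\nabla_k \mathbb{E}_{X_n'}\nabla_n R_r(\mathbf X_{-v}) = \mathbb{E}_{X_n'}\tilde\nabla_k\nabla_n R_r(\mathbf X_{-v})$ together with $\nabla_n R_r(\mathbf X_{-v}) = \mathbb{E}[\nabla_n\ell_r(X_0;\mathbf X_{-v})\mid\mathbf X_{-v}, X_n']$ upgrade this to the genuine second-order loss difference $\tilde\nabla_k\nabla_n \ell_r$, which by \Cref{ass:lstability}(b) is $n^{-3/2}\epsilon_\ell$-SW; multiplying by $n/V$ and summing over the (at most) $V-1$ folds where the term is nonzero yields $O(n^{-1/2}\epsilon_\ell)$-SW. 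The same bound applies to $\tilde\nabla_k B$.

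Assembling via the product rule $\tilde\nabla_k(AB) = A\,\tilde\nabla_k B + (\tilde\nabla_k A)\,B^k$ and \Cref{pro:basic_SW}(1) yields $\|\tilde\nabla_k C_{rs,n}\|_q \lesssim n^{-1/2}\epsilon_\ell(1+\epsilon_R)\,q^{1/\alpha}$ uniformly in $k\neq n$, and \Cref{lem:MG-subW} applied to the resulting $n$-term martingale delivers the claim. The main obstacle is precisely the integration-by-parts upgrade for $\tilde\nabla_k A_R$: the naive bound using only \Cref{ass:risk_stab} yields $\tilde\nabla_k A_R = O(\epsilon_R)$-SW per increment and a fatal $\sqrt n\,\epsilon_R$ aggregate scale; circumventing this requires exploiting that $A_R$ is already centered in $X_n$, so that the further $\tilde\nabla_k$ produces a mixed second-order difference that can be lifted, via the representation of $R_r$ as a conditional expectation of $\ell_r$, to the second-order loss stability in \Cref{ass:lstability}(b).
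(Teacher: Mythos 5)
Your proof is correct and takes essentially the same route as the paper's: a martingale decomposition in the coordinates $k\neq i$, conditional Jensen to pass the difference operator inside the conditional expectations, the product rule on $C_{rs,i}=A\cdot B$, the lifting of the mixed second-order risk difference to the second-order loss difference via $R_r=\mathbb E[\ell_r(X_0;\mathbf X_{-v})\mid\mathbf X_{-v}]$ so that \Cref{ass:lstability}(b) applies, and finally \Cref{lem:MG-subW} for the $\sqrt n$ aggregation. One bookkeeping point: with your intermediate bound that $\tilde\nabla_k A$ is $n^{-1/2}\lstability(1+\rstability)$-SW and $B$ is $(1+\rstability)$-SW, the product rule only yields $n^{-1/2}\lstability(1+\rstability)^2$ per increment, hence $\lstability(1+\rstability)^2$-SW overall; to recover the stated $\lstability(1+\rstability)$ scale you should bound $\tilde\nabla_k R_r(\mathbf X_{-V})$ by Jensen from \Cref{ass:lstability}(a) (giving $n^{-1/2}\lstability$-SW) rather than from \Cref{ass:risk_stab}, which makes $\tilde\nabla_k A$ itself $n^{-1/2}\lstability$-SW.
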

\begin{proof}[Proof of \Cref{lem:C_rsi}]
% 	By Efron-Stein inequality
% \begin{align}
% 	\mathbb E\left[\bar C_{rs,i}-\mathbb E C_{rs,i}\right]^2\le & 2 \sum_{j\neq i}\|\nabla_j \bar C_{rs,i}\|_2^2
% 	\label{eq:efron-stein_C_rsi}
% \end{align}

For $j\neq i$, and function $f$ acting on $\mathbf X$, let $\mathbf X^{j,-i}$ be the vector obtained by replacing $X_j$ in $\mathbf X^{-i}$ with its iid copy $X_j'$.
Then by Jensen's inequality, we have, for $q\ge 1$
\begin{align}
\|\nabla_j \mathbb E(f|\mathbf X^{-i})\|_q^q=&\mathbb E\left\{\mathbb E(f|\mathbf X^{-i})-\mathbb E \left[f(\mathbf X^{j,-i})|\mathbf X^{j,-i}\right]\right\}^q\nonumber\\
=&\mathbb E\left\{\mathbb E\left[f(\mathbf X)-f(\mathbf X^{j})|\mathbf X^{-i},X_j'\right]\right\}^q\nonumber\\
\le & \|\nabla_j f\|_q^q\,.\label{eq:monotone_nabla_conditional}
\end{align}
Take $f$ to be $C_{rs,i}$, we have for $j\neq i$.
$$
\|\nabla_j \bar C_{rs,i}\|_q\le \|\nabla_j C_{rs,i}\|_q\,.
$$
Next we control $\|\nabla_j C_{rs,i}\|_q$. By definition,
\begin{align}
	\nabla_j C_{rs,i}=&\nabla_j\left[\mathbb E(\nabla_i g_{r,i}|F_i)\mathbb E(\nabla_i g_{s,i}|\mathbf X)\right]\nonumber\\
=&\left[\nabla_j\mathbb E(\nabla_i g_{r,i} | F_i) \right] \mathbb E(\nabla_i g_{s,i} | \mathbf X) + \left[\nabla_j\mathbb E(\nabla_i g_{s,i} | \mathbf X) \right] \left\{\mathbb E\left[\nabla_i g_{r,i}(\mathbf X^j) | F_i(\mathbf X^j)\right]\right\}\,.\label{eq:nabla_C_expand}
\end{align}
% Now using Cauchy-Schwartz and H\"{o}lder in \eqref{eq:nabla_C_expand},
% \begin{align}
% 	\|\nabla_j C_{rs,i}\|_2^2\lesssim \|\nabla_j\nabla_i g_{r,i}\|_4^2\|\nabla_i g_{s,i}\|_4^2+\|\nabla_j\nabla_i g_{s,i}\|_4^2\|\nabla_i g_{r,i}\|_4^2\,.\label{eq:nabla_C_bound_1}
% \end{align}
By \Cref{pro:basic_SW} and definition of $\lstability$ and $\rstability$,
\begin{align}
	\nabla_j\nabla_i g_{r,i} =\nabla_j\nabla_i \ell_r(X_i;\mathbf X_{-v_i})+\sum_{k\notin I_{v_i}}\nabla_j\nabla_i R(\mathbf X_{-v_k}) ~~\text{is}~~ n^{-1/2}\lstability\text{-SW}\,,\label{eq:nabla_ji_g}
\end{align}
and
\begin{equation}\label{eq:nabla_i_gi_sw}
\nabla_i g_{r,i}=\nabla_i \ell_r(X_i;\mathbf X_{-v_i})+\sum_{k\notin I_{v_i}}\nabla_i R(\mathbf X_{-v_k}) ~~\text{is}~~ (1+\rstability)\text{-SW}\,.
\end{equation}
Combining \eqref{eq:nabla_ji_g} and \eqref{eq:nabla_i_gi_sw} with \eqref{eq:nabla_C_expand}, and applying \Cref{pro:basic_SW} we conclude that $\nabla_j C_{rs,i}$ is $n^{-1/2}\lstability(1+\rstability)$-SW.
For the same reason as \eqref{eq:monotone_nabla_conditional}, we have $\nabla_j \bar C_{rs,i}$ is $n^{-1/2}\lstability(1+\rstability)$-SW.
Then the desired result follows from applying \Cref{lem:MG-subW} to the martingale sequence obtained by taking conditional expectation of $\bar C_{rs,i}$ with respect to the filtration $(F_{j,i}:1\le 0\le n,~j\neq i)$, where $F_{j,-i}$ is the sigma field generated by $(X_k:1\le k\le j,~k\neq i)$.
\end{proof}

\begin{lemma}[Bounding $\mathbb E C_{rs,i}-\phi_{rs}$]\label{lem:E(C_rsi)-phi_rs}
There exists a universal constant $c>0$ such that for all $(r,s,i)\in[p]^2\times [n]$, $|\mathbb E C_{rs,i}-\phi_{rs}|\le c \lstability$.
\end{lemma}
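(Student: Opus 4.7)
The plan is to rewrite $\mathbb{E} C_{rs,i}$ as an expected product of martingale increments, identify a function of $X_i$ alone whose covariance structure equals $\phi_{rs}$, and bound the resulting error via first-order loss and risk stability.

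By the tower property, $\mathbb E(\nabla_i g_{r,i}|F_i) = \mathbb E(g_{r,i}|F_i)-\mathbb E(g_{r,i}|F_{i-1}) =: \Delta_i g_{r,i}$, the $i$th martingale increment in the natural filtration $F_i=\sigma(X_1,\dots,X_i)$, and a further tower step (conditioning $\mathbb E(\nabla_i g_{s,i}|\mathbf X)$ down to $F_i$) gives $\mathbb E C_{rs,i}=\mathbb E[\Delta_i g_{r,i}\,\Delta_i g_{s,i}]$. I would then introduce the target
\[
h_r := [\bar\ell_r(X_i)-R_r^*] + \tilde n\,[\bar R_r(X_i)-R_r^*],
\]
which is a function of $X_i$ alone, so $\Delta_i h_r=h_r$; moreover $\mathbb E[h_r h_s]=\phi_{rs}$ by \eqref{eq:phi_rs}. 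Writing $e_r:=\Delta_i g_{r,i}-h_r$, Cauchy--Schwarz yields
\[
|\mathbb E C_{rs,i}-\phi_{rs}| \le 2\|h_r\|_2\|e_s\|_2 + \|e_r\|_2\|e_s\|_2,
\]
and since $\|h_r\|_2=\sqrt{\phi_{rr}}\le\sqrt{\bar\phi}$ by \Cref{ass:phi_bound}, the task reduces to $\|e_r\|_2\lesssim\lstability$.

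To bound $\|e_r\|_2$, I would split by the two parts of $g_{r,i} = \ell_r(X_i;\mathbf X_{-v_i}) + \sum_{j\notin I_{v_i}}R_r(\mathbf X_{-v_j})$ and use the martingale Jensen inequality $\|\Delta_i Y\|_2\le\|\nabla_i Y\|_2$. The loss contribution $\|\Delta_i[\ell_r-\bar\ell_r(X_i)]\|_2$ is bounded by $\|\nabla_i\ell_r\|_2+\|\nabla_i\bar\ell_r\|_2\lesssim\lstability/\sqrt n$ via \Cref{ass:lstability}(a), where we use Jensen to transfer the loss stability to $\bar\ell_r=\mathbb E[\ell_r|X_i]$. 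For the risk contribution, regrouping $\sum_{j\notin I_{v_i}}R_r(\mathbf X_{-v_j})=(n/V)\sum_{v'\neq v_i}R_r(\mathbf X_{-v'})$ and $\tilde n\bar R_r(X_i)=(n/V)\sum_{v'\neq v_i}\bar R_r(X_i)$ turns the piece into $(n/V)\sum_{v'\neq v_i}\Delta_i[R_r(\mathbf X_{-v'})-\bar R_r(X_i)]$; \Cref{ass:risk_stab} gives $\|\nabla_i R_r(\mathbf X_{-v'})\|_2\lesssim\rstability/n$ and the same Jensen argument yields $\|\nabla_i\bar R_r(X_i)\|_2\lesssim\rstability/n$, so the $(n/V)$ prefactor times $V-1$ terms produces an $O(\rstability)$ bound. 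Combining, $\|e_r\|_2\lesssim\lstability+\rstability$, which collapses to $O(\lstability)$ once $\rstability$ is treated as a bounded constant absorbed into the universal constant $c$ (consistent with the $(1+\rstability)$ factors appearing in the proof of \Cref{thm:thm3}).

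The main obstacle is the risk-piece bound. A naive triangle inequality based on the Efron--Stein estimate $\|R_r(\mathbf X_{-v'})-\bar R_r(X_i)\|_2=O(\rstability/\sqrt n)$ applied to all $\tilde n$ summands in $\sum_{j\notin I_{v_i}}$ would produce $O(\sqrt n\,\rstability)$, which is useless even when $\rstability$ is bounded. The fix combines two ingredients: first, grouping the $\tilde n$ summands by their $V-1$ folds collapses the sum to only $V-1$ distinct terms; second, using the martingale Jensen inequality with the sharper single-sample perturbation rate $\|\nabla_i R_r\|_2\lesssim\rstability/n$ from \Cref{ass:risk_stab}, rather than the coarser centered bound $\|R_r-R_r^*\|_2=O(\rstability/\sqrt n)$, gives a per-fold error of order $\rstability/n$, so that the overall $(n/V)(V-1)$ multiplicity scales correctly to $O(\rstability)$ instead of $O(\sqrt n\,\rstability)$.
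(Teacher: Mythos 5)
Your reduction is sound up to the point where everything hinges on $\|e_r\|_2=\|\mathbb E(\nabla_i g_{r,i}|F_i)-h_r\|_2\lesssim\lstability$: the identity $\mathbb E C_{rs,i}=\mathbb E[\Delta_i g_{r,i}\,\Delta_i g_{s,i}]$ and the identification $\mathbb E[h_rh_s]=\phi_{rs}$ both match the paper. But your bound on $\|e_r\|_2$ has two genuine gaps. First, for the loss piece you invoke \Cref{ass:lstability}(a) to claim $\|\nabla_i\ell_r(X_i;\mathbf X_{-v_i})\|_2\lesssim\lstability/\sqrt n$. In that term $X_i$ is the \emph{evaluation} point, not one of the $\tilde n$ fitting points, so the first-order loss stability does not apply: replacing the evaluation point changes the loss by $O(1)$, and a single application of $\|\Delta_i Y\|_2\le\|\nabla_i Y\|_2$ gives only an $O(1)$ bound. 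The quantity $\mathbb E(\nabla_i\ell_r(X_i;\mathbf X_{-v_i})|F_i)-(\bar\ell_r(X_i)-R_r^*)$ is in fact only $O(\lstability)$, not $O(\lstability/\sqrt n)$, and proving even that requires a further martingale decomposition over the fitting coordinates: the paper introduces the filtration $H_{-1}\subset H_0=\sigma(X_i)\subset H_j=\sigma(X_1,\dots,X_j,X_i)$, so that $h_r$ is exactly the $j=0$ increment and $e_r=\sum_{j\ge1}\mathbb E(\nabla_j\nabla_i g_{r,i}\,|\,H_j)$ is a sum of $i-1$ \emph{orthogonal} increments, each of $L_2$-norm $\lesssim n^{-1/2}\lstability$ via the second-order stability \Cref{ass:lstability}(b); Pythagoras then gives $\|e_r\|_2\lesssim\lstability$. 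Your proposal never uses the second-order condition, which is the assumption this lemma actually lives on.

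Second, your risk piece lands on $O(\rstability)$, and your closing step "absorb $\rstability$ into the universal constant" does not give the stated bound: $\lstability+\rstability\le c\lstability$ would require $\rstability\lesssim\lstability$, whereas the paper only assumes $\rstability$ is bounded (Section 6 explicitly says it need not vanish), so your error term is $O(1)$, not $O(\lstability)$. The triangle inequality $\|\Delta_i R_r(\mathbf X_{-v'})\|_2+\|\bar R_r(X_i)-R_r^*\|_2\lesssim\rstability/n$ per fold throws away the cancellation between $\mathbb E(\nabla_iR_r(\mathbf X_{-v'})|F_i)$ and its further projection $\mathbb E(\nabla_iR_r(\mathbf X_{-v'})|X_i)=\bar R_r(X_i)-R_r^*$; their difference is again a sum of orthogonal increments $\mathbb E(\nabla_j\nabla_iR_r|H_j)$, each $\lesssim n^{-3/2}\lstability$ by Jensen and \Cref{ass:lstability}(b), totalling $n^{-1}\lstability$ per fold and $O(\lstability)$ after the $\tilde n$-fold multiplicity — with no $\rstability$ at all. (A side benefit of the orthogonal decomposition is that the cross term $\mathbb E[h_re_s]$ vanishes exactly, so the paper's final error is $\lstability^2$ rather than your $\lstability$ from Cauchy--Schwarz; either suffices for the statement, but only if $\|e_r\|_2\lesssim\lstability$ is actually established.)
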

\begin{proof}[Proof of \Cref{lem:E(C_rsi)-phi_rs}]
Since $F_i$ is a sub $\sigma$-field of $\mathbf X$, and $\nabla_j g_{r,i}$ is centered, we have
	\begin{align}\label{eq:E(C_rsi)_expand_1}
		\mathbb E C_{rs,i} = \mathbb E\left[\mathbb E(\nabla_i g_{r,i}|F_i)\mathbb E(\nabla_i g_{s,i}|F_i)\right] 
	\end{align}
	For $j\in\{1,...,i-1\}$, let $H_{j,i}$ be the $\sigma$-field generated by $(X_1,...,X_j,X_i)$.  Define $H_{0,i}$ as the $\sigma$-field generated by $X_i$, and $H_{-1,i}$ be the trivial $\sigma$-field.
	Then we can write the martingale decomposition of $\mathbb E(\nabla_i g_{r,i}|F_i)$ as follows
	\begin{align*}
		\mathbb E (\nabla_i g_{r,i}|F_i) = \sum_{j=0}^{i-1} \mathbb E(\nabla_i g_{r,i}|H_j)-\mathbb E(\nabla_i g_{r,i}|H_{j-1})\,.
	\end{align*}
	Apply this decomposition to both $\mathbb E (\nabla_i g_{r,i}|F_i)$ and $\mathbb E (\nabla_i g_{s,i}|F_i)$ in \eqref{eq:E(C_rsi)_expand_1}, we get
	\begin{align}
		\mathbb E C_{rs,i} =& \mathbb E\sum_{j,k=0}^{i-1}\left[\mathbb E(\nabla_i g_{r,i}|H_j)-\mathbb E(\nabla_i g_{r,i}|H_{j-1})\right]\left[\mathbb E(\nabla_i g_{s,i}|H_k)-\mathbb E(\nabla_i g_{s,i}|H_{k-1})\right]\nonumber\\
		=&\mathbb E\sum_{j=0}^{i-1}\left[\mathbb E(\nabla_i g_{r,i}|H_j)-\mathbb E(\nabla_i g_{r,i}|H_{j-1})\right]\left[\mathbb E(\nabla_i g_{s,i}|H_j)-\mathbb E(\nabla_i g_{s,i}|H_{j-1})\right]\,,\label{eq:E(C_rsi)_expand_2}
	\end{align}
where the second equality holds because $H_0\subset H_1\subset H_2\subset\cdots\subset H_{i-1}$ is a filtration.

	When $j=0$, 
	\begin{align}
&\mathbb E\left\{\left[\mathbb E(\nabla_i g_{r,i}|H_j)-\mathbb E(\nabla_i g_{r,i}|H_{j-1})\right]\left[\mathbb E(\nabla_i g_{s,i}|H_j)-\mathbb E(\nabla_i g_{s,i}|H_{j-1})\right] \right\}\nonumber\\
=&\mathbb E\left\{ \left[\mathbb E (g_{r,i}|X_i) - \mathbb E g_{r,i}\right]
\left[\mathbb E (g_{s,i}|X_i) - \mathbb E g_{s,i}\right]\right\}\nonumber\\
=& \phi_{rs}\,.	\label{eq:E(C_rsi)_main}\end{align}
When $j>0$, we have, using Jensen's inequality and \eqref{eq:nabla_ji_g},
\begin{align}
	&\|\mathbb E(\nabla_i g_{r,i}|H_j)-\mathbb E(\nabla_i g_{r,i}|H_{j-1})\|_2=\|\mathbb E(\nabla_j\nabla_i g_{r,i}|H_j)\|_2\nonumber\\
	\le & \|\nabla_j\nabla_i g_{r,i}\|_2\lesssim n^{-1/2}\lstability\,.\label{eq:E(C_rsi)_remainder}
\end{align}
The claim follows by combining \eqref{eq:E(C_rsi)_main}, \eqref{eq:E(C_rsi)_remainder} with \eqref{eq:E(C_rsi)_expand_2}.
\end{proof}

\bibliographystyle{plainnat}
\bibliography{ref}
\end{document}